\tikzset{
  %LabelStyle/.style = { rectangle, rounded corners, draw,
                        %minimum width = 2em, fill = yellow!50,
                       % text = red, font = \bfseries },
  VertexStyle/.append style = {shape=circle,draw, fill=black, minimum size=9pt, inner sep=-1pt},
                                %font = \Large\bfseries},
  EdgeStyle/.append style = {-, thick},
  LoopStyle/.append style = {-}}
\tikzset{>={Latex[width=2.5mm,length=2.5mm]}}
\newenvironment{speciaalenumerate}{
\begin{enumerate}[(i)]
  \setlength{\itemsep}{1pt}
  \setlength{\parskip}{0pt}
  \setlength{\parsep}{0pt}
}{\end{enumerate}}
\newcommand{\N}{\mathbb{N}}
\newcommand{\Z}{\mathbb{Z}}
\newcommand{\C}{\mathbb{C}}
\newcommand{\R}{\mathbb{R}}
\newcommand{\FF}{\mathcal{F}}
\newcommand{\K}{\mathcal{K}}
\newcommand{\T}{\mathcal{T}}
\newcommand{\x}{\mathbf{x}}
\newcommand{\y}{\mathbf{y}}
\newcommand{\FFF}{\mathfrak{f}}
\newcommand{\EEE}{\mathfrak{e}}
\newcommand{\BBB}{\mathfrak{b}}
\newcommand{\matchsupheight}{_{\vphantom{0}}}
\theoremstyle{plain}
\newtheorem{theorem}{Theorem}[section]
\newtheorem{lemma}[theorem]{Lemma}
\newtheorem{proposition}[theorem]{Proposition}
\newtheorem{corollary}[theorem]{Corollary}
\newtheorem{observation}[theorem]{Observation}
\theoremstyle{definition}
\newtheorem{example}[theorem]{Example}
\newtheorem{definition}[theorem]{Definition}
\theoremstyle{remark}
\newtheorem{remark}[theorem]{Remark}
\title{A Tutte polynomial for maps II: the non-orientable case\footnote{A preliminary version of this paper has been presented at EUROCOMB 2017~\cite{goodall2017tutte}.}}
\author{Andrew Goodall\thanks{Charles University, Prague, Czech Republic. Email: \texttt{andrew@iuuk.mff.cuni.cz}. Supported by Project ERCCZ LL1201 Cores and Czech Science Foundation GA \v{C}R 16-19910S.} \and Bart Litjens\thanks{University of Amsterdam, Netherlands. Email: \texttt{bart\_litjens@hotmail.com}. Supported by the European Research Council under the European Union's Seventh Framework Programme (FP7/2007-2013) / ERC grant agreement n$\mbox{}^{\circ}$ 339109.} \and Guus Regts\thanks{University of Amsterdam,  Netherlands. Email: \texttt{guusregts@gmail.com}. Supported by a NWO Veni grant.}\and Llu\'is Vena\thanks{University of Amsterdam, Netherlands. Email: \texttt{lluis.vena@gmail.com}. Supported by the European Research Council under the European Union's Seventh Framework Programme (FP7/2007-2013) / ERC grant agreement n$\mbox{}^{\circ}$ 339109.}}
\begin{document}
\maketitle
\begin{abstract}
We construct a new polynomial invariant of maps (graphs embedded in a compact surface, orientable or non-orientable), which contains as specializations the Krushkal polynomial, the Bollob\'as--Riordan polynomial, the Las Vergnas polynomial, and their extensions to non-orientable surfaces, and hence in particular the Tutte polynomial. Other evaluations include the number of local flows and local tensions taking non-identity values in a given finite group.
\end{abstract}

\section{Introduction}
In~\cite{tutte54} Tutte defined the \emph{dichromate} of a graph~$\Gamma$ 
as a bivariate polynomial graph invariant that includes the chromatic polynomial of~$\Gamma$ 
and the flow polynomial of~$\Gamma$ 
as univariate specializations. The latter two polynomials can be (and usually are) defined by their evaluations at positive integers.  
Let $\Z_n$ denote the additive cyclic group of order~$n$ and suppose we fix an arbitrary orientation of the edges of $\Gamma$.  A nowhere-zero $\Z_n$-flow of $\Gamma$  is an assignment of non-zero elements of $\Z_n$ to the edges of $\Gamma$ such that Kirchhoff's law is satisfied at each vertex. (It then follows that for any edge cutset the sum of the values on edges in one direction is equal to the sum of values on edges in the other direction. It is also evident that the number of nowhere-zero $\Z_n$-flows is an invariant of the graph $\Gamma$, as this number does not depend on the choice of orientation of edges of $\Gamma$.)
For each positive integer~$n$, the flow polynomial of $\Gamma$ evaluated at~$n$ is equal to the number of nowhere-zero $\Z_n$-flows of~$\Gamma$.
The chromatic polynomial evaluated at $n$ counts the number of proper $n$-colourings of~$\Gamma$. A proper colouring of~$\Gamma$ induces a nowhere-zero $\Z_n$-tension of~$\Gamma$, which is to say an assignment of non-zero elements of~$\Z_n$ to edges of~$\Gamma$ such that, for each closed walk, the sum of the values on forward edges equals the sum of the values on backward edges.  Upon fixing the colour of a vertex in each connected component of $\Gamma$ there is a one-to-one correspondence between proper $n$-colourings and nowhere-zero $\mathbb Z_n$-tensions of~$\Gamma$. 

The dichromate was to become better known as the Tutte polynomial and not only contains as evaluations many other important graph invariants, 
but also extends its domain from graphs to matroids, and has revealed fruitful connections between graphs and many other combinatorial structures, such as the Potts model of statistical physics and, more topologically, knots. Another natural way to extend the domain of graphs is to \emph{maps}, that is, graphs embedded in a compact surface (an {\em orientable map} if the surface is orientable, and a {\em non-orientable map} otherwise).  {\em Local flows} and {\em local tensions} of a map are defined similarly to flows and tensions of a graph, and coincide for a plane map with flows and tensions of the underlying planar graph. 
Furthermore, values in a local flow may be taken from a nonabelian group, as the cyclic ordering of edges around vertices of a map determines in which order to multiply elements together when verifying that Kirchhoff's law holds. (We adopt the convention that in nonabelian groups composition is multiplication, while in abelian groups composition is addition and the identity is zero.) Just as the flow polynomial of a graph evaluated at a positive integer $n$ is equal to the number of nowhere-zero $\Z_n$-flows, so for each finite group $G$ we have a map invariant equal to the number of nowhere-identity local $G$-flows. Local tensions are defined dually (just facial walks rather than all closed walks being involved in the definition: the correspondence of tensions with proper vertex colorings is not preserved, but see~\cite{litjens17}). The question is then whether there is a polynomial map invariant which contains as evaluations the number of nowhere-identity local $G$-flows and the number of nowhere-identity local $G$-tensions, in a similar way to how the Tutte polynomial of a graph contains the flow polynomial and the chromatic polynomial as specializations.

Various extensions of the Tutte polynomial to maps have been defined, notably by Las Vergnas~\cite{vergnas80}, Bollob\'as and Riordan~\cite{bollobas01, bollobas02}, and Krushkal~\cite{krushkal11, butler12}, each of which have properties analogous to those of the Tutte polynomial such as having a deletion-contraction recurrence formula or extending from graphs to matroids (the relevant extension from maps being to $\Delta$-matroids~\cite{CMNR16}). 
However, none of these extensions of the Tutte polynomial contain for every finite group $G$ the number of nowhere-identity local $G$-flows and the number of nowhere-identity local $G$-tensions as evaluations. 
Recently such an extension of the Tutte polynomial to orientable maps,
 called the {\em surface Tutte polynomial}, was discovered by three of the authors together with Krajewski~\cite{goodall16}. The surface Tutte polynomial of an orientable map includes the Krushkal polynomial of an orientable map, and hence the Las Vergnas polynomial and Bollob\'as--Riordan polynomial of an orientable map, as specializations.
 
 In the present paper we extend the domain of the surface Tutte polynomial of~\cite{goodall16} to include non-orientable maps and show that this map invariant contains for every finite group $G$ the number of nowhere-identity local $G$-flows and the number of nowhere-identity local $G$-tensions as evaluations, as well as containing further specializations such as the number of quasi-trees of given genus. 
In this way the surface Tutte polynomial of a map is the analogue of the dichromate of a graph as defined by Tutte to include the number of nowhere-zero $\mathbb Z_n$-flows and  the number of nowhere-zero $\mathbb Z_n$-tensions as evaluations. 
Furthermore, the Kruskhal polynomial of a non-orientable map (as defined by Butler~\cite{butler12}) remains a specialization of the surface Tutte polynomial extended to~maps.
While extending the surface Tutte polynomial of~\cite{goodall16} from orientable maps to non-orientable maps suggested itself as a natural step to take, %
and the theorems we prove are generalizations of the theorems in~\cite{goodall16}, in each case the added complications of non-orientability necessitated a substantial development of technique in order to achieve the required lifting of a theorem about orientable maps to a theorem about maps. (A like remark could be made for Bollob\'as and Riordan's extension of their polynomial from orientable maps~\cite{bollobas01} to include non-orientable maps~\cite{bollobas02}.) 

The surface Tutte polynomial of a map is formally more akin to the universal $V$-function of Tutte~\cite{tutte47, tutte54} (see also~\cite[Chapter IX]{tutte01}) than it is to the dichromate in that it has an unbounded number of variables (although for a given map the number of variables is finite): Tutte's universal $V$-function of a graph has variables indexed by the nullities of the connected components of subgraphs; the surface Tutte polynomial of a map has variables indexed by the orientability and genus of the connected components of submaps.
Having infinitely many variables is unavoidable if the number of nowhere-identity local $G$ flows 
is to be included as an evaluation, because of the way the number of such flows depends on the dimensions of the irreducible representations of $G$ (and not just on the size of $G$). (Tutte~\cite{tutte49} showed the number of nowhere-zero flows of a graph taking values in an additive abelian group depends only on the size of the group.) 
While the surface Tutte polynomial is not itself an invariant of the underlying $\Delta$-matroid of a map, it contains specializations that do have this property, including, apart from the Krushkal polynomial, an as yet unstudied four-variable $\Delta$-matroid invariant that we introduce in Section~\ref{sec:Q}. (In this respect the surface Tutte polynomial is similar to the $U$-polynomial of Noble and Welsh~\cite{NW99}, a graph invariant in infinitely many variables, 
which is not itself an invariant of the underlying matroid of the graph, even though it contains many such matroid invariants as specializations, including the Tutte polynomial.)

\subsubsection*{Organization}

As well as the motivation given by Tutte's definition of the dichromate of a graph, we have also in this paper drawn on Tutte's permutation axiomatization of maps~\cite[Chapter X]{tutte01}, which is particularly well-suited to the study of local flows and local tensions of maps, and moreover permits us to rigorously establish some key properties of map operations. While other representations of maps, such as the chord diagram representation used for example by Bollob\'as and Riordan~\cite{bollobas01}, the ribbon graph representation used by Bollob\'as and Riordan ~\cite{bollobas02}, Krushkal~\cite{krushkal11}, and others, or the combinatorial embedding used for instance by Mohar and Thomassen~\cite{mohar01}, have many advantages, we found Tutte's premap representation of maps the most convenient for our purposes.

An outline of the paper follows to help orient the reader. 

In Section~\ref{section:notation} we start by viewing maps more conventionally as graphs embedded in compact surfaces, introduce numerical map parameters such as the genus, and then proceed in the remainder of the section to describe Tutte's permutation axiomatization of maps, define the (surface) dual of a map and deletion and contraction of edges in maps, and derive properties of these operations. (The proof of Lemma~\ref{lem:genus_submaps} in this already lengthy section is deferred to Appendix~\ref{s.proof_s}.) 

The subject of this paper, the surface Tutte polynomial for maps, is introduced in Section~\ref{sec:surface_Tutte}. 
We derive some elementary properties of the surface Tutte polynomial and show that it includes Butler's extension~\cite{butler12} of the Krushkal polynomial~\cite{krushkal11} to maps (non-orientable as well as orientable). 

In Section~\ref{section:flowstensions} we use Tutte's permutation axiomatization of maps to give a 
streamlined definition of local flows and local tensions of maps taking values in a finite group. 
The key result of this paper is Theorem~\ref{theorem:flow_count}, giving an explicit formula for the number of local flows of a map taking non-identity values in a given finite group. The main steps in the proof of this theorem are given in Section~\ref{subsec:proof of eval}, after stating some of its immediate corollaries in Section~\ref{section:corollariesofflowcount}, including those evaluations of the surface Tutte polynomial that give the number of  nowhere-identity local flows and number of nowhere-identity local tensions of a map. 
One of the two key ingredients needed in the proof of Theorem~\ref{theorem:flow_count} is a combinatorial version of the classification theorem for compact surfaces. As a reference for this theorem, and for the language of cell complexes that is needed to utilize it, we found the relatively new book \cite{gaxu13} suitable. The other key ingredient is a result on counting homomorphisms from the fundamental group of a surface to a given finite group. This result can be found in the literature but is not readily accessible to combinatorialists. We have therefore included a proof in Appendix~\ref{sec:proof of flow count} that is new and only uses elementary representation theory. 

As well as the number of nowhere-identity local flows and nowhere-identity local tensions the surface Tutte polynomial contains other significant map invariants as specializations. In Section~\ref{section:otherevaluations} we consider evaluations analogous to those of the Tutte polynomial of a connected graph that enumerate spanning trees, spanning forests and connected spanning subgraphs. In this section we also introduce a different normalization of the surface Tutte polynomial in Proposition~\ref{prop:surface_tutte_renorm} and a four-variable specialization of it in Definition~\ref{def:Q} similar in form to the Krushkal polynomial.

\section{Graphs, maps and operations on maps}\label{section:notation}

Graphs in this paper are finite but may contain loops and multiple edges. Let $\Gamma = (V, E)$ be a graph with vertex set $V$ and edge set $E$. 
For an edge $e \in E$, the graph $\Gamma\backslash e$ obtained from $\Gamma$ by \emph{deletion} of $e$ is the graph $( V, E\setminus\{e\})$. The graph $\Gamma/e$ obtained from $\Gamma$ by \emph{contraction} of $e$ is defined by first deleting $e$ and then identifying the endpoints of $e$.
If $e$ is a loop in $\Gamma$ then $\Gamma/e=\Gamma\backslash e$.

 \subsection{Graphs embedded into compact surfaces}
A \emph{surface} is a two-dimensional topological manifold. 
By the classification theorem for compact surfaces,  a compact surface is either orientable and homeomorphic to a sphere with $g\geq 0$ handles (connected sum of $g$ tori; a sphere if $g=0$) or non-orientable and homeomorphic to a sphere with $g\geq 1$ cross-caps (connected sum of $g$ real projective planes). 
The non-negative integer $g$ is called the (non-)orientable {\em genus} of the surface. 

Perhaps the most usual way to define a map is as a 2-cell embedding of a graph, see e.g.
~\cite{mohar01}: 
\begin{definition}\label{definition:map}
A $\emph{connected map}$ $M$ is a connected graph $\Gamma$ embedded in a connected surface $\Sigma$ (i.e., considered as a subset $\Gamma \subset \Sigma$) such that
\begin{enumerate}
\item vertices are represented as distinct points in the surface,
\item edges are represented as continuous curves in the surface only intersecting at vertices (endpoints),
\item the complement $\Sigma\setminus\Gamma$ of $\Gamma$ inside $\Sigma$ is a disjoint union of connected components, called the $\emph{faces}$ of $M$.
Each face is homeomorphic to an open disc in $\R^2$. 
\end{enumerate}

\noindent A \emph{map} is a disjoint union of connected maps, each embedded in its own surface.
\end{definition}

\begin{definition}\label{definition:genus} Let $M$ be a connected map embedded in a surface $\Sigma$. The \emph{genus} $g(M)$ of $M$ is the genus of $\Sigma$. 
The {\em Euler genus} $s(M)$ of $M$ is the  Euler genus of $\Sigma$, i.e., $$s(M)=\begin{cases} 2g(M) & \mbox{if $M$ is embedded in  orientable $\Sigma$,}\\
  g(M) & \mbox{if $M$ is embedded in  non-orientable $\Sigma$.}\end{cases}$$
The {\em signed genus} $\bar{g}(M)$ of $M$ is the parameter
$$\bar{g}(M)=2s(M)-3g(M)=\begin{cases} g(M) & \mbox{if $M$ is embedded in orientable $\Sigma$,}\\
  -g(M) & \mbox{if $M$ is embedded in non-orientable $\Sigma$.}\end{cases}$$
\end{definition}

For a connected map $M$ given by embedding a graph $\Gamma = (V,E)$ in a surface $\Sigma$, we identify vertices and edges of $\Gamma$ with their representations in $\Sigma$; the set of vertices of $M$ is thereby identified with $V$ and the set of edges of $M$ with $E$. 
A face of $M$ is identified with the multiset of edges of $\Gamma$ that compose its boundary. The collection of faces of $M$ is denoted by $F$.
Two connected maps are said to be \emph{equivalent} if there exists a homeomorphism between the surfaces
in which the two graphs are embedded, which when restricted to the graphs is a graph isomorphism.  The map $M$ is determined up to equivalence by the triple $(V,E,F)$.  

For a connected map $M= (V,E,F)$, set $v(M) := |V|, e(M) := |E|$ and $f(M): = |F|$.

\noindent The {\em Euler characteristic} of $M$ is defined by 
\[
\chi(M) := v(M) - e(M) + f(M). 
\]
\noindent Euler's formula states that 
$$\chi(M)=2-s(M).$$ 

We extend the parameters $v,e,f,g,s$ and $\chi$ additively over disjoint unions of connected maps to maps that are not connected. Defining for a map $M$ the parameter $k(M)$ to be equal to the number of connected maps of which it is composed, Euler's formula is
\begin{equation}\label{eq:Euler}\chi(M)=2k(M)-s(M).\end{equation}

In Definition~\ref{definition:map} we introduced maps as 2-cell embedded graphs; we shall however find the language of combinatorial maps more convenient for our purposes. 
Tutte's permutation axiomatization~\cite{tutte01} of maps is not only well suited for defining local flows and tensions of a map, but also allows us to define contraction and deletion of edges of a map in a way that permits rigorous proofs of properties of these operations. 
While some of the results we derive in the remainder of this section can be found elsewhere (in different forms), see e.g. \cite{damlien03,EMM13,mohar01,tutte01}, we include them for the sake of completeness.

 \subsection{Premaps and Tutte's permutation axiomatization for maps}\label{sec:premaps}
We draw on Chapter X of Tutte's monograph~\cite{tutte01} in defining maps, orientable or non-orientable, in terms of permutations on a finite set. For permutations $\alpha_1,\dots,\alpha_t$ of the same set, we denote by $\langle \alpha_1,\dots,\alpha_t \rangle$ the group of permutations generated by them. The identity permutation is denoted by~$\iota$. 

\begin{definition} \label{def:premap}
	A {\em connected premap} is an ordered triple $(\theta, \sigma, \tau)$ of permutations, each acting on a set $C$ of $4m$ elements (called {\em crosses}), where $m$ is a nonnegative integer, such that
	\begin{enumerate}[(1)]
		\item $\theta^2=\sigma^2=\iota$ and $\theta\sigma=\sigma\theta$,
		\item for any $a\in C$ the elements $a, \theta a, \sigma a, \theta\sigma a$ are distinct,
		\item $\tau\sigma=\sigma \tau^{-1}$,
		\item for each $a\in C$, the orbits of $a$ and $\sigma a$ under the action of $\langle \tau \rangle$ are distinct,
		\item $\langle \theta,\sigma, \tau\rangle$ acts transitively on $C$.
	\end{enumerate}

If $m=0$, the permutations  $\theta,\sigma$ and $\tau$ are empty; the permutation $\tau$ is, for formal reasons, set equal to the product of two empty cycles, written as $(\;)(\;)$. 
\end{definition}

A \emph{premap} is a union of connected premaps, called its connected components. 

	Two premaps are said to be \textit{equivalent} if there exists a bijection between the sets of crosses that maps the ordered triple of permutations of the one premap to the ordered triple of permutations of the other premap.

Let us return to the notion of a connected map $M$ in the sense of Definition~\ref{definition:map} as a graph 2-cell-embedded in a surface $\Sigma$, and give an account of Tutte's~\cite{tutte01} construction of a connected premap from the connected map $M$, which we adapt from Goulden and Jackson~\cite{GJ97}.  Each edge $e$ of the connected map $M$ is a simple curve in $\Sigma$ and has two ends (each of the ends of the curve after puncturing it) and two sides (since the surface is locally orientable, for each edge two sides can be distinguished). 

A {\em side-end position} of $e$ is one of the four possible pairings of a side and an end of $e$. \footnote{More formally, for each vertex $v$ of $M$ we take an open neighborhood $D_v$ of $v$ in $\Sigma$ with the following properties: $D_v$ is homeomorphic to an open disc in $\R^2$, contains no other vertex of $M$ than $v$, and no edge of $M$ is properly contained in $D_v$. For each edge $e$ incident with $v$, let $x_v$ be a point in $(e \cap D_v)\setminus v$. If $e$ is a loop then we take two points $x_v$ and $y_v$ in $(e \cap D_v)\setminus v$ such that $x_v$ and $y_v$ are not in the same connected component of $(e \cap D_v)\setminus v$. The point $x_v$ captures one of the {\em ends} of $e$. For each chosen point $x_v$, let $B_{x_v} \subset D_v$ be an open neighborhood around $x_v$ (homeomorphic to an open disc in $\R^2$) such that $M \cap B_{x_v} = e \cap B_{x_v}$ and such that $e \cap B_{x_v}$ is connected. Then $e \cap B_{x_v}$ divides $B_{x_v}$ into two regions, $B_{x_v}^1$ and $B_{x_v}^2$, which are the two {\em sides} corresponding to the end $x_v$. A {\em side-end} position of $M$ then is a triple of the form $(B_{x_v}^i,x_v,e)$, for some $i$.} Side-end positions correspond to crosses of a connected premap, and the correspondence extends further:  

\begin{theorem}[\cite{tutte01, GJ97}] \label{thm:Tutte_premap_map}
Let $M$ be a connected map in the sense of Definition~\ref{definition:map}, and let $C$ be a set of $4m$ symbols assigned bijectively to the side-end positions of $M$. Let $\theta$ be the permutation that interchanges symbols at the same side but different ends of an edge, for each edge.
Let $\sigma$ be the permutation that interchanges symbols at the same end but different sides of an edge, for each edge.

\begin{itemize}
\item[(1)] {\rm Vertices:} Let $v$ be a vertex of $M$ and $(a_1, a_2, \cdots , a_{2k})$ the list of symbols encountered in a tour of the side-end positions incident with $v$ starting at an arbitrary symbol $a_1$, in the unique (local) direction such that $a_2=\sigma a_1$. Then the permutation $\tau$ in Definition~\ref{def:premap} is the permutation whose disjoint cycles are associated in pairs with each vertex $v$, and have the form $(a_{1} \; a_{3} \; \cdots a_{2k-1})$ and $(a_{2k} \; a_{2k-2} \; \cdots a_2)=(\sigma a_{2k-1} \; \sigma a_{2k-3} \; \cdots \sigma a_1)$. The degree of $v$ is $k$. (If $k=0$ we have a pair of empty cycles associated with the isolated vertex $v$.)

\item[(2)] {\rm Edges:} For each $a\in C$, the elements of $\{a,\theta a,\sigma a,\theta\sigma a\}$ are the symbols assigned to the four side-end positions of the same edge. 
\item[(3)] {\rm Faces:} Let $f$ be a face of $M$ and $(b_1, b_2, \cdots , b_{2j})$ the list of symbols encountered in a tour of the side-end positions incident with $f$ starting at an arbitrary symbol $b_1$, in the unique (local) direction such that $b_2=\theta b_1$. Then the disjoint cycles of $\varphi:=\tau\theta\sigma$ are associated in pairs with each face $f$, and have the form $(b_{1} \; b_{3} \; \cdots b_{2j-1})$ and $(b_{2j} \; b_{2j-2} \; \cdots b_2)=(\theta b_{2j-1} \; \theta b_{2j-3} \; \cdots \theta b_1)$. The degree of $f$ is $j$. (If $j=0$ we have a pair of empty cycles associated with the isolated face $f$.)

\end{itemize}
\end{theorem}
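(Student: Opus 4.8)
The plan is to verify the five defining conditions of Definition~\ref{def:premap} and the three assertions (1)--(3) of the theorem one at a time, in each case by unwinding the definitions of $\theta$, $\sigma$, $\tau$ against the local picture of the embedded graph, using repeatedly that the disc neighbourhood $D_v$ of a vertex, and (for the face assertion) each face, is an orientable $2$-manifold with boundary.

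I would begin with the structure of $\theta$ and $\sigma$. For a fixed edge $e$ of $M$, its four side-end positions are indexed by an independent choice of end and of side; hence $\theta$ (flip the end, fix the side) and $\sigma$ (flip the side, fix the end) are fixed-point-free involutions, they commute, and for every $a\in C$ the set $\{a,\theta a,\sigma a,\theta\sigma a\}$ is exactly the set of four side-end positions of the edge containing $a$. This simultaneously gives Definition~\ref{def:premap}(1)--(2), the edge assertion~(2) of the theorem, and the fact that $|C|=4\,e(M)$, so $m=e(M)$.

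Next, well-definedness and the structure of $\tau$. Around a vertex $v$ of positive degree $k$, the $2k$ incident crosses are the two sides of each of the $k$ edge-ends at $v$; since $D_v$ is a disc, these edge-ends inherit a cyclic order up to reversal, and interleaving the two sides of each produces a cyclic sequence $a_1,a_2,\dots,a_{2k}$ in which consecutive entries alternate between ``the other side of the same end'' (the pair $a_{2i-1},\,a_{2i}=\sigma a_{2i-1}$) and ``the first side of the next edge-end''. Reversing the local direction interchanges the two cycles $(a_1\,a_3\cdots a_{2k-1})$ and $(a_{2k}\cdots a_4\,a_2)$, so the unordered pair of cycles, hence $\tau$, is well defined; the constraint $a_2=\sigma a_1$ only records which neighbour of $a_1$ is chosen for each of the two directions. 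Since distinct vertices carry disjoint sets of crosses and every cross lies at a unique vertex, these cycles partition $C$, which proves the vertex assertion~(1) and the equality $\deg(v)=k$. The relations $\sigma a_{2i-1}=a_{2i}$, $\tau a_{2i-1}=a_{2i+1}$, $\tau a_{2i}=a_{2i-2}$ then yield Definition~\ref{def:premap}(3) by a one-line check, and Definition~\ref{def:premap}(4) holds because $a$ and $\sigma a$ lie in the two disjoint cycles attached to the same vertex. For Definition~\ref{def:premap}(5): from any cross one reaches, using $\sigma$ and $\tau$, every cross at the same vertex, and then $\theta$ moves to a cross of the same edge at its other endpoint, so connectedness of $\Gamma$ forces the $\langle\theta,\sigma,\tau\rangle$-orbit to be all of $C$.

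Finally the face assertion~(3), which I expect to be the most delicate step. Here one walks along the boundary of a face $f$: this boundary is a union of edge-sides, and the side-end positions encountered along it alternate between the two ends of a single edge-side --- a $\theta$-step --- and ``turning the corner'' at a vertex, which one verifies to be the step $\tau\sigma$ (equal to $\sigma\tau^{-1}$ by Definition~\ref{def:premap}(3)); composing, advancing by one edge-side along $\partial f$ is precisely the permutation $\varphi=\tau\theta\sigma$. Because $f$ is an open disc, the boundary tour is well defined up to reversal, giving an unordered pair of cycles $(b_1\,b_3\cdots b_{2j-1})$ and $(b_{2j}\cdots b_2)$ with $b_{2i}=\theta b_{2i-1}$; these two cycles are genuinely distinct precisely because the face, being a $2$-cell, is orientable. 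Each cross lies on a unique face, so the $\varphi$-cycles partition $C$ into pairs indexed by the faces, with $\deg(f)=j$, giving assertion~(3); that $\theta$ swaps the two cycles in each pair is the identity $\varphi\theta=\theta\varphi^{-1}$, which is in turn equivalent to Definition~\ref{def:premap}(3). The real obstacle in this last part is orientation bookkeeping in the non-orientable setting, since a facial walk may traverse an edge twice and with opposite induced local orientations: one must describe the corner-turning step intrinsically, through $\sigma$ and $\tau$, rather than by reference to any global orientation, and appeal to orientability of the individual vertex- and face-discs to obtain the pairing of cycles.
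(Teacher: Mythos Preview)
The paper does not actually supply its own proof of this theorem: it is stated with attribution to Tutte's monograph and to Goulden--Jackson, and immediately after the statement the paper only remarks that Tutte gives the reverse construction. So there is nothing in the paper to compare your argument against.

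That said, your outline is sound and is essentially the argument one finds in the cited sources. The verification of axioms (1)--(2) and of assertion~(2) from the side/end flip interpretation is routine, your derivation of the $\tau$-cycles from the cyclic order on the disc $D_v$ is the standard one, and your check of axioms (3)--(5) is correct. For assertion~(3) your identification of the ``turn-the-corner'' step with $\tau\sigma$ (so that one full step along $\partial f$ is $\tau\sigma\cdot\theta=\tau\theta\sigma=\varphi$, using $\theta\sigma=\sigma\theta$) is exactly right, as is your observation that orientability of the individual face-disc is what guarantees the two $\varphi$-cycles at $f$ are distinct even when the ambient surface is non-orientable. The one point where a reader might want slightly more detail is the claim that the corner step is $\tau\sigma$ rather than, say, $\sigma\tau$: this hinges on matching the sense of the face-boundary tour with the sense in which the $\tau$-cycle at the vertex was set up, and it is worth writing out explicitly that if $b_2=\theta b_1$ sits in the vertex tour as some $a_{2i}=\sigma a_{2i-1}$, then the next cross on the same face is $a_{2i+1}=\tau a_{2i-1}=\tau\sigma a_{2i}=\tau\sigma b_2$.
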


	Axiom~(4) of Definition~\ref{def:premap} says that the crosses $a$ and $\sigma a$ (same end, different sides of an edge) belong to two different cycles of~$\tau$.
Likewise, the crosses $a$ and $\theta a$ (same side, different ends of an edge) belong to two different cycles of $\varphi=\tau\theta\sigma$. 

Tutte~\cite[X.5]{tutte01} gives a reverse construction to Theorem~\ref{thm:Tutte_premap_map}, building a connected map in the sense of Definition~\ref{definition:map} from a connected premap. We need not therefore distinguish between a map and its associated premap, and we shall use the notions and notation of either representation interchangeably.

A premap $(\theta,\sigma,\tau)$ on a set of crosses $C$, and the map $M$ represented by it, will be denoted by $(\theta, \sigma, \tau;C)$. 
An orbit of $\langle\theta,\sigma\rangle$, consisting of four crosses,  is an {\em edge}, and a pair of orbits of $\langle\tau\rangle$ in which crosses $a$ in one of the orbits appear as $\sigma a$ in the other orbit is a {\em vertex} of $M$. An edge and a vertex are incident if some cross belongs to both. An edge is a {\em loop} of $M$ if its crosses are all contained in one vertex, and a {\em non-loop}\footnote{A {\em link} in Tutte's terminology.} of $M$ otherwise. With this definition of incidence, the vertices and edges (loops and non-loops) of $M$ are the vertices and edges (loops and non-loops) of a graph $\Gamma(M)$, which we shall call the {\em underlying graph} of $M$. The underlying graph of a map $M$ in the sense of Definition~\ref{definition:map} is the graph of which $M$ is an embedding into a surface.

Given a premap $(\theta,\sigma,\tau;C)$ and setting $\varphi=\tau\theta\sigma$, the quadruple $(\sigma,\theta,\varphi;C)$ is again a premap, as it satisfies the axioms of Definition~\ref{def:premap}. The roles of vertices and faces played in Theorem~\ref{thm:Tutte_premap_map} are reversed in $(\sigma,\theta,\varphi;C)$ from their role in $(\theta,\sigma,\tau;C)$; likewise for the roles of sides and ends of an edge. We thus arrive at a simple description of surface duality for maps in terms of premaps:

\begin{definition} \label{d.dual_map}
Let $M=(\theta,\sigma,\tau;C)$ be a map, and let  $\varphi=\tau\theta\sigma$.
The {\em dual} of $M$ is the map $M^*=(\sigma,\theta,\varphi;C)$.
\end{definition}

A loop of a map $(\theta,\sigma,\tau;C)$, equal to an edge $\{a,\sigma a,\theta a,\theta\sigma a\}$ whose crosses appear in only two permutation cycles of $\tau$, is \emph{twisted}
if $a$ and $\theta a$ appear in the same cycle of $\tau$, while it is {\em non-twisted} if $a$ and $\theta \sigma a$ appear in the same cycle of $\tau$.

\begin{example}\label{example:standardbouquets} Up to equivalence there are two connected maps with one vertex and one edge:\\
\begin{speciaalenumerate}\vspace{-4mm}
\item Let $C=\{a,\theta a,\sigma a, \theta\sigma a\}$. Then the premap $(\theta,\sigma, \tau;C)$ in which $$\tau=( a \;\; \theta\sigma a)\:( \theta a \;\; \sigma a),$$
and $$\varphi=\tau\theta\sigma =(a)\:(\theta\sigma a)\:( \theta a )\:( \sigma a),$$
represents a loop on a single vertex  in the plane.
The dual map, switching $\theta$ and $\sigma$, and $\tau$ and $\varphi$, has two vertices of degree one and a face of degree two. 
\item The premap $(\theta,\sigma, \tau;C)$ in which 
 $$\tau=(a \;\;\theta a)\:(\sigma a \;\; \theta\sigma a),$$
and  $$\varphi=\tau\theta\sigma =(a \;\; \sigma a)\:(\theta a \;\; \theta\sigma a),$$
represents a twisted loop on a single vertex (a loop embedded in the projective plane).
The dual map is the same map (that is, it is self-dual). 
\end{speciaalenumerate}
\end{example}

A connected map in the sense of Definition~\ref{definition:map} is {\em orientable} if the surface in which it is embedded is orientable, and {\em non-orientable} if this surface is non-orientable. These terms are extended to maps generally by stipulating that a map is orientable if all of its connected components are orientable, and non-orientable if at least one of its connected components is non-orientable.
The distinction between orientable and non-orientable connected maps translates as follows when maps are represented by premaps: 
\begin{theorem}[Theorem $X.12$ in \cite{tutte01}]\label{theorem:orient}
Let $M = (\theta, \sigma, \tau;C)$ be a connected premap. Then the action of $\langle \theta\sigma, \tau \rangle$ on $C$ is either transitive (that is, there is precisely one orbit), in which case $M$ is non-orientable, or there are precisely two orbits, in which case $M$ is orientable.
\end{theorem}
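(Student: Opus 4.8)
The plan is to split the statement into an algebraic part, that the action of $\langle\theta\sigma,\tau\rangle$ on $C$ has at most two orbits, and a topological part, that having exactly two orbits is equivalent to orientability of the ambient surface. The algebraic part is quick; the topological part is where the work lies, and it is where I would use Theorem~\ref{thm:Tutte_premap_map} and Tutte's reverse construction to connect the permutation data to the surface.

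For the algebraic part, write $\gamma=\theta\sigma$ and $H=\langle\gamma,\tau\rangle$. Axiom~(1) gives $\gamma^2=\theta^2\sigma^2=\iota$ and $\sigma\gamma\sigma=\sigma\theta=\theta\sigma=\gamma$, while axiom~(3) gives $\sigma\tau\sigma=\tau^{-1}$, so conjugation by $\sigma$ fixes $\gamma$ and inverts $\tau$; hence $\sigma$ normalises $H$, and since $\theta=\gamma\sigma$ the same is true of $\theta$. Therefore $G:=\langle\theta,\sigma,\tau\rangle=\langle H,\sigma\rangle=H\cup H\sigma$, so $H\trianglelefteq G$ with $[G:H]\le 2$. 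By axiom~(5) the group $G$ acts transitively on $C$, so the $H$-orbits form a $G$-invariant partition of $C$ on which $G$ acts transitively through the quotient $G/H$ of order at most two; a transitive action of a group of order at most two has an underlying set of size at most two, so there are at most two $H$-orbits. This yields the dichotomy ``one orbit or two orbits'' (for $m\ge 1$, so that $C\neq\emptyset$); it remains to match it with (non-)orientability.

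For the topological part, recall from Theorem~\ref{thm:Tutte_premap_map} and the reverse construction how $M$ realises a $2$-cell embedding: crosses are side-end positions, $\theta$ and $\sigma$ are the two ``reflections'' along and across an edge, and $\tau$ is ``rotation about a vertex''. The idea is that an orientation of $\Sigma$ is the same datum as a colouring of $C$ by two colours under which $\theta$ and $\sigma$ each interchange the colours (passing to the other side, or the other end, of an edge reverses the local rotational sense) while $\tau$ preserves them (a rotation). If such a colouring exists then, since $\gamma=\theta\sigma$ and $\tau$ preserve it, each colour class is a union of $H$-orbits, and combined with the upper bound of two orbits the colour classes are exactly the two $H$-orbits, so $\Sigma$ is orientable precisely when there are two $H$-orbits. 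Conversely, if $H$ has exactly two orbits $O_1,O_2$ then $H\ne G$, so $\sigma\notin H$, and $\sigma$ (hence also $\theta=\gamma\sigma$) must interchange $O_1$ and $O_2$ — otherwise $G=\langle H,\sigma\rangle$ would fix $O_i$ setwise, contradicting transitivity — so $\{O_1,O_2\}$ is a valid two-colouring and $\Sigma$ is orientable. When $H$ is transitive no such two-colouring of $C$ exists, so $\Sigma$ is non-orientable, completing the equivalence.

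The main obstacle I anticipate is making rigorous the assertion ``an orientation of $\Sigma$ is the same datum as a two-colouring of $C$ with $\theta,\sigma$ colour-reversing and $\tau$ colour-preserving'': one must fix conventions so that $\theta$ and $\sigma$ indeed reverse the local rotational sense, and verify that a coherent choice of rotational senses at the vertices is compatible across every edge-ribbon and extends over every face-disc, i.e.\ that it integrates to a genuine global orientation of the surface produced by the reverse construction. The cleanest route is to observe that $\Sigma$ deformation retracts (after removing the interiors of the face-discs) onto a ribbon neighbourhood $R$ of the underlying graph, that $\Sigma$ is orientable iff $R$ is orientable (re-attaching the face-discs, being $2$-cells, to an oriented surface-with-boundary does not affect orientability), and that orientability of $R$ is exactly the existence of a compatible orientation of all its bands, which is the colouring condition; given this, the algebraic steps above finish the proof.
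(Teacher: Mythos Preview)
The paper does not supply its own proof of this theorem: it is quoted verbatim as Theorem~X.12 of Tutte's monograph and used as a black box, so there is nothing in the paper to compare your argument against.

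That said, your argument is sound. The algebraic half is complete and correct: $\sigma$ conjugates $\theta\sigma$ to itself and $\tau$ to $\tau^{-1}$, so $\sigma$ normalises $H=\langle\theta\sigma,\tau\rangle$, whence $G=\langle H,\sigma\rangle$ has $[G:H]\le 2$ and transitivity of $G$ forces at most two $H$-orbits. Your topological half is also the right idea, and indeed matches how Tutte organises the correspondence in~\cite[Chapter~X]{tutte01}: an orientation of $\Sigma$ amounts to choosing, at each vertex, one of the two $\tau$-cycles in a manner coherent across edges, and that coherence is precisely closure of the chosen set $D$ under $\theta\sigma$. (The paper itself alludes to this in the remark following the theorem, where it identifies one $H$-orbit $D$ with the set of \emph{darts} carrying the orientable rotation system $(\theta\sigma,\tau)$.) You correctly flag that the only real work is in justifying that such a $\theta,\sigma$-reversing, $\tau$-preserving two-colouring of the crosses is equivalent to orientability of the surface built by the reverse construction; your ribbon-graph route (orient the band neighbourhood, then cap the face discs) is a standard and valid way to close that gap.
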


Theorem~\ref{theorem:orient}, along with the construction in Theorem~\ref{thm:Tutte_premap_map} of a connected premap from a connected map, may be seen to be a reformulation of the fact that orientable surfaces in an orientable ambient space are two-sided, while non-orientable surfaces in an orientable ambient space are one-sided.

\begin{remark}
Let $M = (\theta, \sigma, \tau; C)$ be a connected map. As $\langle \theta\sigma, \tau \rangle = \langle \sigma\theta, \varphi \rangle$, where $\varphi = \tau\theta\sigma$, Theorem \ref{theorem:orient} and the equalities $v(M) = f(M^*), e(M) = e(M^*)$ and $f(M) = v(M^*)$ imply that $M^*$ is embedded in the same surface as~$M$.
\end{remark}

\begin{remark}
Suppose $M = (\theta, \sigma,\tau;C)$ is a connected orientable map. %, with $|C| = 4m$. 
Let $D$ be one of the two orbits of $C$ under the action of $\langle \theta\sigma, \tau \rangle$. %Note that $|D| = 2m$. 
Then the ordered pair of permutations $(\theta\sigma, \tau)$, in which the domains of $\theta\sigma$ and $\tau$ are both restricted to $D$, corresponds with the connected map $(\alpha,\tau)$ on the set of darts $D$ as defined in~\cite{goodall16}. Conversely, every connected orientable map on a set of darts as given by Definition~3.2 in~\cite{goodall16} yields a connected orientable map on a set of crosses~\cite[Theorem $X.13$]{tutte01}, informally speaking by viewing the connected orientable map from either side of the surface in which it is embedded.
\end{remark}

\subsection{Deletion and contraction of edges in maps}\label{section:operations}

Deletion and contraction are (surface) dual operations for plane graphs, and this duality extends to matroids more generally (and in this way the Tutte polynomial of a graph lifts to a matroid invariant). However, for non-plane maps this type of duality fails: surface duality and matroid duality no longer coincide.  
Deletion of an edge in the dual map $M^*$ when interpreted in $M$ yields a definition of edge contraction in maps which differs from contraction of the corresponding edge in the underlying graph.  

The purpose of this section is to give a clear, workable definition of deletion and contraction of edges in maps and to establish properties of these operations that will be used in the sequel. 
%\subsubsection{Deletion}

\begin{definition}[Map edge deletion\footnote{Definition~\ref{d.deletion} coincides with the definition of deletion for generalized maps~\cite[Section~3, 1-Removal]{damlien03}.}]\label{d.deletion}
	Let $M=(\theta, \sigma,\tau;C)$  be a map, and let \\$e=\{a,\theta a, \sigma a, \theta\sigma a\}$ be an edge of $M$. Then the map $M\backslash e$ obtained from $M$ by \emph{deleting} $e$ is given by the map $(\theta',\sigma',\tau';C')$ in which $C'=C\setminus\{a, \theta a,\sigma a, \theta \sigma a\}$ and the permutations are defined as follows.
	For any $b\in C'$,
	\begin{itemize}
		\item $\theta'b=\theta b$.
				\item $\sigma'b=\sigma b$.
		\item 
		$\tau'b=\tau^jb$, where $j\geq 1$ is the minimum positive integer for which $\tau^jb\notin \{a,\sigma a,\theta a,\theta\sigma a\}$. % and $\tau^{i}(b)\in \{a,\sigma a,\theta a,\theta\sigma a\}$ for all $i$, $1\leq i<j$. 
Equivalently, since a cycle of $\tau$ contains at most two crosses among $\{a,\sigma a,\theta a,\theta\sigma a\}$, 
		\[
		\tau'b=\begin{cases}
		\tau b & \text{ if } \tau b\in C',\\
		\tau^2 b  & \text{ if } \tau b\in \{a,\theta a,\sigma a,\theta\sigma a\} \text{ and } \tau^2b\in C',\\
		\tau^3 b & \text{ if } \tau b,\tau^2 b\in \{a,\theta a,\sigma a,\theta\sigma a\}.\\% \text{ and } \tau^3(b)\in C'.\\
		\end{cases}
		\]

		\item Empty cycles of $\tau$ remain as empty cycles of $\tau'$, and $\tau'$ has an extra empty cycle for each cycle of $\tau$ that contains only crosses in $\{a,\theta a,\sigma a, \theta\sigma a\}$. %Then the number of empty cycles in $\tau'$ is the number of empty cycles in $\tau$ plus $s$.
	\end{itemize}
	
	%	
	%	
	%	 The map $M\setminus e$, where the edge $e=\{a,\sigma a,\theta a, \theta\sigma a\}$  
\end{definition}

As described in Theorem~\ref{thm:Tutte_premap_map}, the orbits of $\tau$ in a map $M=(\theta,\sigma,\tau;C)$ occurring in pairs correspond to vertices of $M$. To a vertex $v$ of degree $d$ corresponds a pair of orbits $(\;Z\;)$ and $(\;Z^{-1}\;)$, where $Z=(c_1 \; c_2 \; \ldots \; c_d)$ is a sequence of $d$ crosses and $Z^{-1}:=(\sigma c_d \; \ldots \; \sigma c_2 \; \sigma c_1)$. % is $Z$ with an application of $\sigma$ taken in reverse order. 
To an isolated vertex corresponds a pair of empty cycles $(\:)(\:)$.
Conjugation of the cycle $(\, Z\,)$ cyclically shifts the sequence $Z$ without changing the permutation $\tau$. We shall call this operation {\em rotation} about the vertex $v$.

The pairs of orbits of $\tau$ corresponding to the endvertices of a non-loop %$\{a,\theta a,\sigma a,\theta\sigma a\}$ 
take the form
\begin{equation}\label{eq:non-loop}\left(\begin{array}{cc} a & X \end{array}\right)\:\left(\begin{array}{cc} \sigma a & X^{-1}\end{array}\right), \quad\mbox{ and }\quad \left(\begin{array}{cc} \theta\sigma a & Y \end{array}\right)\: \left(\begin{array}{cc} \theta a & Y^{-1}\end{array}\right).\end{equation}

The pair of orbits for a vertex incident with a non-twisted loop %$e=\{a,\theta a,\sigma a,\theta\sigma a\}$ 
take the form 
\begin{equation}\label{eq:non-twisted_loop}\left(\begin{array}{cccc} a & X & \theta\sigma a & Y\end{array}\right)\:\left(\begin{array}{cccc} \sigma a & Y^{-1} & \theta a & X^{-1}\end{array}\right),\end{equation} where $X, Y$ are (possibly empty) sequences of crosses.

Similarly, 
 the pair of orbits for a vertex incident with a twisted loop %$\{a,\theta a,\sigma a,\theta\sigma a\}$ 
take the form 
\begin{equation}\label{eq:twisted_loop}\left(\begin{array}{cccc} a & X & \theta a & Y\end{array}\right)\:\left(\begin{array}{cccc} \sigma a & Y^{-1} & \theta\sigma a & X^{-1}\end{array}\right).\end{equation}

\begin{observation}\label{obs:deletion}
According to Definition~\ref{d.deletion}, deleting a non-loop $\{a,\theta a,\sigma a,\theta\sigma a\}$ gives a permutation $\tau'$ equal to $\tau$ restricted to $C\setminus\{a,\theta a,\sigma a,\theta\sigma a\}$ except for the two pairs of orbits~\eqref{eq:non-loop} containing crosses in $\{a,\theta a,\sigma a,\theta\sigma a\}$, which are replaced by  
$$\left(\begin{array}{c} X \end{array}\right)\:\left(\begin{array}{c} X^{-1} \end{array}\right), \quad\mbox{ and }\quad \left(\begin{array}{c} Y \end{array}\right)\:\left(\begin{array}{c} Y^{-1} \end{array}\right).$$ 
Likewise, deleting a non-twisted (twisted) loop $\{a,\theta a,\sigma a,\theta\sigma a\}$ preserves $\tau$ except for the pair of orbits~\eqref{eq:non-twisted_loop} (respectively~\eqref{eq:twisted_loop}) containing crosses in $\{a,\theta a,\sigma a,\theta\sigma a\}$, which are replaced by  
$$\left(\begin{array}{cc} X & Y\end{array}\right)\:\left(\begin{array}{cc} Y^{-1} & X^{-1}\end{array}\right).$$ 
\end{observation}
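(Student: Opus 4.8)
The plan is a direct verification of the three cases straight from Definition~\ref{d.deletion}, tracking the action of $\tau'$ on each cross of $C'=C\setminus\{a,\theta a,\sigma a,\theta\sigma a\}$. First I would dispose of the routine part: if $b\in C'$ lies in a cycle of $\tau$ disjoint from $e=\{a,\theta a,\sigma a,\theta\sigma a\}$, then $\tau b\in C'$, so the first line of the case split in Definition~\ref{d.deletion} gives $\tau'b=\tau b$; hence $\tau'$ already agrees with $\tau$ on every such cycle, and it remains only to examine the cycles of $\tau$ that meet $e$. By the forms \eqref{eq:non-loop}, \eqref{eq:non-twisted_loop} and \eqref{eq:twisted_loop} recorded just above --- which, together with axiom~(4) of Definition~\ref{def:premap}, enumerate all ways the four crosses of an edge can be distributed among the cycles of $\tau$ --- those cycles are exactly the displayed pairs, and in particular every cycle of $\tau$ meets $e$ in at most two crosses, so $\tau^3 b\in C'$ always and the three-line formula for $\tau'$ is well defined. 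I would then treat the non-loop, non-twisted loop and twisted loop cases in turn.

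For a non-loop, write $X=(x_1\;\cdots\;x_p)$, so the relevant cycle is $(a\;x_1\;\cdots\;x_p)$, i.e. $\tau a=x_1$, $\tau x_i=x_{i+1}$ for $i<p$, and $\tau x_p=a$. For $i<p$ we have $\tau x_i\in C'$, so $\tau'x_i=x_{i+1}$; for $x_p$ we have $\tau x_p=a\notin C'$ but $\tau^2 x_p=\tau a=x_1\in C'$, so $\tau'x_p=x_1$. Hence $(a\;X)$ becomes $(X)$, the empty-$X$ case being precisely the ``extra empty cycle'' clause of Definition~\ref{d.deletion}. The companion cycle $(\sigma a\;X^{-1})$ is handled by the identical computation and becomes $(X^{-1})$; the other endvertex, built on $\theta a$ and $\theta\sigma a$, is symmetric and yields $(Y)(Y^{-1})$, as claimed.

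For the loop cases the cycle to analyse is a single four-block cycle, and only its two ``seams'' need attention. In the non-twisted case with $X=(x_1\;\cdots\;x_p)$ and $Y=(y_1\;\cdots\;y_q)$ the cycle is $(a\;x_1\cdots x_p\;\theta\sigma a\;y_1\cdots y_q)$; away from the seams $\tau'$ copies $\tau$, at $x_p$ we have $\tau x_p=\theta\sigma a\notin C'$ so $\tau'x_p$ equals $\tau^2 x_p=y_1$ (or $\tau^3 x_p=x_1$ when $Y$ is empty), and at $y_q$ we have $\tau y_q=a\notin C'$ so $\tau'y_q$ equals $\tau^2 y_q=x_1$ (or $\tau^3 y_q=y_1$ when $X$ is empty). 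Either way the cycle collapses to $(X\;Y)$, and the companion cycle $(\sigma a\;Y^{-1}\;\theta a\;X^{-1})$ collapses to $(Y^{-1}\;X^{-1})$ by the same bookkeeping. The twisted case is word-for-word the same with $\theta\sigma a$ replaced by $\theta a$: then $\tau^2 x_p=\tau(\theta a)=y_1$ and $\tau^2 y_q=\tau a=x_1$, again producing $(X\;Y)$ and $(Y^{-1}\;X^{-1})$.

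There is no genuine obstacle; the only point needing care is the edge-case accounting when $X$ or $Y$ is empty, which forces one to pass to $\tau^2$ or $\tau^3$ --- exactly the reason Definition~\ref{d.deletion} records a three-way split for $\tau'$, and the reason it is worth noting explicitly (via \eqref{eq:non-loop}--\eqref{eq:twisted_loop} and axiom~(4)) that no cycle of $\tau$ contains three crosses of $e$.
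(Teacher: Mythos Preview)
Your verification is correct and is exactly the direct unpacking of Definition~\ref{d.deletion} that the paper intends; the paper itself offers no separate proof, treating the statement as an immediate consequence of the definition (hence ``Observation''). Your careful handling of the empty-$X$/empty-$Y$ edge cases and the remark that no cycle of $\tau$ contains more than two crosses of $e$ (justifying the three-line formula) are precisely the points one has to check, and you have done so.
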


Deletion of an edge $e$ in a map $M$ with underlying graph~$\Gamma$ gives a map $M\backslash e$ whose underlying graph is $\Gamma\backslash e$. In this sense map edge deletion coincides with graph edge deletion. We have $v(M)=v(M\backslash e)=v(\Gamma\backslash e)=v(\Gamma)$, $e(M)-1=e(M\backslash e)=e(\Gamma\backslash e)=e(\Gamma)-1$ and $k(M\backslash e)=k(\Gamma\backslash e)$. However, deletion of $e$ may reduce the genus, in which case $\Gamma\backslash e$ is embedded in a different surface than $\Gamma$.  
%In particular,   
%\begin{equation}\label{eq:nullity_del}n(M\backslash e)=n(\Gamma\backslash e).\end{equation}

%\subsubsection{Contraction}

\begin{definition}[Map edge contraction]\label{d.contraction}
	Let $M=(\theta,\sigma,\tau;C)$ be a map, and let $e=\{a,\theta a, \sigma a, \theta\sigma a\}$ be an edge of $M$. Then the map $M/ e$ obtained from $M$ by \emph{contracting} $e$ is the map $(\theta'',\sigma'',\tau'';C'')$ in which $C''=C\setminus\{a,\theta a, \sigma a, \theta \sigma a\}$ and the permutations are defined as follows.
	For any $b\in C''$,
	\begin{itemize}
		\item $\theta'' b=\theta b$.
		\item $\sigma'' b=\sigma b$.

		\item 
		
		$\tau''b=
		\begin{cases}
		\tau b & \text{ if } \tau b\in C'',\\
		\tau \theta\sigma  \tau b & \text{ if } \tau b\in \{a,\sigma a, \theta a, \theta \sigma a\} \text{ and } \tau \theta\sigma \tau b\in C'',\\
		\tau(\theta \sigma  \tau)^2 b & \text{ if } \tau b, \tau\theta\sigma \tau b\in\{a,\sigma a, \theta a, \sigma \theta a\}.\\ %\text{ and } \tau( \theta \sigma \tau)^2(b)\in C''.\\
		\end{cases}$
		
		\item Empty cycles of $\tau$ %(which are the same as empty cycles of $\tau\theta\sigma$) 
remain as empty cycles of $\tau''$, and %($\varphi''$ and hence) 
$\tau''$ has an extra empty cycle for each cycle of $\tau$ that contains only crosses in $\{a,\sigma a, \theta a, \theta\sigma a\}$.  
%Let $s$ be the number of cycles in $\tau\theta\sigma$ containing only crosses in $\{a,\sigma a, \theta a, \theta\sigma a\}$. Then the number of empty cycles in $\tau''$ is the number of empty cycles in $\tau$ plus $s$.
	\end{itemize}
\end{definition}

\begin{observation}\label{obs:contraction} According to Definition~\ref{d.contraction}, contracting a non-loop $\{a,\theta a,\sigma a,\theta\sigma a\}$ gives a permutation $\tau''$ equal to $\tau$ restricted to $C\setminus\{a,\theta a,\sigma a,\theta\sigma a\}$ except for the two pairs of orbits~\eqref{eq:non-loop} containing crosses in $\{a,\theta a,\sigma a,\theta\sigma a\}$, which are replaced by the single pair 
$$\left(\begin{array}{cc} X & Y\end{array}\right)\:\left(\begin{array}{cc} X^{-1} & Y^{-1}\end{array}\right).$$ 
Contracting a non-twisted loop $\{a,\theta a,\sigma a,\theta\sigma a\}$ preserves $\tau$ except for the pair of orbits~\eqref{eq:non-twisted_loop} containing crosses in $\{a,\theta a,\sigma a,\theta\sigma a\}$, which are replaced by the two pairs  
$$\left(\begin{array}{c} X \end{array}\right)\:\left(\begin{array}{c} X^{-1}\end{array}\right)\quad\mbox{ and }\quad\left(\begin{array}{c} Y\end{array}\right)\:\left(\begin{array}{c} Y^{-1}\end{array}\right).$$ 
Finally, contracting a twisted loop $\{a,\theta a,\sigma a,\theta\sigma a\}$ preserves $\tau$ except for the pair of orbits~\eqref{eq:twisted_loop} containing crosses in $\{a,\theta a,\sigma a,\theta\sigma a\}$, which are replaced by the pair
$$\left(\begin{array}{cc} X & Y^{-1}\end{array}\right)\:\left(\begin{array}{cc} X^{-1} & Y\end{array}\right).$$

 \end{observation}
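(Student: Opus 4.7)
The plan is to verify each case of the observation by direct unfolding of Definition~\ref{d.contraction} on the orbit structures~\eqref{eq:non-loop}, \eqref{eq:non-twisted_loop}, \eqref{eq:twisted_loop}, in precise parallel with the reasoning underlying Observation~\ref{obs:deletion}. For any cross $b\in C''$ one has $\tau''b=\tau b$ unless $\tau b\in\{a,\theta a,\sigma a,\theta\sigma a\}$; consequently the entire computation reduces to tracking $\tau''$ on the at most four \emph{boundary} crosses that immediately precede an $e$-cross in the $\tau$-cycles at the affected vertices. On such a boundary cross we use the rule $\tau''b=\tau\theta\sigma\tau b$ together with the fact that $\theta\sigma$ acts on $\{a,\theta a,\sigma a,\theta\sigma a\}$ as the fixed-point-free involution $a\leftrightarrow\theta\sigma a$, $\theta a\leftrightarrow\sigma a$ (a direct consequence of $\theta^2=\sigma^2=\iota$ and $\theta\sigma=\sigma\theta$).

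In the non-loop case, writing $X=(x_1,\ldots,x_k)$ and $Y=(y_1,\ldots,y_m)$, the four boundary crosses are $x_k,\sigma x_1,y_m,\sigma y_1$ with $\tau$-images $a,\sigma a,\theta\sigma a,\theta a$ respectively; reading $\tau(\theta\sigma a)=y_1$ and $\tau(\theta a)=\sigma y_m$ from the third and fourth orbits of~\eqref{eq:non-loop} gives $\tau''x_k=y_1$, $\tau''y_m=x_1$, $\tau''\sigma x_1=\sigma y_m$, $\tau''\sigma y_1=\sigma x_k$, after which the four original orbits combine into the single pair $(X\;Y)(X^{-1}\;Y^{-1})$.

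In the non-twisted loop case~\eqref{eq:non-twisted_loop} all four boundary crosses sit on the single vertex pair; a direct computation yields $\tau''x_k=\tau\theta\sigma(\theta\sigma a)=\tau a=x_1$, $\tau''y_m=\tau\theta\sigma a=y_1$, $\tau''\sigma y_1=\tau\sigma a=\sigma y_m$, $\tau''\sigma x_1=\tau\theta a=\sigma x_k$, so that the orbit pair splits into the two disjoint pairs $(X)(X^{-1})$ and $(Y)(Y^{-1})$. The twisted loop case~\eqref{eq:twisted_loop} is identical in form except that $\theta a$ and $\theta\sigma a$ exchange their cyclic positions within the orbit pair; the same mechanism now produces $\tau''x_k=\tau\sigma a=\sigma y_m$, $\tau''y_m=\tau(\theta\sigma a)=\sigma x_k$, $\tau''\sigma y_1=\tau a=x_1$ and $\tau''\sigma x_1=\tau\theta a=y_1$, giving the single merged pair $(X\;Y^{-1})(X^{-1}\;Y)$.

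The main obstacle is purely bookkeeping across the four boundary crosses and their companions in the two orbits of each vertex pair. Two minor subtleties are worth noting: the iterated clause $\tau''b=\tau(\theta\sigma\tau)^2 b$ of Definition~\ref{d.contraction} is triggered only when one of $X,Y$ is empty, and in that situation the iterate collapses to the same answers written above with the empty sequence substituted; likewise, the final clause of Definition~\ref{d.contraction} concerning empty cycles supplies the correct degenerate form of the displayed replacement whenever a cycle of $\tau$ consists entirely of $e$-crosses, as happens for an isolated loop or a pendant endpoint of a non-loop.
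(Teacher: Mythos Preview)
Your proposal is correct and is precisely the direct unfolding of Definition~\ref{d.contraction} that the paper intends; the paper itself states Observation~\ref{obs:contraction} without proof, leaving the case-by-case verification implicit. Your handling of the boundary crosses, the use of the involution $\theta\sigma$ on $\{a,\theta a,\sigma a,\theta\sigma a\}$, and your remarks on the degenerate cases (empty $X$ or $Y$ triggering the third clause, and the empty-cycle clause covering isolated loops and pendant ends) are all accurate.
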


Contraction of a non-loop or twisted loop $e$ in a map $M$ with underlying graph $\Gamma$ gives a map $M/e$ whose underlying graph is $\Gamma/e$ (equal to $\Gamma\backslash e$ when $e$ is a loop). When $e$ is a non-twisted loop, however, the underlying graph of $M/e$ is not isomorphic to $\Gamma/e$, as it has one more vertex. 

Rewriting Definition~\ref{d.contraction} in terms of $\varphi=\tau\theta\sigma$ makes it apparent that contraction of an edge %$e=\{a,\theta a,\sigma a,\theta\sigma a\}$ 
 of a map %$M=(\theta,\sigma,\tau;C)$
 corresponds to deletion of the same edge in the dual map. % $M^*=(\sigma,\theta,\varphi;C)$. %, as $\vaphi''b=\vaphi^jb$ for the minimum integer $j\geq 1$ such that $\vaphi^jb\notin\{a,\theta a,\sigma a,\theta\sigma a\}$.   

\begin{proposition}%[Duality of deletion and contraction]
\label{p.del_dual_cont}
	Let $M=(\theta,\sigma,\tau;C)$ be a map and $M^{\ast}=(\sigma,\theta,\tau\theta\sigma;C)$ its dual.
	For an edge $e=\{a, \theta a,\sigma a, \theta\sigma a\}$, we have % of $M$ and $M^{\ast}$, 
$(M/e)^{\ast}=M^{\ast}\backslash e$ and $(M\backslash e)^{\ast}=M^{\ast} / e$.
%	\begin{enumerate}[a)]
%		\item\label{e.1} $(M/e)^{\ast}=M^{\ast}\backslash e^{\ast}$
%		\item\label{e.2} $(M\backslash e)^{\ast}=M^{\ast} / e^{\ast}$.
%	\end{enumerate}
\end{proposition}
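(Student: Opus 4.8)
The plan is to prove the identity $(M/e)^{\ast}=M^{\ast}\backslash e$ directly and to read off the companion identity from it. For the reduction, note that $M^{\ast\ast}=M$: dualizing $M=(\theta,\sigma,\tau;C)$ twice returns $(\theta,\sigma,\tau(\theta\sigma)^{2};C)=(\theta,\sigma,\tau;C)$, because $\theta^{2}=\sigma^{2}=\iota$ and $\theta\sigma=\sigma\theta$. Hence, once $(N/e)^{\ast}=N^{\ast}\backslash e$ is established for every map $N$, applying it to $N=M^{\ast}$ gives $(M^{\ast}/e)^{\ast}=M\backslash e$, and dualizing both sides yields $M^{\ast}/e=(M\backslash e)^{\ast}$.

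For the main identity, write $\varphi=\tau\theta\sigma$, so $M^{\ast}=(\sigma,\theta,\varphi;C)$ and, rearranging with $\theta^{2}=\sigma^{2}=\iota$, $\theta\sigma=\sigma\theta$, also $\tau=\varphi\sigma\theta$. Let $M/e=(\theta'',\sigma'',\tau'';C'')$ be as in Definition~\ref{d.contraction}, so $(M/e)^{\ast}=(\sigma'',\theta'',\tau''\theta''\sigma'';C'')$, and let $M^{\ast}\backslash e=(\sigma',\theta',\varphi';C')$ be as in Definition~\ref{d.deletion} applied to $M^{\ast}$. Both constructions leave the set of crosses equal to $C\setminus e$ and leave $\theta$ and $\sigma$ unchanged on it, so $C''=C'$, $\theta''=\theta'$ and $\sigma''=\sigma'$; it remains to identify the third permutations. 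For $b\in C''$ one has $\sigma b,\theta\sigma b\notin e$, hence $(\tau''\theta''\sigma'')(b)=\tau''(\theta\sigma b)$. Evaluating the three cases of Definition~\ref{d.contraction} at $c=\theta\sigma b$ and repeatedly using $\tau\theta\sigma=\varphi$ gives $\tau c=\varphi(b)$, $\ \tau\theta\sigma\tau c=\varphi^{2}(b)$ and $\ \tau(\theta\sigma\tau)^{2}c=\varphi^{3}(b)$; therefore $\tau''(\theta\sigma b)$ equals $\varphi^{j}(b)$ for the least $j\ge1$ with $\varphi^{j}(b)\notin e$, which is exactly $\varphi'(b)$ in Definition~\ref{d.deletion}.

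To see that these three cases are exhaustive, I would invoke the fact that every cycle of $\tau$, and hence every cycle of $\varphi$, meets the four crosses of $e$ in at most two of them; this is visible from the orbit forms~\eqref{eq:non-loop},~\eqref{eq:non-twisted_loop},~\eqref{eq:twisted_loop} (applied to $\tau$, and to $\varphi$ via passing to the dual), and it forces $j\le3$ above. Finally one must check that the empty cycles adjoined to the vertex permutation match on the two sides; here I would argue directly from Observations~\ref{obs:deletion} and~\ref{obs:contraction}, which record precisely how the orbit-pairs around the ends of $e$ are modified, and verify that contracting $e$ in $M$ produces the same isolated vertices (as the dual registers them) as deleting $e$ in $M^{\ast}$. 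I expect this last bookkeeping of isolated vertices to be the only delicate point; the rest is the routine rewriting of Definition~\ref{d.contraction} in terms of $\varphi=\tau\theta\sigma$ already anticipated in the text.
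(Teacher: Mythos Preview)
Your proposal is correct and follows essentially the same route as the paper's proof: reduce to $(M/e)^{\ast}=M^{\ast}\backslash e$ via $(M^{\ast})^{\ast}=M$, note that the cross set and the restrictions of $\theta,\sigma$ agree, and then set $c=\theta\sigma b$ to identify the three cases of Definition~\ref{d.contraction} with $\varphi(b),\varphi^{2}(b),\varphi^{3}(b)$, matching Definition~\ref{d.deletion} applied to $M^{\ast}$. The paper handles the empty-cycle bookkeeping in one sentence by appealing directly to the fourth bullets of the two definitions (new empty cycles arise exactly from cycles of $\varphi=\tau\theta\sigma$ contained in $e$), whereas you propose to read it off from Observations~\ref{obs:deletion} and~\ref{obs:contraction}; both routes work, and your hesitation there is unwarranted.
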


\begin{proof}%[Proof of Proposition~\ref{p.del_dual_cont}]
	We show that $(M/e)^{\ast}=M^{\ast}\backslash e$; the other case follows by the identity $(M^*)^*=M$ directly from the first.	
	Using the notation of Definition~\ref{d.contraction}, 
	if $M=(\theta,\sigma,\tau;C)$ then
	$M/e=(\theta'',\sigma'',\tau'';C'')$, with $C''=C\setminus e$.
%	(by Definition~\ref{d.contraction}.
	Hence, $(M/e)^{\ast}=(\sigma'',\theta'',\tau''\theta''\sigma'';C'')$.
	On the other hand, $M^{\ast}=(\sigma,\theta,\tau\theta\sigma;C)$ by Definition~\ref{d.dual_map}. Therefore 
%	since $e^{\ast}=\{a,\theta a,\sigma a,\theta\sigma a\}=e$, 
	$M^{\ast}\backslash e=(\sigma',\theta',(\tau\theta\sigma)';C')$ with $C'=C''$.
	
%	By Definition~\ref{d.deletion} and Definition~\ref{d.contraction}, 
The permutations $\theta'$ and $\theta''$ are equal as they are both equal to the restriction of $\theta$ to $C\backslash e$, and likewise  $\sigma'=\sigma''$. Thus it remains to show that $(\tau\theta\sigma)'=\tau''\theta''\sigma''$.
	
	For $b\in C\backslash e$, we have $c:=\theta\sigma b =\theta''\sigma'' b\in C\backslash e$. Then we have, by Definition~\ref{d.contraction} applied to $c$,
	\[
	\tau'' c=
	\begin{cases}
	\tau c & \text{ if } \tau c\in C'',\\
	\tau \theta\sigma  \tau c  & \text{ if } \tau c\in \{a,\theta a,\sigma a,  \theta \sigma a\} \text{ and } \tau \theta\sigma \tau c\in C'',\\
	\tau(\theta \sigma  \tau)^2 c & \text{ if } \tau c,\tau\theta\sigma\tau c\in \{a,\sigma a, \theta a, \sigma \theta a\},\\ %\text{ and } \tau( \theta \sigma \tau)^2(c)\in C''.\\
	\end{cases}
	\]
	and, by Definition~\ref{d.deletion} and Definition~\ref{d.dual_map} applied to $b$, %together with Observation~\ref{o.bound_on_iteration},
	\[
	(\tau\theta\sigma)' b=
	\begin{cases}
	(\tau\theta\sigma) b & \text{ if } (\tau\theta\sigma) b\in C'',\\
	(\tau\theta\sigma)^2 b & \text{ if } (\tau\theta\sigma)b\in \{a,\sigma a, \theta a, \theta\sigma a\} \text{ and } (\tau\theta\sigma)^2 b\in C'',\\
	(\tau\theta\sigma)^3 b & \text{ if } (\tau\theta\sigma)b,(\tau\theta\sigma)^2b \in \{a,\sigma a, \theta a, \theta\sigma a\}.\\ %\text{ and } (\tau\theta\sigma)^3(b)\in C''.\\
	\end{cases}
	\]
	Since $c=\theta\sigma b$, we conclude that $(\tau\theta\sigma)'=\tau''\theta''\sigma''$.
	
	Finally, $(\tau\theta\sigma)'$ and $\tau''\theta''\sigma''$ have the same number of empty cycles, since in both cases we add empty cycles for those cycles in $\tau\theta\sigma$ containing crosses exclusively in $\{a,\sigma a, \theta a, \theta\sigma a\}$.
\end{proof}

%\subsubsection{Other properties of deletion and contraction}

%\paragraph{Order is immaterial.}
Just as for ordinary graphs, the order in which edges are contracted and deleted is immaterial.
\begin{lemma}\label{l.order_immaterial}
	Given sets of edges $A,B$ of a map $M$ with $A\cap B=\emptyset$, then $(M/A)\backslash B$ is well defined,  and $(M/A)\backslash B=(M\backslash B)/A$. %That is to say, the order in which the edges are deleted or contracted does not affect the map which is obtained.
\end{lemma}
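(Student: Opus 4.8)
The plan is to reduce the statement to single-edge operations and then to a local analysis of the vertex-rotation permutation $\tau$. First observe that in both Definition~\ref{d.deletion} and Definition~\ref{d.contraction} the permutations $\theta$ and $\sigma$ of the resulting map are merely the restrictions of $\theta$ and $\sigma$ to the smaller cross set, and an empty cycle of the new vertex permutation is added precisely for each vertex of $M$ all of whose incident crosses get removed (such a vertex becomes isolated). Both pieces of data depend only on $M$, $A$ and $B$, and not on any ordering nor on which edges are deleted and which contracted. So the whole lemma reduces to showing that the non-empty cyclic part of $\tau$ on $C\setminus(A\cup B)$ is order-independent, and in particular is the same in $(M/A)\backslash B$ and in $(M\backslash B)/A$.

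Second, I would record two reformulations of the single-edge operations. Directly from Definition~\ref{d.deletion}, deleting an edge $e$ replaces $\tau$ on $C\setminus e$ by the \emph{first-return permutation} of $\tau$ to $C\setminus e$ (from $b\in C\setminus e$ follow $\tau$ until re-entering $C\setminus e$). This construction is transitive: for $C''\subseteq C'\subseteq C$, the first-return to $C''$ of the first-return to $C'$ of $\tau$ equals the first-return of $\tau$ to $C''$, since every point of $C''$ is also a stopping point of the first-return map to $C'$. Hence deletion commutes with deletion and $M\backslash B$ is well defined. Dually, by Proposition~\ref{p.del_dual_cont} contracting $e$ is deleting $e$ in $M^{*}$, so it replaces $\varphi=\tau\theta\sigma$ by the first-return permutation of $\varphi$ to $C\setminus e$; as $(-)^{*}$ is an involution, contraction commutes with contraction and $M/A$ is well defined.

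The crux is the mixed case: for disjoint edges $e$ (contracted) and $f$ (deleted), $(M/e)\backslash f=(M\backslash f)/e$. Here duality only exchanges this for the same type of statement in $M^{*}$, so I would argue directly from Observations~\ref{obs:contraction} and~\ref{obs:deletion}, which say that each operation leaves $\tau$ unchanged except on the cycle-pairs meeting the edge removed, i.e.\ those corresponding to the vertices incident with $e$, respectively with $f$. If $e$ and $f$ have no common endpoint these cycle-pairs are disjoint and the two rewritings trivially commute. If $e$ and $f$ share an endpoint $v$ (and possibly a second one, when they are parallel), the common cycle-pair at $v$ carries crosses of both edges, interleaved in some pattern, and one checks across the finitely many configurations — $e$ a non-loop, a non-twisted loop, or a twisted loop at $v$; likewise $f$; and, for a shared endpoint, the relative cyclic positions of the two edges' crosses — that applying the rewriting rule of Observation~\ref{obs:contraction} for $e$ followed by that of Observation~\ref{obs:deletion} for $f$ yields the same cycle-pairs as the opposite order. (Equivalently, one may write both operations as $\tau\mapsto r\circ\tau$, where $r$ retracts a cross back into $C\setminus e$ along $\varphi$, respectively along $\tau$, and verify that the two retractions commute; the bookkeeping is the same.) This case analysis is the main obstacle: it is not conceptually hard, but the configurations at a shared vertex must be enumerated with care, and the twisted-loop cases in particular require attention to the reversal of the sequences $X,Y$ appearing in Observation~\ref{obs:contraction}.

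Finally, I would deduce the general statement by a sorting argument. With fixed orderings of $A$ and $B$, the list of operations producing $(M/A)\backslash B$ is turned into the list producing $(M\backslash B)/A$ by repeatedly transposing adjacent operations applied to disjoint edges, each such transposition being a deletion--deletion, a contraction--contraction, or a (mixed) contraction--deletion swap, all licensed by the previous paragraphs; the same argument shows that $(M/A)\backslash B$ does not depend on the chosen orderings.
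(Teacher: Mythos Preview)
Your outline is correct and would go through, but the route you propose for the mixed single-edge case is substantially heavier than the paper's. You plan to pass to $\varphi$ via duality for contraction and to $\tau$ directly for deletion, and then to reconcile the two by an explicit case analysis on the configurations at a shared vertex (non-loop, twisted loop, non-twisted loop, for each of $e$ and $f$, and all interleaving patterns). That works, but it is exactly the tedious enumeration you yourself flag as ``the main obstacle.''

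The paper avoids this entirely by \emph{not} passing to $\varphi$ for contraction. Instead it reads Definition~\ref{d.contraction} as an iterative procedure on $\tau$ alone: to compute the image of a cross $c$ after deleting $e$ and contracting $f$, repeatedly set $d:=\tau c$; if $d$ lies in the deleted edge $e$, reapply $\tau$; if $d$ lies in the contracted edge $f$, apply $\tau\theta\sigma$; otherwise output $d$. Because $e\cap f=\emptyset$, the two ``skip'' branches are mutually exclusive, so swapping their order leaves the procedure unchanged. This single observation simultaneously handles all the vertex-sharing configurations that you would otherwise enumerate by hand, and it makes the empty-cycle bookkeeping drop out as well. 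Your first-return/transitivity argument for deletion--deletion and (dually) contraction--contraction, and the final sorting reduction, match the paper's implicit structure; the gain in the paper is purely in the mixed step.
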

\begin{proof}
It suffices to show that that $(M\backslash e)/f=(M/f)\backslash e$ for two distinct edges $e$ and $f$.	
	Let $e=\{a,\theta a,\sigma a, \theta\sigma a\}$ and $f=\{b,\theta b,\sigma b,\theta\sigma b\}$. Then the map after deleting $e$ and contracting $f$ is the map $(\theta''',\sigma''',\tau''';C''')$ in which $C'''=C\setminus \{a,\theta a,\sigma a, \theta\sigma a,b,\theta b,\sigma b, \theta\sigma b\}$ and the permutations are defined as follows. For $c\in C'''$ we have $\theta'''c=\theta c$, $\sigma'''c=\sigma c$. The permutation $\tau'''$ is obtained by the following procedure. For $c \in C'''$,
	\begin{enumerate}
		\item\label{st.1} Set $d:=\tau c$. Move to the next step.
		\item If $d\in\{a,\theta a,\sigma a, \theta\sigma a\}$, then $c:=\tau d$ and move to step \ref{st.1}. Otherwise move to the next step.
		\item If $d\in \{b,\theta b,\sigma b,\theta\sigma b\}$, then $c:=\tau\theta \sigma d$ and move to step \ref{st.1}. Otherwise move to the next step.
		\item Set $\tau'''c:=d$.
	\end{enumerate}
Since $e \cap f = \emptyset$, the conditions of steps 2 and 3 are mutually exclusive. Hence, the order of these steps can be switched without affecting the outcome of the procedure. It follows that the procedure does not depend on whether we delete $e$ first and then contract~$f$ or vice versa. 
\end{proof}

%\paragraph{Effects of edge contraction on map parameters.}
We conclude this section by recording the effect of contracting a non-loop on various map parameters, starting with a result shown by Tutte for orientability.

\begin{lemma}[\cite{tutte01}, Theorem X.26]\label{l.cont_edge_orient}
Let $M$ be a connected map and let $e$ be a non-loop of $M$. 
Then $M$ is orientable if and only if $M/e$ is orientable.
%	If $M$ is a connected non-orientable map, and $e$ is a non-loop, then	$M/e$ is non-orientable.
\end{lemma}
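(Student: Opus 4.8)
The plan is to reduce the lemma to a count of orbits, using Tutte's orientability criterion. Write $\alpha=\theta\sigma$. By Theorem~\ref{theorem:orient}, a connected map $(\theta_0,\sigma_0,\tau_0;C_0)$ is orientable precisely when $\langle\theta_0\sigma_0,\tau_0\rangle$ has exactly two orbits on $C_0$, and non-orientable precisely when it has one. Since $e$ is a non-loop, the underlying graph of $M/e$ is $\Gamma(M)/e$, which is connected, so $M/e$ is connected and the criterion applies to it as well; writing $M/e=(\theta'',\sigma'',\tau'';C'')$ we have $C''=C\setminus e$ and $\theta''\sigma''=\alpha|_{C''}=:\alpha''$. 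It therefore suffices to show that $\langle\alpha'',\tau''\rangle$ has the same number of orbits on $C''$ as $\langle\alpha,\tau\rangle$ has on $C$; this common value is then equal to $1$, or equal to $2$, for $M$ and $M/e$ simultaneously.

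First I would dispose of the case $M=e$: by Euler's formula such an $M$ is the edge on two vertices of degree one embedded in the sphere, hence orientable, while $M/e$ is an isolated vertex, also orientable, so the equivalence holds. Assume from now on that $M\neq e$. I claim every $\langle\alpha,\tau\rangle$-orbit on $C$ meets $C''$. The involution $\alpha$ acts on the four crosses of $e$ as $(a\ \theta\sigma a)(\sigma a\ \theta a)$, and from the shape~\eqref{eq:non-loop} of the two $\tau$-orbits at the distinct endvertices of $e$ one reads off that each of $a,\theta a,\sigma a,\theta\sigma a$ is either fixed by $\tau$ or mapped by $\tau$ outside $e$; hence no $\langle\alpha,\tau\rangle$-orbit is contained in $e$ unless the sequences $X,Y$ of~\eqref{eq:non-loop} are both empty, i.e.\ unless $M=e$. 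So for $M\neq e$ the assignment sending the $\langle\alpha'',\tau''\rangle$-orbit of a cross $b\in C''$ to its $\langle\alpha,\tau\rangle$-orbit is a surjection between the two orbit sets. It is well defined because, by Definition~\ref{d.contraction}, $\tau''b$ always lies in $\{\tau b,\ \tau\alpha\tau b,\ \tau\alpha\tau\alpha\tau b\}$, a word in $\alpha=\theta\sigma$ and $\tau$ applied to $b$, so every $\langle\alpha'',\tau''\rangle$-orbit sits inside a single $\langle\alpha,\tau\rangle$-orbit.

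It remains to prove this surjection is injective: two crosses of $C''$ lying in one $\langle\alpha,\tau\rangle$-orbit of $M$ already lie in one $\langle\alpha'',\tau''\rangle$-orbit of $M/e$. I would take an $\langle\alpha,\tau\rangle$-walk between two such crosses and split it into maximal \emph{excursions} that stay inside $e$ and the complementary subwalks that stay in $C''$. On a $C''$-subwalk, an $\alpha$- or $\tau$-step to another cross of $C''$ agrees with an $\alpha''$- or $\tau''$-step (here $\alpha$ preserves $C''$, and $\tau''b=\tau b$ whenever $\tau b\in C''$), so consecutive crosses are $\langle\alpha'',\tau''\rangle$-equivalent. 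An excursion is entered from $C''$ by a $\tau$-step landing in one of the two $\alpha$-blocks $\{a,\theta\sigma a\}$, $\{\sigma a,\theta a\}$ of $e$; inside $e$ only $\alpha$ moves a cross (by~\eqref{eq:non-loop} the non-trivial $\tau$-steps all leave $e$), so the excursion cannot pass from one $\alpha$-block to the other; it then leaves $e$ by a $\tau$-step into $C''$. Checking the possible entries and, for each, the possible exits against Observation~\ref{obs:contraction} --- which replaces the two $\tau$-orbits at the endvertices of $e$ by the single pair $(X\ Y)(X^{-1}\ Y^{-1})$ --- shows in every case that the entry cross and the exit cross lie in a common $\tau''$-orbit, hence are $\langle\alpha'',\tau''\rangle$-equivalent. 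Concatenating these equivalences along the walk gives injectivity, and with it the equality of orbit counts, from which the lemma follows.

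I expect this last excursion bookkeeping to be the only real obstacle: one must keep straight which crosses belong to which of the $\tau$-orbits in~\eqref{eq:non-loop} --- writing $X=(x_1\ \cdots\ x_p)$, $Y=(y_1\ \cdots\ y_q)$, the entries are the $\tau$-steps $x_p\mapsto a$, $y_q\mapsto\theta\sigma a$, $\sigma x_1\mapsto\sigma a$, $\sigma y_1\mapsto\theta a$ --- and use axiom~(4) of Definition~\ref{def:premap}, which forces $a,\sigma a$ (and likewise $\theta a,\theta\sigma a$) into different $\tau$-cycles; the rest is routine. (One may instead phrase the orbit count graph-theoretically: the $\langle\alpha,\tau\rangle$-orbits are the connected components of the graph on $C$ with an edge $\{c,\alpha c\}$ and an edge $\{c,\tau c\}$ for every $c$, and one checks that removing the four vertices of $e$ while adding the edges of the new $\tau''$-cycles $(X\ Y)$ and $(X^{-1}\ Y^{-1})$ leaves the number of components unchanged.)
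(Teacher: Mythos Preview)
The paper does not give its own proof of this lemma; it simply cites Tutte~\cite[Theorem X.26]{tutte01}. Your proposal therefore cannot be compared to a proof in the paper, but it stands on its own as a correct, self-contained argument built from the paper's ingredients: the orientability criterion of Theorem~\ref{theorem:orient}, the explicit form~\eqref{eq:non-loop} of the $\tau$-cycles at the endvertices of a non-loop, and Observation~\ref{obs:contraction}. The reduction to showing that $\langle\alpha'',\tau''\rangle$ and $\langle\alpha,\tau\rangle$ have the same number of orbits is exactly the right move, and your surjectivity/injectivity split is clean.

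One small point of care in the excursion analysis: a general $\langle\alpha,\tau\rangle$-walk uses $\tau^{-1}$-steps as well as $\tau$-steps, so the list of ``entries'' into $e$ should in principle also include $x_1\mapsto a$, $y_1\mapsto\theta\sigma a$, $\sigma x_p\mapsto\sigma a$, $\sigma y_q\mapsto\theta a$ via $\tau^{-1}$. This does not affect your conclusion, since the set of $C''$-neighbours of each $\alpha$-block $\{a,\theta\sigma a\}$ (respectively $\{\sigma a,\theta a\}$) is still contained in the single $\tau''$-cycle $(X\ Y)$ (respectively $(X^{-1}\ Y^{-1})$); your parenthetical reformulation in terms of the undirected graph on $C$ with edges $\{c,\alpha c\}$ and $\{c,\tau c\}$ makes this symmetric treatment automatic and is probably the tidiest way to present the final step.
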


	\begin{lemma}\label{l.param_edge_cont_maint}
		If $e$ is a non-loop of a map $M$ then $v(M/e)=v(M)-1$, $e(M/e)=e(M)-1$, $f(M/e)=f(M)$, and $k(M/e)=k(M)$.
		In particular for a connected map $M$ we have $\chi(M/e)=\chi(M)$, $g(M/e)=g(M)$, and $s(M/e)=s(M)$.% and $n(M/e)=n(M)$.
		
		Dually, if $e$ is a non-loop in $M^{\ast}$ then $v(M\backslash e)=v(M)$, $e(M\backslash e)=e(M)-1$, $f(M\backslash e)=f(M)-1$, and $k(M\backslash e)=k(M)$. In particular for a connected map $M$ we have $\chi(M\backslash e)=\chi(M)$, $g(M\backslash e)=g(M)$, and $s(M\backslash e)=s(M)$. % and $n(M\backslash e)=n(M)$.
	\end{lemma}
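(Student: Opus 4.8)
The plan is to verify the vertex, edge, face and component counts for non-loop contraction directly from Definition~\ref{d.contraction} and Observation~\ref{obs:contraction}, and then to deduce the statements about $\chi$, $g$ and $s$ for connected maps from Euler's formula~\eqref{eq:Euler} together with Lemma~\ref{l.cont_edge_orient}. The dual statement then follows from Proposition~\ref{p.del_dual_cont}, since a non-loop of $M^\ast$ is an edge whose contraction dualizes to deletion, and $v,e,f,k$ behave under dualization in the complementary way recorded in the remark following Theorem~\ref{theorem:orient}.

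First I would handle $e(M/e)$ and $k(M/e)$: by Definition~\ref{d.contraction} the cross set loses exactly the four crosses $\{a,\theta a,\sigma a,\theta\sigma a\}$, so $e(M/e)=e(M)-1$; and since $\langle\theta,\sigma,\tau\rangle$-orbits are only merged (never split) when we reroute $\tau$ past the deleted crosses, and a non-loop joins two distinct vertices of the same component, $k(M/e)=k(M)$. Next, for $v(M/e)$ I would invoke Observation~\ref{obs:contraction}: for a non-loop the two pairs of $\tau$-orbits~\eqref{eq:non-loop} associated with the two distinct endvertices of $e$ are replaced by the single pair $(X\,Y)(X^{-1}\,Y^{-1})$, and all other $\tau$-orbits are untouched except for the bookkeeping of empty cycles; hence the number of vertices drops by exactly one, $v(M/e)=v(M)-1$. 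For $f(M/e)$ I would pass to the dual: by Proposition~\ref{p.del_dual_cont}, $M/e$ has dual $M^\ast\backslash e$, and $e$ is a non-loop of $M$, which by duality of the incidence structure means $e$ is a non-loop of $M^\ast$ exactly when... — more directly, I would instead argue that deleting a crossing set of size four and rerouting $\varphi=\tau\theta\sigma$ accordingly shows, via the face form of Observation-type reasoning (or simply: contraction of a non-loop merges two vertices and leaves facial walks, hence faces, set-theoretically as multisets of the surviving edges, unchanged), that $f(M/e)=f(M)$; cleanly, $f(M/e)=v((M/e)^\ast)=v(M^\ast\backslash e)=v(M^\ast)=f(M)$, using that deletion never changes the vertex count (stated already after Observation~\ref{obs:deletion}).

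With the four basic parameters in hand, for connected $M$ we get $\chi(M/e)=v(M/e)-e(M/e)+f(M/e)=(v(M)-1)-(e(M)-1)+f(M)=\chi(M)$, and since $M/e$ is connected ($k(M/e)=k(M)=1$), Euler's formula~\eqref{eq:Euler} gives $s(M/e)=s(M)$; Lemma~\ref{l.cont_edge_orient} says $M/e$ is orientable iff $M$ is, so $g(M/e)=g(M)$ as well (the Euler genus determines the genus once orientability is fixed). Finally the dual statement: if $e$ is a non-loop of $M^\ast$, apply the just-proved result to $M^\ast$ and then dualize using Proposition~\ref{p.del_dual_cont} ($(M^\ast/e)^\ast=M\backslash e$) together with $v(N^\ast)=f(N)$, $e(N^\ast)=e(N)$, $f(N^\ast)=v(N)$, $k(N^\ast)=k(N)$ from the remark after Theorem~\ref{theorem:orient}; this yields $v(M\backslash e)=v(M)$, $e(M\backslash e)=e(M)-1$, $f(M\backslash e)=f(M)-1$, $k(M\backslash e)=k(M)$, and the $\chi,g,s$ statements follow as before.

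\textbf{Main obstacle.} The routine but genuinely fiddly part is establishing $v(M/e)=v(M)-1$ purely combinatorially — one must check that the rerouted permutation $\tau''$ really does fuse precisely the two endvertex orbit-pairs of the non-loop into a single orbit-pair of the stated form $(X\,Y)(X^{-1}\,Y^{-1})$, with the $\sigma$-compatibility that certifies it as a genuine vertex, and that no other orbit is affected; this is exactly what Observation~\ref{obs:contraction} packages, so the real work is citing it correctly rather than reproving it. Everything else is bookkeeping with Euler's formula and duality.
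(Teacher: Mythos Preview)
Your proposal is correct and follows essentially the same route as the paper: Observation~\ref{obs:contraction} for $v(M/e)=v(M)-1$, the identity $f(M/e)=v((M/e)^\ast)=v(M^\ast\backslash e)=v(M^\ast)=f(M)$ via Proposition~\ref{p.del_dual_cont} and the triviality $v(M\backslash e)=v(M)$, then Euler's formula plus Lemma~\ref{l.cont_edge_orient} for $\chi,s,g$, and finally dualization for the second half. The only cosmetic difference is your argument for $k(M/e)=k(M)$ via $\langle\theta,\sigma,\tau\rangle$-orbits; the paper instead observes that for a non-loop $e$ the underlying graph of $M/e$ is $\Gamma/e$, whence $k(M/e)=k(\Gamma/e)=k(\Gamma)=k(M)$, which is slightly cleaner than tracking orbit merges.
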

%\textcolor{red}{Guus: I have removed the nullity from the statement as we have not defined it yet}.

\begin{proof}%[Proof of Lemma~\ref{l.param_edge_cont_maint}]
	Let $e = \{a,\theta a,\sigma a, \theta\sigma a\}$ be as in the first assertion. It is clear that $e(M/e)=e(M)-1$.
	Since $e$ is a non-loop each cross belongs to a different cycle of $\tau$. By Observation~\ref{obs:contraction} %Following Definition~\ref{d.contraction}, 
contraction of $e$ merges the cycle containing $a$ with that containing $\theta\sigma a$, and the cycle containing $\theta a$ with the one containing $\sigma a$. 
Thus $v(M/e)=v(M)-1$.
	
The number of connected components of $M$ stays the same upon contracting a non-loop~$e$. Indeed, if $M$ has underlying graph $\Gamma$ then $M/e$ has underlying graph $\Gamma/e$, and so $k(M/e)=k(\Gamma/e)=k(\Gamma)=k(M)$.  

To show that $f(M/e)=f(M)$ we make the straightforward observation that $v(M\backslash e)=v(M)$ and, since $M/e=(M^{\ast}\backslash e)^{\ast}$ by Proposition~\ref{p.del_dual_cont}, we have $f(M/e)=v(M^{\ast}\backslash e)=v(M^\ast)=f(M)$. % and hence the statement about the number of faces follows from the fact that the number of vertices stays the same when removing an edge.

We now have that $\chi(M/e)=\chi(M)$. Euler's formula~\eqref{eq:Euler} then yields that $s(M/e)=s(M)$, and by Lemma~\ref{l.cont_edge_orient} it follows that $g(M/e)=g(M)$, finishing the proof of the first statement.
 
%The statement about the genus and the Euler characteristic now follow by . Finally, the orientability stays the same by Lemma \ref{l.cont_edge_orient}.
	
The dual statement now follows upon applying Proposition~\ref{p.del_dual_cont} and using Observation~\ref{obs:contraction}. 
\end{proof}

Our last technical lemma concerns the Euler genus.

\begin{lemma}\label{lem:genus_submaps} 
For a subset $A$ of edges of a map $M$, 
\begin{equation}\label{equation:sgenus}
s(M\backslash A^c)+s(M/A)\leq s(M),
\end{equation}
where $A^c := E\setminus A$. Furthermore, there is equality in (\ref{equation:sgenus}) if and only if
% \[
% k(M\backslash A^c)\!\!-k(M)\!-\!f(M\setminus A^c)\!\!+k(M/A)=0 \text{ and }k(M/A)\!-k(M)\!-\!v(M/A)+\!k(M\backslash A^c)\!=0.
% \]
\[
k(M\backslash A^c) -k(M) - f(M\backslash A^c) +k(M/A)=0 \text{ and }k(M/A)-k(M)-v(M/A)+k(M\backslash A^c)=0.
\]
\end{lemma}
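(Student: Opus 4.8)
The plan is to recast the claimed (in)equality entirely in terms of the component-counts $k(\cdot)$ and then to prove two dual connectivity inequalities by induction on the number of edges. I would start by applying Euler's formula~\eqref{eq:Euler}, in the form $\chi=2k-s$, to each of $M$, $M\backslash A^c$ and $M/A$, together with the bookkeeping identities $v(M\backslash A^c)=v(M)$, $e(M\backslash A^c)=|A|$, $e(M/A)=|A^c|$, and $f(M/A)=f(M)$. Only the last is not immediate: by Proposition~\ref{p.del_dual_cont} we have $M/A=(M^{\ast}\backslash A)^{\ast}$, so $f(M/A)=v(M^{\ast}\backslash A)=v(M^{\ast})=f(M)$, since deletion preserves the vertex-count and duality interchanges vertices and faces. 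Substituting yields the identity
\[
s(M)-s(M\backslash A^c)-s(M/A)=-(Q_1+Q_2),
\]
where $Q_1:=k(M\backslash A^c)+k(M/A)-k(M)-f(M\backslash A^c)$ and $Q_2:=k(M\backslash A^c)+k(M/A)-k(M)-v(M/A)$. Since the two conditions in the statement are precisely $Q_1=0$ and $Q_2=0$, it suffices to prove $Q_1\le 0$ and $Q_2\le 0$ for every map: then the displayed right-hand side is $\ge 0$, with equality exactly when both $Q_i$ vanish.

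A short computation with Proposition~\ref{p.del_dual_cont} shows that $Q_2$ at $(M,A)$ equals $Q_1$ at $(M^{\ast},A^c)$: one uses $(M\backslash A^c)^{\ast}=M^{\ast}/A^c$, $(M/A)^{\ast}=M^{\ast}\backslash A$, the invariance of $k$ under duality, and $v(M/A)=f((M/A)^{\ast})=f(M^{\ast}\backslash A)$. Hence it is enough to prove
\[
k(M\backslash A^c)+k(M/A)\le k(M)+f(M\backslash A^c)\qquad(\star)
\]
for all maps $M$ and all $A\subseteq E$. I would prove $(\star)$ by induction on $|A^c|$. For $A^c=\emptyset$ we have $M\backslash A^c=M$ and $M/A=M/E=(M^{\ast}\backslash E)^{\ast}$, so $k(M/E)=v(M^{\ast}\backslash E)=v(M^{\ast})=f(M)$ (the map $M^{\ast}\backslash E$ being edgeless), and $(\star)$ holds with equality. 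For the inductive step pick $e\in A^c$ and apply the hypothesis to $(M\backslash e,A)$, whose edge-complement $A^c\setminus e$ is smaller; using $(M\backslash e)\backslash(A^c\setminus e)=M\backslash A^c$ and $(M\backslash e)/A=(M/A)\backslash e$ (Lemma~\ref{l.order_immaterial}) this gives $k(M\backslash A^c)+k((M/A)\backslash e)\le k(M\backslash e)+f(M\backslash A^c)$. Comparing with $(\star)$, it remains to prove that $k(M/A)-k((M/A)\backslash e)\le k(M)-k(M\backslash e)$.

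Both sides of this last inequality lie in $\{0,-1\}$, since deleting an edge of a map raises its component-count by $0$ or $1$, and by $1$ exactly when the edge is a bridge. So the inequality is automatic unless $e$ is a bridge of $M$, in which case it reduces to showing that $e$ is then also a bridge of $M/A$. I would argue this directly: if $e$ is a bridge of $M$ its two endpoints lie in distinct connected components of $M\backslash e$; contracting the edges of $A$ (none of which is $e$) may collapse an edge or split a vertex but never fuses two distinct components, so the two vertices incident with $e$ in $M/A$ still lie in distinct components of $(M\backslash e)/A=(M/A)\backslash e$, that is, $e$ is a bridge of $M/A$. Assembling these facts gives $(\star)$, hence its dual, hence $Q_1,Q_2\le 0$, completing the proof.

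The step I expect to be the main obstacle is precisely this last implication. Because contraction of a \emph{non-twisted} loop of a map does not agree with graph contraction — it creates an extra vertex, and can increase the number of components — one cannot merely quote graph-theoretic facts about $\Gamma/A$; instead the statements that map contraction never fuses two components, and that the crosses of $e$ remain attached to the descendants of the original endpoints of $e$, have to be read off carefully from Observation~\ref{obs:contraction} and the premap definition of incidence. Everything else is Euler-characteristic bookkeeping together with routine use of the deletion/contraction identities already established in this section.
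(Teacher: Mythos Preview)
Your argument is correct, and it shares the opening moves with the paper's own proof: both rewrite $s(M)-s(M\backslash A^c)-s(M/A)$ via Euler's formula as $-(Q_1+Q_2)$, both observe that $Q_1$ and $Q_2$ are exchanged under $(M,A)\mapsto(M^*,A^c)$, and both reduce the lemma to showing a single inequality $Q_i\le 0$.

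Where the two proofs diverge is in how that inequality is established. The paper proves $Q_2\le 0$ by building an explicit ordered partition $A=B_1\cup\cdots\cup B_t$: it takes $B_1$ to be a maximal spanning forest of $M\backslash A^c$, then peels off the remaining edges of $A$ one (or in one subcase two) at a time, checking with Observation~\ref{obs:contraction} that $k(M/A_i)-v(M/A_i)+k(M\backslash A_i^c)$ is non-increasing at each step, via a case split on whether the next edge is a twisted loop, a separating non-twisted loop, a non-twisted loop made into a non-loop by a companion edge in $A$, or otherwise. Your route instead proves $Q_1\le 0$ by induction on $|A^c|$, reducing everything to the single implication ``$e$ a bridge of $M$ $\Rightarrow$ $e$ a bridge of $M/A$''. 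This is genuinely more economical: no spanning forest, no four-way case analysis. The paper's approach, on the other hand, works entirely inside the premap calculus and never needs to track where the endpoints of a fixed edge land after a sequence of contractions.

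On the point you flag as the main obstacle: your instinct is right that this is the only place requiring care, and the tools you name suffice. From Definition~\ref{d.contraction} one reads off that $\tau''b\in\{\tau b,\tau\theta\sigma\tau b,\tau(\theta\sigma\tau)^2 b\}\subseteq\langle\theta,\sigma,\tau\rangle\cdot b$, so every component of $M/f$ (on crosses) is contained in a component of $M$; iterating, the same holds for $M/A$ and for $(M\backslash e)/A$ relative to $M\backslash e$. The cleanest way to finish is then to telescope your target inequality over a one-edge-at-a-time contraction of $A$: for a single $f$, one checks directly from Observation~\ref{obs:contraction} (and the fact that the loop/non-loop and twisted/non-twisted type of $f$ is unaffected by deleting the disjoint edge $e$) that $k((N\backslash e)/f)-k(N\backslash e)\ge k(N/f)-k(N)$ for any map $N$ containing both $e$ and $f$, which yields the bridge-preservation statement without having to trace the endpoints of $e$ explicitly.
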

\noindent We refer to Appendix~\ref{s.proof_s} for a proof.
A simple but useful corollary of Lemma~\ref{lem:genus_submaps} is that neither deletion nor contraction of edges increases the Euler genus.
% $s(M\backslash A^c)\leq s(M)$ and $s(M/A)\leq s(M)$. %: $g(M\backslash e)$ and $g(M/e)$ are bounded above by $g(M)$.

\begin{corollary}\label{l.cor}%Let $M$ be a map.
	For a subset $A$ of edges of a map $M$,
	$s(M\backslash A^c)\leq s(M)$ and $s(M/A)\leq s(M)$.
\end{corollary}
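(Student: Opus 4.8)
The plan is to derive Corollary~\ref{l.cor} as an immediate consequence of Lemma~\ref{lem:genus_submaps}, using only the non-negativity of the Euler genus. Recall that for any map $N$ we have $s(N)\geq 0$, since each connected component of $N$ is embedded in a surface of (orientable or non-orientable) genus at least $0$, and $s$ is defined additively over connected components with $s$ of a sphere-embedded component equal to $0$.

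First I would prove the bound $s(M\backslash A^c)\leq s(M)$. Apply Lemma~\ref{lem:genus_submaps} to the edge subset $A$: this yields
\[
s(M\backslash A^c)+s(M/A)\leq s(M).
\]
Since $s(M/A)\geq 0$, we may drop that term from the left-hand side to obtain $s(M\backslash A^c)\leq s(M)$, as desired.

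Next I would prove $s(M/A)\leq s(M)$. There are two equally short routes. One option is to apply Lemma~\ref{lem:genus_submaps} again to the same subset $A$ and discard the term $s(M\backslash A^c)\geq 0$ instead, giving $s(M/A)\leq s(M)$ directly. Alternatively one can invoke duality: by Proposition~\ref{p.del_dual_cont}, $M/A=(M^\ast\backslash A^c)^\ast$ (applying the proposition edge by edge, using Lemma~\ref{l.order_immaterial} so the order is immaterial), and a map and its dual are embedded in the same surface, hence $s(M/A)=s(M^\ast\backslash A^c)\leq s(M^\ast)=s(M)$ by the first part applied to $M^\ast$. Either way the proof is complete. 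There is no real obstacle here: the only thing one needs beyond Lemma~\ref{lem:genus_submaps} is $s\geq 0$, so the ``main obstacle'' — the genus inequality itself — has already been dispatched in the lemma (whose proof is deferred to the appendix).
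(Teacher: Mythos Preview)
Your proposal is correct and is precisely the argument the paper intends: it states the corollary as an immediate consequence of Lemma~\ref{lem:genus_submaps}, and dropping a nonnegative term from $s(M\backslash A^c)+s(M/A)\leq s(M)$ is exactly what is meant. The duality alternative you mention is also fine, though unnecessary since both inequalities fall out symmetrically from the same lemma.
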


\section{The surface Tutte polynomial}\label{sec:surface_Tutte}

First we recall how the Tutte polynomial of a graph is defined. 

The number of vertices, edges and connected components of a graph 
$\Gamma$ are denoted by $v(\Gamma), e(\Gamma)$ and $k(\Gamma)$ respectively; $r(\Gamma) := v(\Gamma) - k(\Gamma)$ is the rank of $\Gamma$ and $n(\Gamma) := e(\Gamma) - r(\Gamma)$ its nullity.

The Tutte polynomial $T(\Gamma;x,y)$ of a graph $\Gamma=(V,E)$ is given by the following bivariate subgraph expansion
\begin{equation}\label{eq:tutte}
T(\Gamma;x,y) := \sum_{A \subseteq E}(x-1)^{r(\Gamma)-r(\Gamma\backslash A^{c})}(y-1)^{n(\Gamma\backslash A^{c})},
\end{equation}
where $A^{c} := E\setminus A$ is the complement of $A \subseteq E$.

The \emph{rank} and \emph{nullity} of a map $M$ are, similarly as for graphs, defined by 
\[r(M) := v(M) - k(M)\;\text{ and }\;n(M) := e(M) - r(M).\] %with $k(M)$ the number of connected components of the underlying graph of $M$. We define 
The \emph{dual rank} and \emph{dual nullity} are defined by 
\[r^*(M) = f(M) - k(M)\; \text{ and }\;n^*(M) = e(M) - r^*(M).\]
Whereas the rank and nullity of a map coincide with the rank and nullity of its underlying graph, the dual rank and dual nullity implicitly involve the Euler genus. For graphs, rank and nullity are related by (matroid) duality, with $r(\Gamma^*)=n(\Gamma)$ for a planar graph $\Gamma$, while for rank and nullity as map parameters we have by Euler's formula~\eqref{eq:Euler} that $r(M)=n^*(M)-s(M)$ and $n(M)=r^*(M)+s(M)$.

We now have all the definitions needed to introduce the map invariant that is the subject of this paper.
\begin{definition}\label{def:surface_Tutte}
Let $\x = (x;\dots, x_{-2},x_{-1}, x_0,x_1,x_2,\dots)$ and $\y = (y; \dots, y_{-2},y_{-1},y_0,y_1,y_2,\dots)$ %, \xx = (\tilde{x}_1,\tilde{x}_2,...)$ and $\yy = (\tilde{y}_1,\tilde{y}_2,...)$ 
be infinite sequences of indeterminates (variables indexed by $\mathbb Z$ apart from the first). The \emph{surface Tutte polynomial} of a map $M=(V,E,F)$ is the multivariate polynomial 
\begin{equation}\label{eq:surface_Tutte}
\T(M;\x,\y) := \sum_{A \subseteq E}x^{n^*(M/A)}y^{n(M\backslash A^c)} \prod_{\substack{\mathrm{conn. }\text{ }\mathrm{cpts}\\ M_i \text{ }\mathrm{of}\text{ } M/A}}x_{\bar{g}(M_i)} \prod_{\substack{\mathrm{conn. }\text{ }\mathrm{cpts}\\ M_j \text{ }\mathrm{of}\text{ } M\backslash A^c}}y_{\bar{g}(M_j)},
\end{equation}
%\begin{align}
%\T(M;\x,\y,\xx,\yy) := \sum_{A \subseteq E}x^{n^*(M/A)}y^{n(M\backslash A^c)}& \prod_{\substack{\mathrm{orient.} \text{ }\mathrm{conn. }\text{ }\mathrm{cpts}\\ M_i \text{ }\mathrm{of}\text{ } M/A}}x_{g(M_i)} \prod_{\substack{\mathrm{orient.} \text{ }\mathrm{conn. }\text{ }\mathrm{cpts}\\ M_j \text{ }\mathrm{of}\text{ } M\backslash A^c}}y_{g(M_j)}\\
%&\times\prod_{\substack{\mathrm{non-orient.} \text{ }\mathrm{conn. }\text{ }\mathrm{cpts}\\ M_k \text{ }\mathrm{of}\text{ } M/A}}\tilde{x}_{g(M_k)}\prod_{\substack{\mathrm{non-orient.} \text{ }\mathrm{conn. }\text{ }\mathrm{cpts}\\ M_l \text{ }\mathrm{of}\text{ } M\backslash A^c}}\tilde{y}_{g(M_l)}\nonumber,
%\end{align}
where $A^c = E\backslash A$ for $A \subseteq E$ and $\bar{g}(M)$ is the signed genus of $M$ (negative when non-orientably embedded, positive when orientably embedded).
\end{definition}

%We now collect some basic properties of the surface Tutte polynomial. 
For an orientable map $M$ the surface Tutte polynomial of Definition~\ref{def:surface_Tutte} coincides with the polynomial defined in~\cite[Definition 3.9]{goodall16}. % for orientable maps. 
Consequently, by~\cite[Proposition 3.14]{goodall16} the surface Tutte polynomial of a plane map $M$ with underlying graph $\Gamma$  reduces to the Tutte polynomial of $\Gamma$, with
\begin{equation}\label{eq:surface_Tutte_planar}
\T(M;\x,\y)=(x_0y_0)^{k(\Gamma)}T(\Gamma;y_0x+1,x_0y+1).
\end{equation}

\begin{proposition}
The surface Tutte polynomial $\T(M;\x,\y)$ of a map $M$ is multiplicative over the connected components of~$M$.
\end{proposition}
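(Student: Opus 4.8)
The plan is to show that both sides of the defining expansion~\eqref{eq:surface_Tutte} factor the same way when $M$ is a disjoint union $M = M' \sqcup M''$ of maps on disjoint edge sets $E'$ and $E''$. Every subset $A \subseteq E = E' \sqcup E''$ decomposes uniquely as $A = A' \sqcup A''$ with $A' = A \cap E'$ and $A'' = A \cap E''$, and correspondingly $A^c = (A')^c \sqcup (A'')^c$ within $E$. So the sum $\sum_{A \subseteq E}$ splits as $\sum_{A' \subseteq E'} \sum_{A'' \subseteq E''}$, and it remains to check that the summand for $A$ is the product of the summand for $A'$ (computed in $M'$) and the summand for $A''$ (computed in $M''$).

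First I would observe that deletion and contraction are local operations: since $A'$ involves only edges of $M'$ and $A''$ only edges of $M''$, and deletion/contraction of an edge (Definitions~\ref{d.deletion} and~\ref{d.contraction}) modifies only the crosses and $\tau$-cycles of the connected component containing that edge, we have $M/A = (M'/A') \sqcup (M''/A'')$ and $M \backslash A^c = (M' \backslash (A')^c) \sqcup (M'' \backslash (A'')^c)$ as maps (i.e. as premaps up to equivalence). Then I would invoke the additivity of the map parameters over disjoint unions, which is stated in the excerpt right after Euler's formula: $v, e, f, k$ (hence $n^* = e - f + k$ and $n = e - v + k$) are all additive, so $n^*(M/A) = n^*(M'/A') + n^*(M''/A'')$ and $n(M\backslash A^c) = n(M'\backslash (A')^c) + n(M''\backslash (A'')^c)$, making the monomials $x^{n^*(M/A)}$ and $y^{n(M\backslash A^c)}$ factor. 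Finally, the connected components of $M/A$ are precisely the connected components of $M'/A'$ together with those of $M''/A''$, and similarly for $M\backslash A^c$, so the two products $\prod_{M_i} x_{\bar g(M_i)}$ and $\prod_{M_j} y_{\bar g(M_j)}$ split as products over the two pieces. Collecting these factorizations gives that the $A$-summand equals (the $A'$-summand in $\T(M';\x,\y)$) times (the $A''$-summand in $\T(M'';\x,\y)$), and summing over $A', A''$ yields $\T(M;\x,\y) = \T(M';\x,\y)\,\T(M'';\x,\y)$. An induction on the number of connected components then gives the general statement.

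The only point needing a little care — and the closest thing to an obstacle — is the claim that $M/A$ and $M\backslash A^c$ genuinely decompose as the disjoint unions written above when $A$ straddles both components. This is really a matter of checking that Definitions~\ref{d.deletion} and~\ref{d.contraction}, applied to an edge $e$ lying in one connected premap of a disjoint union, leave the other connected premap's permutations (and set of crosses) untouched: the rules for $\theta', \sigma', \tau'$ (resp. $\theta'', \sigma'', \tau''$) only ever look at crosses in the $\tau$-cycles containing crosses of $e$, all of which lie in $e$'s own component by axiom~(5) of Definition~\ref{def:premap} (transitivity within a connected premap). Combined with Lemma~\ref{l.order_immaterial}, which lets us perform the contractions and deletions in any order, this makes the component-wise decomposition routine, and I would state it briefly rather than belabor it.
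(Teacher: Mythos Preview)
Your proof is correct and follows essentially the same approach as the paper's: the paper's proof consists of a single sentence noting that deletion and contraction distribute over disjoint unions, that $n^*$ and $n$ are additive over disjoint unions, and that the connected components of a disjoint union are the disjoint union of the connected components. Your version simply unpacks these three observations in detail (and adds a justification of the first one), so there is nothing to compare.
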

\begin{proof}
Deletion and contraction are distributive over disjoint unions,  the parameters $n^{*}$ and $n$ are additive over disjoint unions, and the connected components of a disjoint union of maps are the disjoint union of the connected components of the maps in the disjoint union. 
\end{proof}

Similarly to the Tutte polynomial under matroid duality, the surface Tutte polynomial $\T$ under map duality involves a simple switch of variables.
\begin{proposition}\label{proposition:duality}
For a map $M$ and its dual $M^*$ we have $\T(M;\x,\y) = \T(M^*;\y,\x)$.
%$\T(M;\x,\y,\xx,\yy) = \T(M^*;\y,\x,\yy,\xx)$.
\end{proposition}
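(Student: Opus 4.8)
The plan is to match the terms of $\T(M;\x,\y)$ and $\T(M^*;\y,\x)$ via the involution $A\mapsto A^c$ on $2^E$ (recall $M$ and $M^*$ have the same edge set). The first step is to upgrade Proposition~\ref{p.del_dual_cont} from single edges to edge sets: for any $A\subseteq E$,
$$(M/A)^{\ast}=M^{\ast}\backslash A\qquad\text{and}\qquad (M\backslash A)^{\ast}=M^{\ast}/A.$$
This follows by induction on $|A|$, peeling off one edge at a time and applying the single-edge case of Proposition~\ref{p.del_dual_cont} together with $(M^*)^*=M$. Writing $B=A^c$ (so $B^c=A$), this gives $M^{\ast}/B=(M\backslash A^c)^{\ast}$ and $M^{\ast}\backslash B^c=(M/A)^{\ast}$.

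Next I would record the elementary duality identities. From $v(N^{\ast})=f(N)$, $f(N^{\ast})=v(N)$, $e(N^{\ast})=e(N)$, $k(N^{\ast})=k(N)$ (the remark after Definition~\ref{d.dual_map}) and the definitions $n(N)=e(N)-v(N)+k(N)$, $n^{\ast}(N)=e(N)-f(N)+k(N)$, one gets $n^{\ast}(N^{\ast})=n(N)$ and $n(N^{\ast})=n^{\ast}(N)$. Hence $n^{\ast}(M^{\ast}/B)=n(M\backslash A^c)$ and $n(M^{\ast}\backslash B^c)=n^{\ast}(M/A)$. Moreover, duality acts componentwise: the connected components of $M^{\ast}/B=(M\backslash A^c)^{\ast}$ are exactly the duals of the connected components $M_i$ of $M\backslash A^c$, and since a connected map and its dual are embedded in the same surface, $\bar g(M_i^{\ast})=\bar g(M_i)$. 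Therefore $\prod_{N_i\text{ of }M^{\ast}/B}y_{\bar g(N_i)}=\prod_{M_i\text{ of }M\backslash A^c}y_{\bar g(M_i)}$, and likewise $\prod_{N_j\text{ of }M^{\ast}\backslash B^c}x_{\bar g(N_j)}=\prod_{M_j\text{ of }M/A}x_{\bar g(M_j)}$.

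Putting these together, the summand of $\T(M^{\ast};\y,\x)$ indexed by $B=A^c$ equals
$$y^{\,n(M\backslash A^c)}\,x^{\,n^{\ast}(M/A)}\prod_{\substack{\mathrm{conn.}\ \mathrm{cpts}\\ M_i\ \mathrm{of}\ M\backslash A^c}}y_{\bar g(M_i)}\prod_{\substack{\mathrm{conn.}\ \mathrm{cpts}\\ M_j\ \mathrm{of}\ M/A}}x_{\bar g(M_j)},$$
which is precisely the summand of $\T(M;\x,\y)$ indexed by $A$. As $A\mapsto A^c$ is a bijection on $2^E$, summing over all $A$ yields $\T(M^{\ast};\y,\x)=\T(M;\x,\y)$. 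The only step needing genuine care is the set-level duality $(M/A)^{\ast}=M^{\ast}\backslash A$ and the fact that duality preserves the decomposition into connected components with their embedding surfaces (hence their signed genera); the exponent identities and the final reindexing are routine bookkeeping.
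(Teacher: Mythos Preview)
Your proof is correct and follows essentially the same approach as the paper's: match terms via the bijection $A\mapsto A^c$, use the deletion/contraction duality $(M/A)^*=M^*\backslash A$ (which the paper invokes directly from Proposition~\ref{p.del_dual_cont}), and the identity $n^*(N^*)=n(N)$. You have simply spelled out details the paper leaves implicit, such as extending Proposition~\ref{p.del_dual_cont} to edge sets and noting that duality preserves the componentwise signed genera.
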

\begin{proof}
This follows since $n^{\ast}(M)=n(M^{\ast})$ and $(M\backslash A)^{\ast}=M^{\ast} / A$ by Proposition~\ref{p.del_dual_cont}, %where we make the usual identification of edges of $M^*$ with edges of $M$, 
and the range of the summation~\eqref{eq:surface_Tutte} defining $\T(M)$ is all subsets of edges, which is closed under complementation.% so can as well range over $A$ or over $A^{c}$, as all possible subsets of edges are considered. 
\end{proof}

%Let $M = (V,E,F)$ be a map. Recall from Definition~\ref{definition:genus} and  Euler's formula~\eqref{eq:Euler} that $s(M)= 2k(M) - \chi(M)$ is the Euler genus of $M$. 
In \cite{butler12} Butler defines an extension of the Krushkal polynomial to graphs embedded in non-orientable surfaces and proves that it contains the Bollob\'as-Riordan polynomial (as defined in~\cite{bollobas02}), the Las~Vergnas polynomial (as defined in~\cite{vergnas80}) and the Tutte polynomial as specializations. Using Butler's definition, the \emph{Krushkal polynomial} of %$\Gamma$ embedded as a map 
a map $M$ is given by % by Butler?
\[
\K(M;x,y,a^{1/2},b^{1/2}) = \sum_{A \subseteq E}(x-1)^{k(M\backslash A^c)-k(M)}y^{n(M\backslash A^c)}a^{s(M/A)/2}b^{s(M\backslash A^c)/2},
\]
in which $s(M)= 2k(M) - \chi(M)$ is the Euler genus of $M$ (as per Definition~\ref{definition:genus} and Euler's formula~\eqref{eq:Euler}).

\begin{proposition} The surface Tutte polynomial specializes to the Krushkal polynomial. For a map $M$, we have that
\[
\K(M;X,Y,A^{1/2},B^{1/2}) = (X-1)^{-k(M)}\T(M;\x,\y),
\]
in which $\x$ and $\y$ are set equal to the following values: $x=1, x_g = A^{g}, y = Y, y_g = (X-1)B^g$ for $g \geq 0$,  and $x_{-g} = A^{g/2}$ and $y_{-g} = (X-1)B^{g/2}$ for $g \geq 1$.
%$\tilde{x}_g = A^{g/2}$ and $\tilde{y}_g = (X-1)B^{g/2}$ for $g \geq 1$.
\end{proposition}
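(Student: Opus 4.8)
The plan is to compare the two subgraph expansions term by term over the common index set $A \subseteq E$. Both $\K(M;X,Y,A^{1/2},B^{1/2})$ and $\T(M;\x,\y)$ are sums over subsets $A$ of the edge set, so it suffices to check that, after the prescribed substitution and multiplication by $(X-1)^{-k(M)}$, the $A$-th summand of $\T$ equals the $A$-th summand of $\K$. First I would write out the $A$-th summand of $\T(M;\x,\y)$ with $x=1$: since $x^{n^*(M/A)}=1^{n^*(M/A)}=1$, this summand is
\[
y^{\,n(M\backslash A^c)}\prod_{M_i\text{ cpt of }M/A} x_{\bar g(M_i)}\;\prod_{M_j\text{ cpt of }M\backslash A^c} y_{\bar g(M_j)}.
\]
The factor $y^{n(M\backslash A^c)}$ becomes $Y^{n(M\backslash A^c)}$, matching the $y^{n(M\backslash A^c)}$ of $\K$ directly.

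Next I would handle the two products using the key identity $\bar g(M')=2s(M')-3g(M')$, equivalently $\bar g(M')=g(M')$ if $M'$ is orientable and $\bar g(M')=-g(M')$ if non-orientable; correspondingly $s(M')=\bar g(M')$ in the orientable case and $s(M')=-\bar g(M')=|\bar g(M')|$ in the non-orientable case. For a connected component $M_i$ of $M/A$: if $M_i$ is orientable with genus $g\ge 0$ then $\bar g(M_i)=g$ and $x_{\bar g(M_i)}=x_g=A^{g}=A^{s(M_i)}$; if $M_i$ is non-orientable with genus $g\ge 1$ then $\bar g(M_i)=-g$ and $x_{\bar g(M_i)}=x_{-g}=A^{g/2}=A^{s(M_i)/2}$. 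In both cases (noting $s(M_i)=2g$ in the orientable case) one gets $x_{\bar g(M_i)}=A^{s(M_i)/2}$. Since $s$ is additive over connected components, $\prod_i x_{\bar g(M_i)}=A^{s(M/A)/2}$, which is exactly the $a^{s(M/A)/2}$ factor of $\K$. The same computation with $y_g=(X-1)B^g$ and $y_{-g}=(X-1)B^{g/2}$ shows that each component $M_j$ of $M\backslash A^c$ contributes $y_{\bar g(M_j)}=(X-1)\,B^{s(M_j)/2}$, so $\prod_j y_{\bar g(M_j)}=(X-1)^{k(M\backslash A^c)}B^{s(M\backslash A^c)/2}$.

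Assembling the pieces, the $A$-th summand of $\T$ under the substitution is
\[
Y^{\,n(M\backslash A^c)}\,A^{s(M/A)/2}\,(X-1)^{k(M\backslash A^c)}\,B^{s(M\backslash A^c)/2},
\]
and multiplying by the global factor $(X-1)^{-k(M)}$ turns $(X-1)^{k(M\backslash A^c)}$ into $(X-1)^{k(M\backslash A^c)-k(M)}$. Comparing with the definition of $\K$, this is precisely the $A$-th summand of $\K(M;X,Y,A^{1/2},B^{1/2})$ with $a=A$, $b=B$. Summing over all $A\subseteq E$ completes the proof.

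I do not expect a genuine obstacle here; the only point requiring a little care is the bookkeeping in the exponent substitution, namely verifying uniformly across the orientable ($s=2g$, $\bar g=g$) and non-orientable ($s=g$, $\bar g=-g$) cases that $x_{\bar g(M_i)}$ equals $A^{s(M_i)/2}$ and $y_{\bar g(M_j)}$ equals $(X-1)B^{s(M_j)/2}$, together with the observation that $k(M\backslash A^c)$ is exactly the number of $(X-1)$-factors produced by the second product. Additivity of $s$ and of $k$ over connected components, and the fact that with $x=1$ the $x^{n^*(M/A)}$ factor disappears, are the remaining routine ingredients.
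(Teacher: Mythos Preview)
Your proof is correct and follows essentially the same term-by-term comparison as the paper's own argument: substitute, observe that each component contributes $A^{s(M_i)/2}$ or $(X-1)B^{s(M_j)/2}$, use additivity of $s$ and count the $(X-1)$ factors via $k(M\backslash A^c)$. There is a harmless slip where you write ``$s(M')=\bar g(M')$ in the orientable case'' and ``$A^{g}=A^{s(M_i)}$'' (it should be $s(M')=2\bar g(M')$ and $A^{s(M_i)/2}$), but you immediately correct this with the parenthetical ``noting $s(M_i)=2g$'' and reach the right conclusion.
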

\begin{proof}
Making the given substitution $x\leftarrow 1, \: y\leftarrow Y$ and 
$$x_g\leftarrow A^g,\quad y_g\leftarrow (X-1)B^g, \quad x_{-g}\leftarrow A^{g/2}, \quad y_{-g}\leftarrow (X\!-\!1)B^{g/2}, \quad\quad\mbox{for $g=0,1,2,\dots$}$$  
in the surface Tutte polynomial 
$$\T(M;\x,\y) = \sum_{A \subseteq E}x^{n^*(M/A)}y^{n(M\backslash A^c)} \prod_{\substack{\mathrm{conn. }\text{ }\mathrm{cpts}\\ M_i \text{ }\mathrm{of}\text{ } M/A}}x_{\bar{g}(M_i)} \prod_{\substack{\mathrm{conn. }\text{ }\mathrm{cpts}\\ M_j \text{ }\mathrm{of}\text{ } M\backslash A^c}}y_{\bar{g}(M_j)},$$
%in which $n^*(M)=e(M)-f(M)+k(M)$, $n(M)=e(M)-v(M)+k(M)$, and $\bar{g}(M)$ is the signed genus of $M$,
gives, using Euler's formula,  the specialization
$$\sum_{A \subseteq E}Y^{n(M\backslash A^c)} \prod_{\substack{\mathrm{conn. }\text{ }\mathrm{cpts}\\ M_i \text{ }\mathrm{of}\text{ } M/A}}A^{s(M_i)/2} \prod_{\substack{\mathrm{conn. }\text{ }\mathrm{cpts}\\ M_j \text{ }\mathrm{of}\text{ } M\backslash A^c}}(X\!-\!1)B^{s(M_j)/2}.$$
Using additivity of the Euler genus over disjoint unions and collecting together the factors in each product this is the same as
$$\sum_{A \subseteq E}Y^{n(M\backslash A^c)}(X-1)^{k(M\backslash A^c)}A^{s(M/A)/2} B^{s(M\backslash A^c)/2}.$$
A comparison with the definition of the Krushkal polynomial above establishes the proposition. 
\end{proof}
As the Krushkal polynomial specializes to the Tutte polynomial, it follows that the surface Tutte polynomial of a map $M$ contains the Tutte polynomial of the underlying graph of $M$ not only when $M$ is plane (equation~\eqref{eq:surface_Tutte_planar}) but for maps in general. For an arbitrary embedding of graph $\Gamma$ in a surface as a map $M$,
$$T(\Gamma;X,Y)=(X-1)^{-k(M)}\mathcal{T}(M;\mathbf x,\mathbf y)$$
in which $\x$ and $\y$ are set equal to the following values: $x=1, y = Y-1$, and $x_g = 1, y_g = X-1$ for $g \in\mathbb Z$.

\section{Enumerating local flows and local tensions}\label{section:flowstensions}
In this section -- returning to the original motivation behind our definition of the surface Tutte polynomial -- we give evaluations of the surface Tutte polynomial that count the number of local flows and local tensions of a map taking non-identity values in any given finite group.

\subsection{Local  flows and local  tensions.}\label{subsection:tensionflow}
We start by defining local flows and tensions for maps represented by triples of permutations on a set of crosses.
\begin{definition}[Local flows and tensions] \label{d.flows_premaps} Let $M=(\theta,\sigma,\tau;C)$ be a map. Let $G$ be a finite group with identity element $1$.

A {\em local $G$-flow} of $M$  is a function $f:C\to G$ with the property that 
\begin{itemize}
\item[(i)] $f(\sigma a)=f(a)^{-1}$ and $f(\theta a)=f(a)$, for each $a\in C$,
\item[(ii)] 
$f(a)f(\tau a)\cdots f(\tau^{-1}a)=1$ for each cycle $(a \; \tau a \; \cdots \; \tau^{-1}a)$ of $\tau$.
\end{itemize}

A {\em local $G$-tension} of $M$  is a function $f:C\to G$ with the property that 
\begin{itemize}
\item[(i)] $f(\theta a)=f(a)^{-1}$ and $f(\sigma a)=f(a)$, for each $a\in C$,
\item[(ii)] 
$f(b)f(\varphi b)\cdots f(\varphi^{-1}b)=1$ for each cycle $( b \; \varphi b \; \cdots \; \varphi^{-1}b)$ of $\varphi:=\tau\theta\sigma.$
\end{itemize}
\end{definition}

By item (1) of Theorem~\ref{thm:Tutte_premap_map}, according to condition (ii) defining a local $G$-flow,  the equation
$$f(a)f(\tau a)\cdots f(\tau^{-1}a)=1$$ 
for the cycle $(a \; \tau a \; \cdots \; \tau^{-1}a)$ of $\tau$ is paired with the equation
$$f(\sigma \tau^{-1}a) \cdots f(\sigma\tau a)f(\sigma a)=1$$
for the cycle $(\sigma \tau^{-1}a \; \cdots \; \sigma\tau a \; \sigma a)$ of $\tau$. By condition (i) defining a local $G$-flow, the latter is equivalent to 
$$f(\tau^{-1}a)^{-1}\cdots f(\tau a)^{-1}f(a)^{-1}=1,$$
that is, the same equation as the former, by taking the inverse of both sides. 

A similar observation holds for the equations defining a local $G$-tension: cycles of $\varphi$ that are paired together (via $\varphi\theta=\theta\varphi^{-1}$) define equations equivalent to each other by taking inverses. 

As $M^*=(\sigma,\theta,\varphi;C)$ is the dual of $M=(\theta,\sigma,\tau;C)$, tension-flow duality is transparent. 
\begin{proposition}\label{proposition:flowdualtension}

A local $G$-tension of a map $M$ is a local $G$-flow of the dual map $M^*$, and conversely. 
\end{proposition}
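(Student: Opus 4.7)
The proof plan is essentially an unpacking of Definitions~\ref{d.dual_map} and~\ref{d.flows_premaps} side by side, checking that the axioms match under the interchange of permutations that defines the dual. There is really no obstacle here; the setup in Definition~\ref{def:premap} was chosen precisely so this works out symbolically.

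First I would write out, verbatim from Definition~\ref{d.flows_premaps}, what it means for $f\colon C\to G$ to be a local $G$-tension of $M=(\theta,\sigma,\tau;C)$: namely, $f(\theta a)=f(a)^{-1}$ and $f(\sigma a)=f(a)$ for every $a\in C$, together with the product condition $f(b)f(\varphi b)\cdots f(\varphi^{-1}b)=1$ around each cycle of $\varphi=\tau\theta\sigma$.

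Next I would specialize the definition of a local $G$-flow to the dual map $M^{\ast}=(\sigma,\theta,\varphi;C)$, where by Definition~\ref{d.dual_map} the three permutations of the premap are $\sigma$, $\theta$, $\varphi$, in that order. Substituting these into the flow axioms of Definition~\ref{d.flows_premaps}, the first-role permutation $\sigma$ (playing the part of $\theta$ in the axioms) combined with the second-role permutation $\theta$ (playing the part of $\sigma$) yields the conditions $f(\theta a)=f(a)^{-1}$ and $f(\sigma a)=f(a)$ for every $a\in C$, and the third-role permutation $\varphi$ gives the product condition $f(b)f(\varphi b)\cdots f(\varphi^{-1}b)=1$ around each cycle of $\varphi$. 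These are literally the three conditions for a local $G$-tension of $M$ that I wrote above. This establishes one direction.

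For the converse, I would invoke the involution $(M^{\ast})^{\ast}=M$, which follows immediately from Definition~\ref{d.dual_map} together with the identities $\theta^2=\sigma^2=\iota$ and $\theta\sigma=\sigma\theta$ in Definition~\ref{def:premap}. Applying the direction just proved to $M^{\ast}$ in place of $M$ then shows that a local $G$-tension of $M^{\ast}$ is a local $G$-flow of $(M^{\ast})^{\ast}=M$, which is exactly the reverse statement we wanted.
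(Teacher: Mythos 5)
Your proposal is correct and takes essentially the same route as the paper, which offers no explicit proof at all: it simply remarks that since $M^{\ast}=(\sigma,\theta,\varphi;C)$, ``tension-flow duality is transparent'', and your side-by-side unpacking of Definitions~\ref{d.dual_map} and~\ref{d.flows_premaps} is precisely that transparency made explicit. One small remark: your converse step via $(M^{\ast})^{\ast}=M$ as written proves that a local $G$-tension of $M^{\ast}$ is a local $G$-flow of $M$, which is not literally the converse of the first assertion (that would be: a local $G$-flow of $M^{\ast}$ is a local $G$-tension of $M$) --- but this is harmless, since your forward step already exhibits the two sets of conditions as \emph{identical}, so the equivalence in both directions is obtained at once.
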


\begin{remark}
Even if the group $G$ is abelian, local $G$-flows on a map are essentially different from $G$-flows on the underlying graph of the map. For instance, a local $G$-flow on the map $M$ from Example \ref{example:standardbouquets} (ii) corresponds precisely to a solution of the equation $x^2 = 1$ in $G$ (where $G$ is written multiplicatively). However, the assignment of any value in $G$ to the loop in the underlying graph of $M$ yields a $G$-flow of the underlying graph.
\end{remark}

\begin{remark}
Usually (see for instance~\cite{bouchet83}) local $G$-flows of a map $M$ are defined in terms of half-edges of $M$ represented as a signed graph, in which an orientation of half-edges is chosen that is compatible with edge signatures. An assignment of elements of $G$ to the edges of the map $M$ as a signed graph with half-edge orientations is a local flow if at each vertex Kirchhoff's law is satisfied. The number of flows is then shown to not depend on the choice of orientation.
That our definition of a local flow is equivalent to this definition can be shown with the help of Theorem~\ref{thm:Tutte_premap_map}. %See Section~\ref{app:bi} of the appendix.
In Remark~\ref{rmk:signed} we briefly discuss local flows for signed graphs taking values in a finite abelian group.
\end{remark}

\subsection{Enumerating local flows and local tensions}
We denote the number of local $G$-flows of $M$ by $q^1_G(M)$ and the number of nowhere-identity local $G$-flows of $M$ by $q_G(M)$.
The number of local $G$-tensions of $M$ is denoted by $p_G^1(M)$ and the number of nowhere-identity local $G$-tensions of $M$ by $p_G(M)$.
Proposition \ref{proposition:flowdualtension} immediately yields the following as a corollary.

\begin{proposition}\label{corollary:flowdualtension}
Let $M$ be a map and $G$ a finite group. Then $q_G(M^*) = p_G(M)$.
\end{proposition}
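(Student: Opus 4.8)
The plan is to read off the identity directly from the two results already established: Proposition~\ref{proposition:flowdualtension}, which says that local $G$-tensions of a map $M$ are precisely local $G$-flows of the dual map $M^*$ (and conversely), and the self-duality of maps, $(M^*)^* = M$. Since these give a bijection between the relevant sets, we only need to check that the bijection preserves the ``nowhere-identity'' property, which is immediate because the underlying set of crosses $C$ is unchanged under dualization.

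First I would recall that $M = (\theta,\sigma,\tau;C)$ has dual $M^* = (\sigma,\theta,\varphi;C)$ with $\varphi = \tau\theta\sigma$ (Definition~\ref{d.dual_map}), so $M$ and $M^*$ are maps on the same set of crosses $C$. By Proposition~\ref{proposition:flowdualtension} applied to $M$, a function $f : C \to G$ is a local $G$-tension of $M$ if and only if it is a local $G$-flow of $M^*$; thus the set of local $G$-tensions of $M$ and the set of local $G$-flows of $M^*$ are literally the same set of functions $C \to G$. A function $f$ has the nowhere-identity property (i.e.\ $f(a) \neq 1$ for all $a \in C$) as a statement purely about $f$ and $C$, with no reference to the permutations; hence $f$ is a nowhere-identity local $G$-tension of $M$ if and only if it is a nowhere-identity local $G$-flow of $M^*$. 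Counting both sides gives $p_G(M) = q_G(M^*)$, which is the claim.

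There is essentially no obstacle here: the statement is labelled a corollary and the work has all been done in Proposition~\ref{proposition:flowdualtension}. The only point worth a sentence of care is that ``nowhere-identity'' is preserved under the identification, but since the identification is the identity map on functions $C \to G$ this is trivial. (Equivalently, one could phrase it via $(M^*)^* = M$: a local $G$-flow of $M^*$ is a local $G$-tension of $(M^*)^* = M$ by the converse direction of Proposition~\ref{proposition:flowdualtension}, but the one-line argument above already suffices.) I would therefore write the proof as a two- or three-sentence deduction from Proposition~\ref{proposition:flowdualtension}.
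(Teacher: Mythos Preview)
Your proposal is correct and matches the paper's approach exactly: the paper simply states that Proposition~\ref{proposition:flowdualtension} immediately yields this as a corollary, and you have spelled out the one-line reason (the bijection is the identity on functions $C\to G$, so the nowhere-identity condition is trivially preserved).
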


To determine the number of local flows, we assume some familiarity with the representation theory of finite groups; see for example~\cite{serre77} for background information. Let $G$ be a finite group and $\rho$ a representation of $G$ with character $\chi_{\rho}$. Define 
\begin{equation}\label{fsindicator}
\FF(\rho) := \frac{1}{|G|}\sum_{g \in G}\chi_{\rho}(g^2),
\end{equation}
the \emph{Frobenius-Schur indicator} (see chapter $4$ of \cite{isaacs94} or the section called `The indicator function' in chapter $23$ of \cite{james01}). Frobenius and Schur~\cite{frob06} proved that when restricted to irreducible representations, $\FF$ only takes values in $\{-1,0,1\}$.
Let $\widehat{G}$ denote the set of (equivalence classes of) irreducible representations of~$G$.

\begin{theorem}\label{theorem:flow_count}
	Let $G$ be a finite group with irreducible representations $\rho$ of dimensions $n_{\rho}$ and let $M=(V,E,F)$ be a map. The number  of nowhere-identity local $G$-flows of $M$ is given~by
% \begin{align}
%q_G(M) &= \sum_{A \subseteq E}(-1)^{|A^c|}|G|^{|A|-|V|}\prod_{\substack{\mathrm{orient.}\hspace{1mm} \mathrm{conn.} \hspace{1mm}\mathrm{cpts} \\ M_i  \hspace{1mm}\mathrm{ of }\hspace{1mm} M\backslash A^c}}\sum_{\rho \in \widehat{G}}n_{\rho}^{\chi(M_i)}\label{eq:flow count}
%\\\
%&\times \prod_{\substack{\mathrm{non-orient.}\hspace{1mm} \mathrm{conn.} \hspace{1mm}\mathrm{cpts} \\ M_j \hspace{1mm} \mathrm{of} \hspace{1mm} M\backslash A^c}}\sum_{\rho \in \widehat{G}}\FF(\rho)^{\chi(M_j)}n_{\rho}^{\chi(M_j)},\nonumber
%\end{align}
	
	 \begin{equation}\label{eq:flow count}
	q_G(M) = \sum_{A \subseteq E}(-1)^{|A^c|}|G|^{|A|-|V|}\prod_{\substack{ \mathrm{orient.}\\
			%\hspace{1mm} 
			\mathrm{conn.} \;\mathrm{cpts} \\  M_i \; \mathrm{ of }\hspace{1mm} M\backslash A^c}}\sum_{\rho \in \widehat{G}}n_{\rho}^{2-2g(M_i)}
\:\prod_{\substack{\mathrm{non-orient.}\\
		%\hspace{1mm} 
		\mathrm{conn.} \;\mathrm{cpts} \\ M_j \; \mathrm{of} \; M\backslash A^c}}\sum_{\rho \in \widehat{G}}\FF(\rho)\matchsupheight^{g(M_j)}n_{\rho}^{2-g(M_j)}.
	%\nonumber
	\end{equation}
	%where $A^c = E\setminus A$, and $\chi$ is the Euler characteristic (that is, $\chi(M_i) = 2-2g(M_i)$ for connected orientable map $M_i$ and $\chi(M_j) = 2-g(M_j)$ for connected non-orientable map $M_j$). \textcolor{red}{L: Now that these are definied, perhaps killing or rewriting?}
	\end{theorem}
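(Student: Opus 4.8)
The plan is to count nowhere-identity local $G$-flows by inclusion-exclusion over the subset $A^c$ of edges forced to carry the identity, reducing the problem to counting \emph{all} local $G$-flows on submaps, and then to evaluate the latter count componentwise using the combinatorial classification of surfaces together with a homomorphism-counting formula for surface groups. First I would observe that setting $f$ to the identity on an edge $e$ and then ``forgetting'' $e$ is precisely compatible with deletion as in Definition~\ref{d.deletion}: condition (i) is vacuous on a removed edge, and condition (ii) for the merged $\tau'$-cycles of $M\backslash e$ is obtained from the $\tau$-cycles of $M$ by dropping the identity factors $f(a)=f(\theta a)=1$ and $f(\sigma a)=1$. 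Hence, writing $q^1_G(M\backslash A^c)$ for the number of local $G$-flows of $M\backslash A^c$, standard Möbius inversion over the Boolean lattice of subsets of $E$ gives
\[
q_G(M)=\sum_{A\subseteq E}(-1)^{|A^c|}\,q^1_G(M\backslash A^c).
\]
Since $q^1_G$ is multiplicative over connected components (the crosses and $\tau$-cycles of a disjoint union are the disjoint union of those of the components), it remains to prove, for each connected map $N$, the product formula
\[
q^1_G(N)=|G|^{e(N)-v(N)}\sum_{\rho\in\widehat G}n_\rho^{\,2-2g(N)}\quad\text{if $N$ is orientable,}\qquad
q^1_G(N)=|G|^{e(N)-v(N)}\sum_{\rho\in\widehat G}\FF(\rho)^{g(N)}n_\rho^{\,2-g(N)}\quad\text{if $N$ is non-orientable.}
\]

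To establish this, I would first pass to a ``reduced'' description of a local $G$-flow. By condition (i), the value of $f$ on an edge-orbit $\{a,\theta a,\sigma a,\theta\sigma a\}$ is determined by a single group element $f(a)$ (with the convention $f(\sigma a)=f(a)^{-1}$), so a local flow is a labelling of the $e(N)$ edges by $G$ subject to the $2v(N)$ vertex relations of condition (ii) — which come in inverse pairs, one genuine relation per vertex, i.e.\ $v(N)$ relations, the classical ``product of labels around each vertex, read in the rotation, equals $1$''. Exactly as in the genus-$0$ case this says the labelling factors through the fundamental group: choosing a spanning tree of $\Gamma(N)$ lets one contract all tree edges (using Lemma~\ref{l.cont_edge_orient} and Lemma~\ref{l.param_edge_cont_maint}, which preserve genus, orientability and $e-v$), reducing to a one-vertex map — a bouquet — where the single vertex relation is a word in the $e(N)-v(N)+1$ remaining loop labels. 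Here the key structural input is the combinatorial classification theorem for compact surfaces (for which the excerpt cites \cite{gaxu13}): after further automorphism of the bouquet the vertex word can be put in the standard form $\prod_{i=1}^{h}[x_i,y_i]$ for an orientable surface of genus $g=h$, or $\prod_{i=1}^{g}z_i^2$ for a non-orientable surface of genus $g$. Thus $q^1_G(N)$ equals $|G|^{(\text{number of ``free'' loops})}$ times $|\mathrm{Hom}(\pi_1(\Sigma_N),G)|$, where $\pi_1(\Sigma_N)$ is the surface group of the relevant closed surface; counting the exponent of $|G|$ is a bookkeeping exercise with Euler's formula~\eqref{eq:Euler} giving precisely $|G|^{e(N)-v(N)}$ after accounting for the standard generators.

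Finally I would invoke the homomorphism-counting formula for surface groups, whose proof the authors defer to Appendix~\ref{sec:proof of flow count}: for the orientable genus-$g$ surface group one has $|\mathrm{Hom}(\pi_1,G)|=|G|^{2g-1}\sum_{\rho\in\widehat G}n_\rho^{\,2-2g}$ (the Frobenius formula, via the orthogonality relations applied to the class-function $g\mapsto |\{(x_i,y_i):\prod[x_i,y_i]=g\}|$), and for the non-orientable genus-$g$ surface group the Frobenius--Schur refinement $|\mathrm{Hom}(\pi_1,G)|=|G|^{g-1}\sum_{\rho\in\widehat G}\FF(\rho)^{g}n_\rho^{\,2-g}$, where the Frobenius--Schur indicator~\eqref{fsindicator} enters exactly because the defining word is a product of \emph{squares} $z_i^2$ rather than commutators. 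Matching the powers of $|G|$ coming from the free loops with these formulas yields the claimed per-component expressions, and substituting back into the inclusion-exclusion sum, with $M\backslash A^c$ split into its orientable and non-orientable connected components (each contributing the appropriate factor and $|A|-|V|$ aggregating the $|G|$-exponents since $v(M\backslash A^c)=v(M)=|V|$ and $e(M\backslash A^c)=|A|$), gives formula~\eqref{eq:flow count}. The main obstacle is the second paragraph: making rigorous, in the premap language, that contracting a spanning tree and normalizing the resulting one-vertex map realizes \emph{exactly} the standard surface-group presentation with the Euler-genus bookkeeping working out — i.e.\ that the genus of the embedding surface of $N$ (as defined via Definition~\ref{definition:genus} and Euler's formula) coincides with the genus in the classification-theorem normal form — and that this can be done while controlling orientability via Lemma~\ref{l.cont_edge_orient}; the representation-theoretic input, by contrast, is a clean black box.
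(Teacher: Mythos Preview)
Your proposal is correct and follows the same overall architecture as the paper: inclusion--exclusion to reduce nowhere-identity flows to all flows on submaps, multiplicativity over components, reduction of a connected map to a standard form via the classification of surfaces, and finally the Frobenius/Frobenius--Schur homomorphism-counting formula for surface groups (this last being exactly Theorem~\ref{thm:flows standard bouquets}, proved in Appendix~\ref{sec:proof of flow count}).

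The execution differs in two ways. First, the paper dualizes at the outset: it proves the statement for nowhere-identity local $G$-\emph{tensions} (Theorem~\ref{theorem:tensioncount} for the ``all tensions'' count on a connected map) and then invokes Proposition~\ref{corollary:flowdualtension}; you instead work with flows throughout. These are equivalent under $M\leftrightarrow M^*$, so this is largely cosmetic --- your $|G|^{e(N)-v(N)}$ is their $|G|^{n^*(N)-1}$ on the dual side. Second, and more substantively, the paper does \emph{not} contract a spanning tree and then ``normalize the bouquet by an automorphism'' as you propose. Instead it passes to the cell-complex formalism of \cite{gaxu13}, and the classification theorem there (Theorem~\ref{theorem:class}) reduces an arbitrary map to a standard bouquet via the elementary subdivisions (P1) (edge subdivision) and (P2) (face splitting) and their inverses. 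The bookkeeping you flag as the ``main obstacle'' is then exactly Lemma~\ref{lemma:gcount}, which shows that each of (P1), (P2) preserves $|G|^{f-e}p^1_G$; this is a two-line check in the cell-complex language and replaces both your spanning-tree contraction and your vaguer ``further automorphism'' step in one stroke. Your spanning-tree contraction is a legitimate first move (and your verification that contracting a non-loop preserves $q_G^1$ is correct), but it still leaves a non-standard bouquet, and putting the vertex word into standard form is \emph{not} realized by an automorphism of the bouquet: the classification moves genuinely change the number of edges and faces, which is why one needs an invariant like $|G|^{f-e}p_G^1$ (or dually $|G|^{v-e}q_G^1$) rather than $p_G^1$ itself. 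The paper's route via tensions and cell complexes is what makes this step clean.
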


A brief sketch of the proof of Theorem~\ref{theorem:flow_count} goes as follows. First we use Proposition~\ref{corollary:flowdualtension} to reduce the problem of finding the number of local flows for maps to finding the number of local tensions, which will turn out more convenient. Considering the cell complex associated to a (connected) map allows us to use a combinatorial version of the classification theorem for compact surfaces. We are then faced with finding the number of local tensions for `standard bouquets', which are a special type of map on one vertex in which the loops are arranged in a simple way. Counting the number of local tensions for standard bouquets is the content of Theorem~\ref{thm:flows standard bouquets}. Finally, the inclusion-exclusion principle is used to determine the number of nowhere-identity local tensions. % from the general count Theorem~\ref{thm:flows standard bouquets}.
The exact statements and details of the argument can be found in Section~\ref{subsec:proof of eval}; the more representation-theoretic proof is postponed to Appendix~\ref{sec:proof of flow count} (proof of Theorem~\ref{thm:flows standard bouquets}). 

\subsection{Corollaries to Theorem~\ref{theorem:flow_count}}\label{section:corollariesofflowcount}

Theorem~\ref{theorem:flow_count} implies that the number of nowhere-identity local $G$-flows and, by Propositions~\ref{corollary:flowdualtension} and~\ref{proposition:duality}, the number of nowhere-identity local $G$-tensions can be found as evaluations of the surface Tutte polynomial.
	
	\begin{corollary}\label{t.tutte_flow_evaluation}
		Let $G$ be a finite group with irreducible representations $\rho$ of dimensions $n_{\rho}$ and let $M$ be a map.
	The number  of nowhere-identity local $G$-flows of $M$ is given by
		\begin{equation}\label{equation:numberflow}
		q_G(M) = (-1)^{e(M)-v(M)}\T(M;\x,\y),
		\end{equation}
		in which $\x$ and $\y$ are set equal to the following values:
		$x = 1, y= -|G|$; $x_g = 1$ and $y_g = -|G|^{-1}\sum_{\rho \in \widehat{G}}n_{\rho}^{2-2g}$ for $g \geq 0$; and $x_{-g} = 1$  and $y_{-g} = -|G|^{-1}\sum_{\rho \in \widehat{G}}\FF(\rho)\matchsupheight^gn_{\rho}^{2-g}$ for~$g \geq 1$.\\
		\indent The number of nowhere-identity local $G$-tensions of $M$ %as an evaluation of the surface Tutte polynomial 
is given by
		\begin{equation}\label{equation:numbertension}
		p_G(M) =  (-1)^{e(M)-f(M)}\T(M;\y,\x),
		\end{equation}
		with the same $\x$ and $\y$ as above.
		\end{corollary}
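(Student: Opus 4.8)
The plan is to derive Corollary~\ref{t.tutte_flow_evaluation} directly from Theorem~\ref{theorem:flow_count} by matching the right-hand side of~\eqref{eq:flow count} term-by-term against the subgraph expansion~\eqref{eq:surface_Tutte} defining $\T(M;\x,\y)$, after the indicated substitution of variables. First I would recall from Definition~\ref{def:surface_Tutte} that
\[
\T(M;\x,\y)=\sum_{A\subseteq E}x^{n^*(M/A)}y^{n(M\backslash A^c)}\prod_{M_i\text{ cpt of }M/A}x_{\bar g(M_i)}\prod_{M_j\text{ cpt of }M\backslash A^c}y_{\bar g(M_j)}.
\]
Under the substitution $x=1$, $x_g=1$ for all $g\in\mathbb Z$, all factors coming from the contracted map $M/A$ collapse to $1$, so only the data of $M\backslash A^c$ survives, exactly as on the right-hand side of~\eqref{eq:flow count}. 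It then remains to reconcile (a) the power of $y$ with the power of $-|G|$; (b) the per-component factors $y_{\bar g(M_j)}$ with the per-component character sums; and (c) the global sign and normalization $(-1)^{e(M)-v(M)}$ versus $(-1)^{|A^c|}|G|^{|A|-|V|}$.

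For (a) and (c), the key identity is the following exponent bookkeeping. Writing $A^c=E\setminus A$, we have $|A^c|=e(M)-|A|$, and the nullity satisfies $n(M\backslash A^c)=e(M\backslash A^c)-v(M\backslash A^c)+k(M\backslash A^c)=|A|-|V|+k(M\backslash A^c)$ (using $e(M\backslash A^c)=|A|$ and $v(M\backslash A^c)=v(M)=|V|$). Hence with $y=-|G|$ we get $y^{n(M\backslash A^c)}=(-1)^{n(M\backslash A^c)}|G|^{|A|-|V|+k(M\backslash A^c)}$. The factor $|G|^{|A|-|V|}$ in~\eqref{eq:flow count} is thus accounted for, leaving a residual $|G|^{k(M\backslash A^c)}$, which distributes one factor of $|G|$ to each connected component $M_j$ of $M\backslash A^c$; this is precisely what converts each $-|G|^{-1}\sum_\rho(\cdots)$ appearing in the claimed values of $y_g$ and $y_{-g}$ into the bare $\sum_\rho(\cdots)$ of Theorem~\ref{theorem:flow_count}. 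The signs likewise reorganize: $(-1)^{|A^c|}=(-1)^{e(M)-|A|}$, whereas $(-1)^{n(M\backslash A^c)}\cdot\prod_j(-1)=(-1)^{|A|-|V|+k(M\backslash A^c)}(-1)^{k(M\backslash A^c)}=(-1)^{|A|-|V|}=(-1)^{|A|+|V|}$; multiplying by the global prefactor $(-1)^{e(M)-v(M)}=(-1)^{e(M)+|V|}$ gives $(-1)^{e(M)+|A|}=(-1)^{e(M)-|A|}=(-1)^{|A^c|}$, matching~\eqref{eq:flow count}. I would present this sign/normalization check as the one slightly fiddly computation; it is the main thing to get right, though it is entirely routine parity arithmetic and the $(-1)$ from each $y_g$-factor is exactly engineered to cancel the per-component $|G|$.

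For (b), the point is simply that $\bar g(M_j)$, the signed genus, equals $g(M_j)$ when $M_j$ is orientably embedded and $-g(M_j)$ when non-orientably embedded (Definition~\ref{definition:genus}). So a component $M_j$ of $M\backslash A^c$ with orientable genus $g$ contributes the factor $y_g=-|G|^{-1}\sum_{\rho\in\widehat G}n_\rho^{2-2g}$, and after absorbing the $-|G|$ discussed above this is $\sum_{\rho\in\widehat G}n_\rho^{2-2g(M_j)}$, the orientable factor in~\eqref{eq:flow count}; a non-orientable component of genus $g$ contributes $y_{-g}=-|G|^{-1}\sum_{\rho\in\widehat G}\mathfrak f(\rho)^g n_\rho^{2-g}$, which after the same absorption is $\sum_{\rho\in\widehat G}\FF(\rho)^{g(M_j)}n_\rho^{2-g(M_j)}$, the non-orientable factor. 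Collecting everything, term by term over $A\subseteq E$, the specialized $\T(M;\x,\y)$ equals $(-1)^{e(M)-v(M)}q_G(M)$, which is~\eqref{equation:numberflow}.

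Finally, the tension statement~\eqref{equation:numbertension} follows formally with no extra work: by Proposition~\ref{corollary:flowdualtension}, $p_G(M)=q_G(M^*)$, and by Proposition~\ref{proposition:duality}, $\T(M^*;\x,\y)=\T(M;\y,\x)$. Applying~\eqref{equation:numberflow} to $M^*$ and using $v(M^*)=f(M)$ and $e(M^*)=e(M)$ turns $(-1)^{e(M^*)-v(M^*)}\T(M^*;\x,\y)$ into $(-1)^{e(M)-f(M)}\T(M;\y,\x)$, which is exactly~\eqref{equation:numbertension} with the same $\x,\y$. I do not anticipate any genuine obstacle here; the only care needed is the exponent/sign reconciliation in step (a)/(c), and checking that the definition of $n^*(M/A)$ plays no role once $x=1$ and all $x_g=1$ (it does not, since $x^{n^*(M/A)}=1$ and each $x_{\bar g(M_i)}=1$).
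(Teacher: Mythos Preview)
Your proposal is correct and follows essentially the same approach the paper takes: derive~\eqref{equation:numberflow} by matching the specialized expansion~\eqref{eq:surface_Tutte} against the formula~\eqref{eq:flow count} of Theorem~\ref{theorem:flow_count}, and then obtain~\eqref{equation:numbertension} by combining Proposition~\ref{corollary:flowdualtension} with Proposition~\ref{proposition:duality}. The paper in fact leaves the exponent and sign bookkeeping implicit, so your explicit verification of steps (a)--(c) is exactly the computation the reader is meant to supply.
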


The number of nowhere-identity local $G$-flows of a map with $G$  equal to the dihedral group of order eight and with $G$ equal to the quaternion group (also of order eight) coincide, as shown by DeVos~\cite{devos00}. Theorem~\ref{theorem:flow_count}, however, shows that this does not extend to non-orientable maps.
\begin{example}
Let $D_8$ denote the dihedral group of order eight and let $Q_8$ denote the quaternion group. DeVos~\cite{devos00} showed directly that for orientable maps the number of nowhere-identity local $D_8$-flows equals the number of nowhere-identity local $Q_8$-flows. In~\cite{goodall16} and~\cite{litjens17} it was observed that this follows from the formula in equation~\eqref{equation:orientedcase}, as the multiset of dimensions of irreducible representations of $D_8$ and $Q_8$ agree. However, the types of representations according to the Frobenius-Schur indicator differ: $\FF(\rho) = 1$ for every $\rho \in \widehat{D_8}$, while the group $Q_8$ has a unique $2$-dimensional representation $\rho \in \widehat{Q_8}$ for which $\FF(\rho) = -1$. Therefore for a non-orientable map the number of nowhere-identity local $D_8$-flows may differ from the number of nowhere-identity local $Q_8$-flows. An example is given by the standard bouquet (see Definition~\ref{definition:standardbouquet} below or Example~\ref{example:standardbouquets} (ii)) embedded in the projective plane.
\end{example}

The general expression~\eqref{eq:flow count} for the number of nowhere-identity $G$-flows of $M$ in Theorem~\ref{theorem:flow_count} takes a simpler form when restricting $G$ or $M$ to certain classes. We highlight three cases in the following series of corollaries. 

\begin{corollary}
Let $G$ be a finite group for which $\mathcal F(\rho)\neq 0$ for each $\rho\in\widehat{G}$ (for instance, all symmetric groups have this property) and let $M=(V,E,F)$ be a map. Then
\[
q_G(M) = \sum_{A \subseteq E}(-1)^{|A^c|}|G|^{|A|-|V|}\prod_{\substack{\mathrm{conn.} \hspace{1mm}\mathrm{cpts} \\ M_i  \hspace{1mm}\mathrm{ of }\hspace{1mm} M\backslash A^c}}\Big(\sum_{\rho \in \widehat{G}}\FF(\rho)^{\chi(M_i)}n_{\rho}^{\chi(M_i)}\Big).
\]
\end{corollary}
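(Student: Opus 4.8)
The claim is that if $\mathcal F(\rho)\neq 0$ for every $\rho\in\widehat G$, then the general flow-count formula \eqref{eq:flow count} collapses into a single product over \emph{all} connected components (orientable and non-orientable alike) of the factor $\sum_{\rho\in\widehat G}\mathcal F(\rho)^{\chi(M_i)}n_\rho^{\chi(M_i)}$. So the work is entirely local: I would fix a connected component $M_i$ of $M\backslash A^c$ and show that the factor assigned to it in \eqref{eq:flow count} equals $\sum_{\rho\in\widehat G}\mathcal F(\rho)^{\chi(M_i)}n_\rho^{\chi(M_i)}$ in each of the two cases, then reassemble the sum over $A\subseteq E$ unchanged.

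First I would treat the orientable case. If $M_i$ is orientable of genus $g=g(M_i)$, then $\chi(M_i)=2-2g$ by Euler's formula (for a connected map $\chi=2-s$ and $s=2g$ when orientable). The factor in \eqref{eq:flow count} is $\sum_{\rho}n_\rho^{2-2g}=\sum_\rho n_\rho^{\chi(M_i)}$, and since $\mathcal F(\rho)\in\{-1,0,1\}$ with the hypothesis forcing $\mathcal F(\rho)\in\{-1,1\}$, we have $\mathcal F(\rho)^{\chi(M_i)}=\mathcal F(\rho)^{2-2g}=1$ (an even power of $\pm 1$). Hence $\sum_\rho n_\rho^{\chi(M_i)}=\sum_\rho\mathcal F(\rho)^{\chi(M_i)}n_\rho^{\chi(M_i)}$, which is the desired uniform expression. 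Next, the non-orientable case: if $M_i$ is non-orientable of non-orientable genus $g=g(M_i)\geq 1$, then $s(M_i)=g$ and $\chi(M_i)=2-g$. The factor in \eqref{eq:flow count} is $\sum_\rho\mathcal F(\rho)^{g}n_\rho^{2-g}$; since $g=2-\chi(M_i)$ and $n_\rho^{2-g}=n_\rho^{\chi(M_i)}$, and since $\mathcal F(\rho)^g=\mathcal F(\rho)^{2-\chi(M_i)}=\mathcal F(\rho)^{-\chi(M_i)}=\mathcal F(\rho)^{\chi(M_i)}$ (again using that $\mathcal F(\rho)=\pm 1$, so $\mathcal F(\rho)^2=1$ and inverse equals the value itself), the factor equals $\sum_\rho\mathcal F(\rho)^{\chi(M_i)}n_\rho^{\chi(M_i)}$. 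So in both cases the per-component factor is exactly $\sum_\rho\mathcal F(\rho)^{\chi(M_i)}n_\rho^{\chi(M_i)}$.

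Finally I would substitute these identical expressions back into \eqref{eq:flow count}: the two products (over orientable and over non-orientable components of $M\backslash A^c$) merge into one product over all connected components $M_i$ of $M\backslash A^c$, yielding
\[
q_G(M)=\sum_{A\subseteq E}(-1)^{|A^c|}|G|^{|A|-|V|}\prod_{\substack{\mathrm{conn.}\ \mathrm{cpts}\\ M_i\ \mathrm{of}\ M\backslash A^c}}\Big(\sum_{\rho\in\widehat G}\mathcal F(\rho)^{\chi(M_i)}n_\rho^{\chi(M_i)}\Big),
\]
as claimed. The parenthetical remark that symmetric groups satisfy the hypothesis is standard: every irreducible representation of $S_n$ is realizable over $\mathbb Q$, hence is of real type, so $\mathcal F(\rho)=1$ for all $\rho\in\widehat{S_n}$; I would cite this rather than prove it.

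\textbf{Main obstacle.} There is essentially no obstacle here — the proof is a bookkeeping exercise translating genus to Euler characteristic via Euler's formula and exploiting $\mathcal F(\rho)^2=1$ under the stated nonvanishing hypothesis. The only point requiring a moment's care is making sure the exponent identities $\mathcal F(\rho)^{2-2g}=1$ and $\mathcal F(\rho)^{g}=\mathcal F(\rho)^{2-\chi}=\mathcal F(\rho)^{\chi}$ are applied with the correct sign conventions for orientable versus non-orientable genus (Definition~\ref{definition:genus}), and recalling that the hypothesis $\mathcal F(\rho)\neq 0$ is exactly what rules out the degenerate case $\mathcal F(\rho)=0$ where $\mathcal F(\rho)^{\chi(M_i)}$ would be ill-behaved for $\chi(M_i)\leq 0$; with that hypothesis in force the manipulation is valid for every component regardless of its Euler characteristic.
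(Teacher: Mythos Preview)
Your proof is correct and follows essentially the same approach as the paper: both use that the hypothesis forces $\mathcal F(\rho)\in\{-1,+1\}$, so $\mathcal F(\rho)^{2-2g(M_i)}=1$ for orientable components, allowing the orientable and non-orientable factors in \eqref{eq:flow count} to be written uniformly as $\sum_{\rho}\mathcal F(\rho)^{\chi(M_i)}n_\rho^{\chi(M_i)}$. Your write-up is more detailed than the paper's (which dispatches the corollary in one sentence), but the idea is identical.
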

\begin{proof}
In  equation~\eqref{eq:flow count}, if $\mathcal F(\rho)\neq 0$ for each $\rho\in\widehat{G}$ then $\mathcal{F}(\rho)^{2-2g(M_i)}=1$ for an orientable connected component $M_i$ of a submap of $M$, as $\mathcal{F}(\rho)\in\{-1,+1\}$ for each $\rho\in\widehat{G}$.% and $\chi(M_i)$ is even.  
%$\sum_{\rho \in \widehat{G}}n_{\rho}^{\chi(M)} = \sum_{\rho \in \widehat{G}}\FF(\rho)^{\chi(M)}n_{\rho}^{\chi(M)}$ for every orientable connected map $M$, .% it holds that $\FF(\rho)^{\chi(M)} = 1$ for all $\rho \in \widehat{G}$.
\end{proof}

If the map $M$ is orientable then for every subset $A$ of the edges of $M$ each connected component of $M\backslash A^c$ is orientable as well and the general expression~\eqref{eq:flow count} for the number of nowhere-identity $G$-flows of $M$ in Theorem~\ref{theorem:flow_count} takes a simpler form, as does its proof (for which see~\cite{goodall16} and~\cite{litjens17}).

\begin{corollary}
	Let $G$ be a finite group and let $M=(V,E,F)$ be an orientable map. The number of nowhere-identity local $G$-flows of $M$ is given by
	\begin{equation}\label{equation:orientedcase}
	q_G(M) = \sum_{A \subseteq E}(-1)^{|A^c|}|G|^{|A|-|V|}\prod_{\substack{\mathrm{conn.} \hspace{1mm}\mathrm{cpts} \\ M_i  \hspace{1mm}\mathrm{ of }\hspace{1mm} M\backslash A^c}}\Big(\sum_{\rho \in \widehat{G}}n_{\rho}^{2-2g(M_i)}\Big).
	\end{equation}
%	a result that has appeared in~\cite{goodall16} and~\cite{litjens17}.
\end{corollary}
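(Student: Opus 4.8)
The plan is to derive this corollary directly from Theorem~\ref{theorem:flow_count} by specializing to the orientable case. First I would observe that if $M$ is an orientable map, then for every subset $A\subseteq E$, every connected component of the submap $M\backslash A^c$ is again orientable. This is the one substantive point: deletion of an edge never turns an orientable map into a non-orientable one. Indeed, a connected component of $M\backslash A^c$ is obtained from a connected component $M_0$ of $M$ by deleting some edges; deleting edges can only decrease the Euler genus (Corollary~\ref{l.cor}), and since $M_0$ is orientable with $\bar g(M_0)=g(M_0)\ge 0$, the resulting submaps embed in surfaces of no greater Euler genus. One then argues that such a submap cannot be non-orientable: a non-orientable connected submap would have a connected premap whose group $\langle\theta\sigma,\tau'\rangle$ acts transitively (Theorem~\ref{theorem:orient}), but this would contradict the existence of the two-orbit splitting inherited from the orientable ambient map. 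Alternatively, and more cleanly, one can cite that deletion is dual to contraction (Proposition~\ref{p.del_dual_cont}) together with Lemma~\ref{l.cont_edge_orient} for contraction of non-loops and the fact that deleting or contracting a loop preserves orientability as well, to conclude that orientability is preserved under arbitrary edge deletions.

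Second, with orientability of every component of $M\backslash A^c$ established, I would simply return to formula~\eqref{eq:flow count} and note that the second product, over non-orientable connected components of $M\backslash A^c$, is empty for every $A\subseteq E$, hence equals $1$. Therefore the formula collapses to
\[
q_G(M) = \sum_{A \subseteq E}(-1)^{|A^c|}|G|^{|A|-|V|}\prod_{\substack{\mathrm{conn.} \hspace{1mm}\mathrm{cpts} \\ M_i  \hspace{1mm}\mathrm{ of }\hspace{1mm} M\backslash A^c}}\Big(\sum_{\rho \in \widehat{G}}n_{\rho}^{2-2g(M_i)}\Big),
\]
which is precisely~\eqref{equation:orientedcase}. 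No further computation is needed, since $g(M_i)$ here is the ordinary orientable genus of the (now necessarily orientable) component $M_i$, consistent with the first product in~\eqref{eq:flow count}.

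The only step requiring genuine care is the stability of orientability under edge deletion; everything else is bookkeeping. If one does not wish to invoke the premap machinery directly, I would phrase the argument as follows: it suffices to show orientability is preserved when deleting a single edge $e$, and then iterate. If $e$ is a non-loop, then $e$ is a non-loop of $M^{\ast}$ only when $e$ is not a bridge-type edge of the dual, but in any case we may use that $M\backslash e = (M^{\ast}/e)^{\ast}$ (Proposition~\ref{p.del_dual_cont}) and that taking duals and contracting non-loops preserves orientability (the remark after Definition~\ref{d.dual_map} and Lemma~\ref{l.cont_edge_orient}); the residual cases where $e$ is a loop or a coloop in the dual are handled by Observation~\ref{obs:deletion}, which shows that deleting a loop merely removes the loop's crosses and concatenates two sequences, an operation that cannot merge the two $\langle\theta\sigma,\tau\rangle$-orbits guaranteed by Theorem~\ref{theorem:orient}. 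Hence orientability descends to all submaps $M\backslash A^c$, completing the proof.
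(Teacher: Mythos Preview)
Your proposal is correct and follows the same approach as the paper: observe that every connected component of $M\backslash A^c$ is orientable whenever $M$ is, so the product over non-orientable components in~\eqref{eq:flow count} is empty and Theorem~\ref{theorem:flow_count} reduces immediately to~\eqref{equation:orientedcase}. The paper simply asserts the orientability-preservation step in one line, whereas you supply a justification via Theorem~\ref{theorem:orient} (the two-orbit structure under $\langle\theta\sigma,\tau\rangle$ descends to submaps); this is a welcome elaboration, though your detour through Corollary~\ref{l.cor} and the dual-contraction argument is unnecessary once you have the orbit argument in hand.
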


Theorem~\ref{theorem:flow_count} also takes a simpler form when $G$ is an abelian group, the irreducible representations of which all have dimension one. 

\begin{corollary}\label{corollary:abeliancase}
Let $G$ be a finite abelian group and let $d$ be the largest integer for which $G$ has a subgroup isomorphic to $\Z_2^d$. Write $|G| = 2^dm$, in which $m \geq 1$. Then, for a map $M=(V,E,F)$, the number of nowhere-zero local $G$-flows of $M$ is given by
\[
q_G(M) = \sum_{A \subseteq E}(-1)^{|A^c|}(2^d)^{|A|-|V|+k(M\backslash A^c)}m^{|A|-|V|+k_{\mathrm{o}}(M\backslash A^c)},
\]
where $k_{\text{o}}(M\backslash A^c)$ denotes the number of orientable connected components of $M\backslash A^c$ and $k(M\backslash A^c)$ the total number of connected components of $M\backslash A^c$.
\end{corollary}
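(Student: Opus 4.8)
The plan is to specialize the general formula~\eqref{eq:flow count} from Theorem~\ref{theorem:flow_count} to the case of a finite abelian group $G$, where all irreducible representations are one-dimensional. First I would recall the relevant representation-theoretic facts: for abelian $G$ the irreducible representations $\rho \in \widehat{G}$ all have $n_\rho = 1$, and $\widehat{G}$ is itself an abelian group isomorphic to $G$ with $|\widehat{G}| = |G|$. For each $\rho$, the Frobenius--Schur indicator simplifies to $\FF(\rho) = \frac{1}{|G|}\sum_{g \in G}\rho(g^2) = \frac{1}{|G|}\sum_{g \in G}\rho(g)^2$, which is the value under the squaring map; this equals $1$ if $\rho$ factors through $G/2G$ (equivalently, $\rho$ is a square in $\widehat{G}$, equivalently $\rho$ is trivial on all squares... more precisely $\rho^2 = \mathbf{1}$) and $0$ otherwise. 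So the number of $\rho \in \widehat{G}$ with $\FF(\rho) = 1$ is exactly $|\widehat{G}[2]| = |G[2]| = 2^d$, where $G[2]$ is the $2$-torsion subgroup, and the rest have $\FF(\rho) = 0$ (none have $\FF(\rho) = -1$, consistent with the fact that abelian groups have only real-or-complex but no quaternionic representations in a way that produces $-1$ only in the real-dimension-two sense — here all one-dimensional, so $\FF \in \{0,1\}$).

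Next I would evaluate the two products in~\eqref{eq:flow count}. For an orientable connected component $M_i$, the inner sum is $\sum_{\rho \in \widehat{G}} n_\rho^{2 - 2g(M_i)} = \sum_{\rho \in \widehat{G}} 1 = |G|$, independent of the genus. For a non-orientable connected component $M_j$, the inner sum is $\sum_{\rho \in \widehat{G}} \FF(\rho)^{g(M_j)} n_\rho^{2 - g(M_j)} = \sum_{\rho \in \widehat{G}} \FF(\rho)^{g(M_j)}$. Since each $\FF(\rho) \in \{0,1\}$ and $g(M_j) \geq 1$ for a non-orientable component, we get $\FF(\rho)^{g(M_j)} = \FF(\rho)$, so this sum equals $2^d$, again independent of the genus. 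Therefore, for a fixed $A \subseteq E$, writing $k_{\mathrm{o}} = k_{\mathrm{o}}(M\backslash A^c)$ for the number of orientable components and $k = k(M\backslash A^c)$ for the total, the double product becomes $|G|^{k_{\mathrm o}} \cdot (2^d)^{k - k_{\mathrm o}}$.

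Finally I would substitute $|G| = 2^d m$ and combine exponents. The term for $A$ in~\eqref{eq:flow count} is
\[
(-1)^{|A^c|}|G|^{|A| - |V|} \cdot |G|^{k_{\mathrm o}} (2^d)^{k - k_{\mathrm o}} = (-1)^{|A^c|}(2^d m)^{|A| - |V| + k_{\mathrm o}} (2^d)^{k - k_{\mathrm o}},
\]
and splitting $(2^d m)^{|A| - |V| + k_{\mathrm o}} = (2^d)^{|A| - |V| + k_{\mathrm o}} m^{|A| - |V| + k_{\mathrm o}}$ and collecting powers of $2^d$ gives $(2^d)^{|A| - |V| + k}\, m^{|A| - |V| + k_{\mathrm o}}$, which is exactly the claimed summand. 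Summing over $A \subseteq E$ finishes the proof.

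The main obstacle is the clean identification of $\sum_{\rho \in \widehat{G}} \FF(\rho) = 2^d$: one must verify carefully that for a one-dimensional $\rho$ the quantity $\frac{1}{|G|}\sum_{g}\rho(g)^2$ equals $1$ precisely when $\rho^2$ is the trivial character and $0$ otherwise (a standard orthogonality-of-characters argument applied to $\rho^2 \in \widehat{G}$), and then that the number of order-dividing-two characters of $G$ equals $|G[2]| = 2^d$ by the structure theorem for finite abelian groups together with $\widehat{G} \cong G$. Everything else is routine exponent bookkeeping and the observation that $g(M_j) \geq 1$ forces $\FF(\rho)^{g(M_j)} = \FF(\rho)$ when $\FF(\rho) \in \{0,1\}$.
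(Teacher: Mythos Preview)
Your proof is correct and follows essentially the same route as the paper: specialize Theorem~\ref{theorem:flow_count} using $n_\rho=1$, show $\FF(\rho)\in\{0,1\}$ for one-dimensional $\rho$, count $|\{\rho:\FF(\rho)=1\}|=2^d$, and collect exponents. Your identification of $\FF(\rho)$ as $\langle \rho^2,\mathbf{1}\rangle$ via character orthogonality is slightly slicker than the paper's substitution $\rho(h)^2\FF(\rho)=\FF(\rho)$, and your count of the $2$-torsion via $\widehat{G}\cong G$ is the same as the paper's structure-theorem computation in different clothing.
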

\begin{proof}
Since $G$ is abelian, $n_{\rho} = 1$ for all $\rho \in \widehat{G}$. 
Thus any $\rho\in \widehat{G}$ equals its character $\chi_\rho$.
Let $\rho\in \widehat{G}$ and let $h\in G$. Then, using the fact that $\rho$ is multiplicative and $G$ is abelian, we obtain
\[
\rho(h)^2\FF(\rho)=\frac{1}{|G|}\sum_{g\in G}\rho(h^2)\rho(g^2)=\frac{1}{|G|}\sum_{g\in G}\rho((gh)^2)=\FF(\rho).
\]
This implies that either $\FF(\rho)=0$ or $\rho(h)^2=1$, for all $h \in G$. Hence, if $\FF(\rho) \neq 0$ then $\FF(\rho) = 1$.
For $i\in \{-1,0,1\}$, set $L_i:=\{\rho\in \widehat G\mid \FF(\rho)=i\}$.
Then $L_{-1}$ is empty and $\rho\in L_1$ if and only if $\rho(g)\in \{-1,1\}$ for each $g\in G$.
%, since the irreducible representations in $L_{-1}$ must have even dimension. 

As $|\widehat{G}| = |G|$, equation~\eqref{eq:flow count} simplifies to
	\[
	q_G(M) = \sum_{A \subseteq E}(-1)^{|A^c|}|G|^{|A|-|V|+k_{\mathrm{o}}(M\backslash  A^c)}|L_{1}|^{k(M\backslash  A^c)-k_{\mathrm{o}}(M\backslash  A^c)}.
	\]
%	where $k_{\text{no}}(M\backslash  A^c)$ denotes the number of non-orientable connected components of $M\backslash A^c$. 
It now remains to determine $|L_1|$. 
	
	By the classification theorem for finite abelian groups we can write 
	\[
	G \cong \prod_{i=1}^{r}\Z_{n_i},
	\]
	for some $r \geq 1$ and $n_i \in \N$ that are prime powers. Let 
	\[
	T := \{i \in [r] \mid n_i \text{ even}\}
	\]
	and let $d = |T|$, which is the largest integer for which $G$ has a subgroup isomorphic to $\Z_2^d$. Then $|G| = 2^dm$, for some $m \geq 1$. We claim that $|L_{1}| = 2^d$. In order to see this, note first that for $\rho \in \widehat{G}$ we can write
	\[
	\rho = \rho_1 \cdots \rho_r,
	\] 
	where $\rho_i \in \widehat{\Z_{n_i}}$ for $i \in [r]$. %By Theorem \ref{theorem:fs} below (but it is also not difficult to prove this directly) it follows $\rho \in L_1$ if and only if $\rho(g) \in \R$ for all $g \in G$. 
	Now $\rho$ is real-valued if and only if
	\[
	\rho_i  =\begin{cases} \rho_{\text{triv}} \hspace{16.3mm}\text{ for } i \in [r]\setminus T, \\ \rho_{\text{triv}} \text{ or } \rho_{\text{sign}} \hspace{3mm}\text{ for } i \in T,\end{cases}
	\]
where $ \rho_{\text{triv}}$ is the trivial character (taking value just 1) and $ \rho_{\text{sign}}$	is the character of a cyclic group of even order taking values $1$ or $-1$. This is because any irreducible representation of a cyclic group $\Z_{n}$ with $n > 1$ and $2\nmid n$ takes some values in $\C\backslash\R$. This proves the claim and the statement of the corollary follows.
\end{proof}

Corollary~\ref{corollary:abeliancase} may of course be proved directly, without using Theorem~\ref{theorem:flow_count}. % alert to upcoming paper?

\begin{remark}\label{rmk:signed}
Corollary~\ref{corollary:abeliancase}, when translated into the language of signed graphs, for which there is a notion of a local $G$-flow when $G$ is abelian, is equivalent to  Theorem~1.3 in~\cite{devos17}. Underlying a map $M$ is a signed graph, namely the underlying graph of $M$ with a positive or negative sign attached to each edge according as it is non-twisted or twisted\footnote{For every vertex, pick one of the two cyclic permutations defining it. Let $E$ be the union of the crosses in these cyclic permutations. Then the edge $e=\{a,\sigma a, \theta a, \theta \sigma a \}$ is twisted if the two crosses appearing in $E$ are either $\{a,\theta a\}$ or $\{\sigma a,\theta \sigma a \}$, and non-twisted if the two crosses are either $\{a,\theta \sigma a\}$ or $\{\sigma a,\theta a \}$.} 
%(we defined non-twisted and twisted loops in Section~2, and this distinction can be extended to all edges -- %the most intuitive formulation is probably that which uses the ribbon graph representation of a map, and 
(the signing of edges is the same as that used in e.g.~\cite{mohar01} to define combinatorial embeddings by vertex rotations and edge signs). A cycle is {\em balanced} in a signed graph if the number of negative signs is even, and {\em unbalanced} otherwise; the signed graph as a whole is balanced if every cycle is balanced, otherwise it is unbalanced.  A map is orientable if and only if its underlying signed graph is balanced. 

To define local $G$-flows of a signed graph for an abelian group $G$, the edges are split into two half-edges, each receiving an orientation (aligned when the edge is positive, opposite when the edge is negative) and the same value in $G$ is given to the two half-edges of an edge. For the assignment of values in $G$ to be a local $G$-flow Kirchhoff's law must hold at each vertex (the half-edges directed into the vertex have the same sum of values as the half-edges directed out of the vertex). % (opposing orientations of half-edges when negative). %(rather than the directed edges of graph orientations
%the two half-edges of a positive edge receiving consistent orientations, the two half-edges of negative edges receiving opposing orientations.  
Local $G$-flows of a signed graph correspond to local $G$-flows of a map with this signed graph underlying it. The orientable connected components of a submap correspond to the balanced connected components of the underlying signed subgraph. 
We can now see that Corollary~\ref{corollary:abeliancase} is equivalent to  Theorem~1.3 in~\cite{devos17}.
Furthermore, the expression in Corollary~\ref{corollary:abeliancase} is an evaluation of the following trivariate specialization of the surface Tutte polynomial, which depends only on the underlying signed graph of $M$ and hence yields a signed graph invariant:
\[
S(M; X,Y,Z)=\sum_{A\subseteq E}(X-1)^{k(M\backslash A^c)-k(M)}(Y-1)^{|A|-|V|+k_o(M\backslash A^c)}(Z-1)^{k(M\backslash A^c)-k_o(M\backslash A^c)},
\]
where $k(M\backslash A^c)$ and $k_o(M\backslash A^c)$ denote the number of connected components and orientable connected components of $M\backslash A^c$ respectively, for $A \subseteq E$.  The polynomial $S(M;X,Y,Z)$ is equal to $(X-1)^{-k(M)}T(M;\x,\y)$ in which $\x$ and $\y$ are set equal to the following values: $x=1$, $y=Y-1$, $x_g=1$ for all $g\in \Z$, $y_g=X-1$ if $g\geq 0$ and $y_g=(X-1)(Z-1)/(Y-1)$ if $g\leq -1$.
This polynomial invariant of the underlying signed graph of $M$ is different from the ``signed Tutte polynomial" of  Kauffman~\cite{kauf89} as it is also invariant under switchings of signs at a vertex. %, and is the subject of a forthcoming paper by the present authors in which we explore its properties. 
(This new triviariate Tutte polynomial for signed graphs appears in the slides of a talk~\cite{kriocon13} given by Krieger and O'Connor in 2013, but we have not found any further reference to it.) As a specialization of the surface Tutte polynomial of $M$ to an invariant of the underlying signed graph of $M$ the signed graph invariant  $S(M;X,Y,Z)$ is formed in a similar way to the Tutte polynomial of the underlying graph of~$M$.
\end{remark}

\subsection{Proof of Theorem \ref{theorem:flow_count}}\label{subsec:proof of eval}

To prove Theorem \ref{theorem:flow_count} we require a number of auxiliary definitions and results. 

A {\em bouquet} is a connected map which has just one vertex -- in the representation of $M$ as a premap $(\theta, \sigma, \tau;C)$ the permutation $\tau$ has exactly two cycles. 

\begin{definition}\label{definition:standardbouquet}
A bouquet $M=(\theta,\sigma,\tau;C)$ is a {\em standard bouquet} if the permutation $\tau$ takes the form
\begin{equation}\label{eq:non-orientable sb}
\arraycolsep=3pt
\left(\begin{array}{cccccccccccccc}a_1 & a_2 & \theta\sigma a_1 &  \theta\sigma a_2 &  \cdots &  a_{2g-1} &  a_{2g} &  \theta\sigma a_{2g-1} &  \theta\!\sigma a_{2g} &  a_{2g+1} & \theta\sigma a_{2g+1} & \cdots & a_m& \theta\sigma a_m\end{array}\right)
\end{equation}
 along with its paired cycle (reversed cycle with application of $\sigma$), when it is orientably embedded, and
 \begin{equation}\label{eq:orientable sb}
\arraycolsep=3pt
\left(\begin{array}{cccccccccccc}  
a_1 & \theta a_1 & a_2 & \theta a_2 & \cdots & a_{g} & \theta a_{g} & a_{g+1} & \theta\sigma a_{g+1}  & \cdots & a_m & \theta\sigma a_m\end{array}\right)
\end{equation}
    along with its paired cycle (reversed cycle with application of $\sigma$), when it is non-orientably embedded.
 \end{definition}
For an orientable standard bouquet we have
    $$\varphi=\tau\theta\sigma=
\arraycolsep=3pt
\left(\begin{array}{cccccccccccccc}
a_1 & \theta\sigma a_2 & \theta\sigma a_1 & a_2 & \cdots & a_{2g-1} & \theta\sigma a_{2g} & \theta\sigma a_{2g-1} & a_{2g} & a_{2g+1} & a_{2g+2} & \cdots & a_m\end{array}\right)$$
      $$(\theta\sigma a_{2g+1})\;(\theta\sigma a_{2g+2})\cdots (\theta\sigma a_{m}),$$
      along with the paired cycles (reversed cycles with application of $\theta$).
      Therefore the number of faces is equal to $m-2g+1$. Since there is one vertex and $m$ edges, this gives Euler characteristic $1-m+m-2g+1=2-2g$, and by Euler's formula the (orientable) genus is~$g$.

       For a non-orientable standard bouquet we have
    $$\varphi=\tau\theta\sigma=
\arraycolsep=3pt
\left(\begin{array}{ccccccccccc}  
a_1 & \sigma a_1 & a_2 & \sigma a_2 & \cdots & a_{g} & \sigma a_{g} & a_{g+1} & a_{g+2} & \cdots & a_m\end{array}\right)$$
      $$(\theta\sigma a_{g+1})\;(\theta\sigma a_{g+2})\cdots (\theta\sigma a_{m}),$$
      along with the paired cycles (reversed cycles with application of $\theta$).
      Therefore the number of faces is equal to $m-g+1$, there is one vertex, and $m$ edges. The Euler characteristic is $1-m+m-g+1=2-g$, and by Euler's formula the (non-orientable) genus is~$g$.

\begin{remark}
Let $M$ be a standard bouquet with one face (a {\em canonical map} in Tutte's terminology~\cite[Chapter X]{tutte01}) and with compact surface $\Sigma$. % (equivalently, having $s$ loops, where $s$ is the Euler genus of the bouquet). %nullity $1$. % Tutte: unitary map has v=1=f
If $\Sigma$ is of genus $g$, then its fundamental group $\pi_1(\Sigma)$ has the presentation
\[
\pi_1(\Sigma) \cong \begin{cases} \langle a_1,\dots,a_g,b_1,\dots,b_g \mid a_1b_1a_1^{-1}b_1^{-1}\cdots a_gb_ga_g^{-1}b_g^{-1} = 1 \rangle \hspace{3.5mm}\text{ if } \Sigma \text{ is orientable,}\\ \langle c_1,\dots,c_g \mid c_1^2\cdots c_g^2 = 1 \rangle \hspace{52mm}\text{ if } \Sigma \text{ is not orientable.}\end{cases}
\]
Local $G$-tensions of $M$ therefore correspond one-to-one with homomorphisms from $\pi_1(\Sigma)$ to $G$. The number of surjections from $\pi_1(\Sigma)$ to $G$ can be counted using an algebraic version of the M\"obius inversion formula due to Hall \cite{hall36}. Surjections from $\pi_1(\Sigma)$ to $G$ are in bijection with certain surface coverings of $\Sigma$~\cite{jones95}. 
\end{remark}

\begin{theorem}\label{thm:flows standard bouquets}
Let $M$ be a standard bouquet of dual nullity $n^*$ and genus $g$ and let $G$ be a finite group.
The number $p_G^1(M)$ of local $G$-tensions on $M$ is given by
\[
p_G^1(M) = \begin{cases} |G|^{n^*-1}\sum_{\rho \in \widehat{G}}\FF(\rho)\matchsupheight^gn_{\rho}^{2-g} \hspace{4mm} \text{ if $M$ is non-orientable,}\\
|G|^{n^*-1}\sum_{\rho \in \widehat{G}}n_{\rho}^{2-2g} \hspace{12.4mm} \text{ if $M$ is orientable.}\end{cases}
\]
\end{theorem}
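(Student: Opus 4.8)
The plan is to reduce the count of local $G$-tensions on a standard bouquet to a count of group homomorphisms from a surface group, and then to evaluate that count using elementary character theory. First I would observe that the dual nullity $n^*$ of a standard bouquet with $m$ edges and genus $g$ satisfies $n^* = m - g + 1$ in the non-orientable case and $n^* = m - 2g + 1$ in the orientable case (both equal to the number of faces, since $r^*(M) = f(M) - 1$ and $n^*(M) = e(M) - r^*(M) = m - f(M) + 1$). Next, using the explicit form of $\varphi = \tau\theta\sigma$ computed above, I would identify which cycles of $\varphi$ are ``genus-carrying'' (the single long cycle of length $4g$ resp.\ $2g$) and which are the $m - 2g$ resp.\ $m - g$ trivial length-one cycles coming from the $\theta\sigma a_i$. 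A local $G$-tension assigns group elements to crosses with $f(\theta a) = f(a)^{-1}$, $f(\sigma a) = f(a)$, so it is determined by its values on the $m$ edges (one free value $x_i := f(a_i) \in G$ per edge), subject to the relations (ii) — one relation per non-paired cycle of $\varphi$. The length-one cycles impose no constraint (a single element equals $1$ would be wrong — rather, they come from faces of degree one whose boundary relation is $f(\theta\sigma a_i)\cdots = $ the empty/degenerate product; I need to check these contribute no constraint but rather just reflect that those edges are ``free''), while the long cycle imposes exactly the surface-group relation: $x_1 x_2 x_1^{-1} x_2^{-1} \cdots x_{2g-1} x_{2g} x_{2g-1}^{-1} x_{2g}^{-1} = 1$ in the orientable case and $x_1^2 x_2^2 \cdots x_g^2 = 1$ in the non-orientable case. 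The remaining $m - 2g$ (resp.\ $m-g$) edge-values are completely unconstrained, contributing a factor $|G|^{m-2g}$ (resp.\ $|G|^{m-g}$), which is exactly $|G|^{n^*-1}$.

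So it remains to prove that the number of solutions in $G$ to $\prod_{i=1}^{g}[a_i,b_i] = 1$ is $|G|^{2g-1}\sum_{\rho\in\widehat G} n_\rho^{2-2g}$ and the number of solutions to $\prod_{i=1}^g c_i^2 = 1$ is $|G|^{g-1}\sum_{\rho\in\widehat G}\FF(\rho)^g n_\rho^{2-g}$. For this I would invoke the standard character-theoretic machinery built from the two orthogonality-type identities: $\sum_{g\in G}\rho(gxg^{-1}) = \frac{|G|}{n_\rho}\chi_\rho(x)\,\mathrm{id}$ for irreducible $\rho$, and the class-function expansion of the indicator of $\{1\}$, namely $\mathbf 1_{\{h=1\}} = \frac{1}{|G|}\sum_{\rho}n_\rho\chi_\rho(h)$. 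Writing the number of solutions as $\sum_{h\in G}(\#\{(a_i,b_i): \prod[a_i,b_i]=h\})\cdot\mathbf 1_{\{h=1\}}$ and expanding, a standard induction on $g$ using the commutator-convolution identity shows the orientable count equals $|G|^{2g-1}\sum_\rho n_\rho^{2-2g}$; this is the classical Frobenius formula. For the non-orientable case the relevant identity is the squaring-convolution $\sum_{c\in G}\rho(c^2) = \FF(\rho)\, n_\rho\cdot(\text{something})$ — more precisely, the operator $c\mapsto \rho(c^2)$ summed over $G$ has trace $|G|\FF(\rho)$ and, by Frobenius–Schur, $\sum_{c\in G}\chi_\rho(c^2 x) = \FF(\rho)|G| \cdot \langle\text{appropriate term}\rangle$; iterating $g$ times and collapsing at $h=1$ yields the stated formula with $\FF(\rho)^g$.

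\textbf{Main obstacle.} The representation-theoretic heart — deriving the clean convolution identity for squares, $\sum_{c\in G}\chi_\rho(c^2 x) = \FF(\rho)\,\frac{|G|}{n_\rho}\chi_\rho(x)$ (equivalently that the ``square-sum'' central element acts as the scalar $\FF(\rho)\,|G|/n_\rho$ on $\rho$), and then chaining it $g$ times without sign or normalization errors — is the step I expect to require the most care; the paper signals as much by deferring precisely this to Appendix~\ref{sec:proof of flow count}. The combinatorial reduction (edge-values, which cycles of $\varphi$ are genus-carrying, the count $n^*-1$ of free edges) is routine bookkeeping from the explicit permutation formulas above, but one must be careful that the faces of degree one contribute a multiplicative $|G|$ each and not a constraint, and that the paired cycles of $\varphi$ give relations equivalent to the ones already counted (as noted after Definition~\ref{d.flows_premaps}) so they are not double-counted.
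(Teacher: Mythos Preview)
Your overall strategy---reduce the tension count to the number of solutions of a surface-group relation and then evaluate that count with characters and the Frobenius--Schur indicator---is exactly the paper's approach. The representation-theoretic core you identify (the ``square-sum'' element acts on an irreducible $\rho$ by the scalar $\FF(\rho)\,|G|/n_\rho$) is precisely the content of the paper's Lemma~\ref{lemma:fsrep}, and your chaining argument matches the trace computation in Appendix~\ref{sec:proof of flow count}.

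However, your bookkeeping in the reduction step contains a genuine error. For the non-orientable standard bouquet with $m$ edges the face permutation $\varphi$ has one long cycle $(a_1\;\sigma a_1\;\cdots\;a_g\;\sigma a_g\;a_{g+1}\;\cdots\;a_m)$ together with the $m-g$ length-one cycles $(\theta\sigma a_{g+1}),\dots,(\theta\sigma a_m)$. The tension relation for a length-one cycle $(\theta\sigma a_i)$ is \emph{not} vacuous: it reads $f(\theta\sigma a_i)=1$, and since $f(\theta\sigma a_i)=f(\sigma a_i)^{-1}=f(a_i)^{-1}$ this forces $f(a_i)=1$. So the extra $m-g$ edges are \emph{constrained}, not free. (The long cycle by itself gives $h_1^2\cdots h_g^2\,h_{g+1}\cdots h_m=1$; only after imposing $h_{g+1}=\cdots=h_m=1$ does it collapse to the surface relation $h_1^2\cdots h_g^2=1$.) The orientable case is analogous.

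This error is tied to your miscomputation of $n^*$. You wrote the correct formula $n^*(M)=m-f(M)+1$, but then substituted $f(M)=g$ instead of $f(M)=m-g+1$; in fact $n^*=g$ in the non-orientable case and $n^*=2g$ in the orientable case, independently of $m$. With the constraints accounted for correctly, the tension count is exactly the number of solutions of $h_1^2\cdots h_g^2=1$ (resp.\ $\prod_{i=1}^g[a_i,b_i]=1$), namely $|G|^{g-1}\sum_\rho\FF(\rho)^g n_\rho^{2-g}$ (resp.\ $|G|^{2g-1}\sum_\rho n_\rho^{2-2g}$), which is already $|G|^{n^*-1}\sum_\rho(\cdots)$ with no further free-variable factor. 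Your version would yield $|G|^{m-1}\sum_\rho(\cdots)$, which is wrong whenever $m>g$ (resp.\ $m>2g$). Once this is fixed the rest of your outline goes through as written.
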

%While this result is known in some communities, 
\noindent We provide an elementary proof of Theorem~\ref{thm:flows standard bouquets} in Appendix~\ref{sec:proof of flow count}. 

We can enumerate local $G$-tensions of an arbitrary map $M$ by first applying a sequence of operations reducing $M$ to a standard bouquet of the same dual nullity, genus and orientability as~$M$. %The operations involved are those used in the proof 

\begin{theorem}\label{theorem:tensioncount}
Let $M$ be a connected map. Then the number $p_G^{1}(M)$ of local $G$-tensions on $M$ is given by
\begin{equation}
p_G^{1}(M) = \begin{cases} |G|^{n^*(M)-1}\sum_{\rho \in \widehat{G}}\mathcal{F}(\rho)\matchsupheight^{g(M)}n_{\rho}^{2-g(M)} \hspace{2.1mm} \text{if $M$ is non-orientable},\\
|G|^{n^*(M)-1}\sum_{\rho \in \widehat{G}}n_{\rho}^{2-2g(M)} \hspace{16mm} \text{if $M$ is orientable}.\end{cases}
\end{equation}
\end{theorem}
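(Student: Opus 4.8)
The strategy is to reduce the general connected map $M$ to a standard bouquet of the same dual nullity, genus, and orientability type, so that Theorem~\ref{thm:flows standard bouquets} applies. The key observation is that the quantity $p_G^1$ should be controlled by a small number of operations under which it behaves predictably. First I would record how $p_G^1$ changes under contraction of a non-loop edge $e$: by Definition~\ref{d.flows_premaps}, a local $G$-tension is a $G$-flow of the dual map $M^*$ (Proposition~\ref{proposition:flowdualtension}), and $M^*/e$ (in the appropriate sense) corresponds under duality to deletion in $M^*$. It is cleanest to work throughout on the tension side and contract non-loops of $M$: contracting a non-loop leaves the faces unchanged (Lemma~\ref{l.param_edge_cont_maint}), hence leaves $\varphi = \tau\theta\sigma$ essentially unchanged, and the defining equations of a local $G$-tension are stated purely in terms of $\varphi$ and the relation $f(\theta a) = f(a)^{-1}$, $f(\sigma a) = f(a)$. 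So I would argue that contracting a non-loop $e$ yields $p_G^1(M/e) = |G|^{-1} p_G^1(M)$ — one extra free group element disappears because we lose the edge, but the face structure (hence the system of relations) is preserved, consistent with $n^*(M/e) = n^*(M) - 1$ while $g(M/e) = g(M)$ and orientability is preserved by Lemma~\ref{l.cont_edge_orient}.

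Next, having contracted a spanning tree of $M$ (all non-loops), we are left with a bouquet $B$ on one vertex with $n^*(B) = n^*(M)$, $g(B) = g(M)$, and the same orientability as $M$, and $p_G^1(M) = |G|^{e(M) - e(B)} \cdot |G|^{-(\text{number of contractions})}\,p_G^1(B)$; more precisely $p_G^1(M) = |G|^{r(M)} p_G^1(B)$ where $r(M) = v(M) - 1$ is the number of non-loops contracted, since $n^*(M) - 1 = (n^*(B) - 1) + r(M)$. It then remains to pass from an arbitrary bouquet $B$ to a \emph{standard} bouquet $B'$ with the same three invariants. This is exactly where the combinatorial classification theorem for compact surfaces enters: the one-vertex map $B$ gives a cell complex whose underlying surface $\Sigma$ has Euler genus $s(B)$ and a fixed orientability type, and the classification theorem produces a homeomorphism (equivalently, a sequence of combinatorial ``polygon moves'') from $B$ to the standard bouquet realizing the canonical polygon presentation of $\Sigma$. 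I would need that such a sequence of moves preserves $p_G^1$ — which should follow because each move is a reorganization of the single facial boundary word that does not change the isomorphism type of the resulting system of relations defining a local $G$-tension (the relation coming from the face is a cyclic word in the edge-values and their inverses, and the polygon moves realize Tietze-type transformations that preserve the set of solutions in $G$). Alternatively, and perhaps more safely, one can observe that for a one-face bouquet $p_G^1$ literally counts homomorphisms $\pi_1(\Sigma) \to G$ (as in the Remark preceding Theorem~\ref{thm:flows standard bouquets}), which depends only on $\Sigma$; for bouquets with more faces one first contracts down / argues via the dual.

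Assembling: $p_G^1(M) = |G|^{r(M)} p_G^1(B) = |G|^{r(M)} p_G^1(B')$, and Theorem~\ref{thm:flows standard bouquets} with $n^*(B') = n^*(M) - r(M)$... — wait, one must be careful: $n^*(B') = e(B') - f(B') + 1$ and $e(B') = e(M) - r(M)$, $f(B') = f(M)$, so $n^*(B') = n^*(M) - r(M)$; hence $|G|^{r(M)}\cdot|G|^{n^*(B')-1} = |G|^{n^*(M)-1}$, and $g(B') = g(M)$, giving exactly the claimed formula in each of the orientable and non-orientable cases. I expect the main obstacle to be the bouquet-to-standard-bouquet step: making rigorous that the combinatorial surface classification moves can be applied while tracking that $p_G^1$ is invariant under them (and that orientability and genus are preserved at each move), rather than just invoking a homeomorphism abstractly. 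The non-loop contraction bookkeeping (verifying $p_G^1(M/e) = |G|^{-1}p_G^1(M)$ from Definition~\ref{d.flows_premaps} and Observation~\ref{obs:contraction} applied to the dual, using that faces are unchanged) is more routine but still needs a careful argument that the relation system genuinely loses exactly one degree of freedom.
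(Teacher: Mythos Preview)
Your overall strategy—reduce to a standard bouquet while tracking how $p_G^1$ changes—is exactly the paper's strategy, but the paper executes it differently, and the difference matters. The paper associates to $M$ its cell complex and applies the combinatorial classification theorem (Theorem~\ref{theorem:class}) directly: any cell complex reaches normal form via moves (P1) (edge subdivision) and (P2) (face splitting) and their inverses, and Lemma~\ref{lemma:gcount} verifies in two lines that each move preserves the quantity $|G|^{f-e}p_G^1$. This yields the formula for $M$ in one stroke, with no preliminary reduction to a bouquet.

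Your two-stage route (contract a spanning tree, then classify the resulting bouquet) would also work, but the first stage conceals the real difficulty. You assert $p_G^1(M/e)=|G|^{-1}p_G^1(M)$ for a non-loop $e$ on the grounds that ``the face structure (hence the system of relations) is preserved'' and ``one extra free group element disappears''. But the face relations are \emph{not} preserved: contracting $e$ deletes the variable $f(e)$ from every relation in which it occurs, and for nonabelian $G$ there is no general principle saying that dropping one generator from a system of word equations divides the solution count by $|G|$ (this fails for arbitrary presentations). The identity \emph{is} true for maps—precisely because $M$ and $M/e$ are cellulations of the same surface—but making that rigorous is exactly what (P1), (P2) and the classification theorem accomplish. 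So your step~1 is not more routine than your step~2; it tacitly needs the same machinery, and once you have Lemma~\ref{lemma:gcount} and Theorem~\ref{theorem:class} there is no reason to pass through an intermediate bouquet at all.

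Your step~2 is also thinner than you suggest: the bouquet $B=M/T$ obtained by contracting a spanning tree has $f(B)=f(M)$ faces, not one, so ``reorganizing the single facial boundary word'' or invoking $\mathrm{Hom}(\pi_1(\Sigma),G)$ is not immediately available. You again need the full effect of Lemma~\ref{lemma:gcount} across all moves (P1), (P2). In short, your plan is correct in outline but the part you flag as ``routine bookkeeping'' is the genuine content, and the paper's proof is the clean way to supply it.
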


To prove Theorem~\ref{theorem:tensioncount} we use the classification theorem for compact surfaces, which for our purposes is most conveniently formulated in~\cite{gaxu13}. Before stating it we first need some terminology and definitions. For a set $X$, we define $X^{-1} = \{x^{-1} \mid x \in X\}$, the set of formal inverses of elements in $X$. We assume that $X \cap X^{-1} = \emptyset$ and that $(x^{-1})^{-1} = x$, for $x \in X$. The next definition is taken from~\cite{gaxu13}.

\begin{definition}
A \textit{cell complex} $K$ is a triple $K = (\FFF,\EEE,\BBB)$, where $\FFF$ is a finite non-empty collection of \textit{faces}, $\EEE$ is a finite (possibly empty) set of \textit{edges}, and $\BBB$ is the \textit{boundary function} that assigns to a face $A \in \FFF \cup \FFF^{-1}$ a cyclic permutation of edges in $\EEE \cup \EEE^{-1}$ (called the boundary of the face) and that satisfies the following conditions:
\begin{enumerate}
\item If $\BBB(A) = a_1\cdots a_n$, then $\BBB(A^{-1}) = a_n^{-1}\cdots a_1^{-1}$.
\item If $A_1,A_2 \in \FFF$ and $A_1 \neq A_2$, then $\BBB(A_1) \neq \BBB(A_2)$.
\item Every $e \in \EEE \cup \EEE^{-1}$ occurs precisely twice among the elements of all boundaries.
\item The complex $K$ is not the union of two disjoint systems satisfying the above conditions.
\end{enumerate}
\end{definition}

We describe how to construct a cell complex from a map and vice versa. Let $M = (\theta,\sigma,\tau;C)$ be a connected map with $|C| = 4m$, for some $m \geq 1$ (the case of an isolated vertex is trivial). Set $\EEE = \{e_1,...,e_m\}$ and let $T: C \rightarrow \EEE \cup \EEE^{-1}$ be a function satisfying the following two conditions:
\begin{enumerate}
\item If $a, b \in C$ and $a \notin \{b, \theta b, \sigma b, \theta\sigma b\}$, then $T(a) \notin \{T(b),T(b)^{-1}\}$.
\item For all $a \in C$, $T(\theta\sigma a) = T(\theta a) = T(a)^{-1} = T(\sigma a)^{-1}$.
\end{enumerate}
These conditions imply that $T$ is a two-to-one surjection. Let $\FFF = \{A \mid A \in F\}$, the set consisting of formal elements that are the faces of $M$. Then $\FFF^{-1} = F^{-1}$. The boundary function $\BBB$ is defined as follows. If $A$ is a face of $M$ given by 
\[
\left(\begin{array}{ccc} a_1 & \cdots & a_{2m} \end{array}\right)\left(\begin{array}{ccc} \theta a_{2m} & \cdots & \theta a_1 \end{array}\right)\hspace{-.5mm},
\]
then 
\[
\BBB(A) = T(a_1)\cdots T(a_{2m}) \text{ and } \BBB(A^{-1}) = T(\theta a_{2m})\cdots T(\theta a_1).
\]
That $\BBB$ is a boundary function follows from the second condition defining the function~$T$. Hence $(\FFF,\EEE,\BBB)$ is a cell complex.

For the other direction, let $K = (\FFF,\EEE,\BBB)$ be a given cell complex. We may write $\EEE = \{e_1,...,e_m\}$, for some $m \geq 1$. Set $C := \{a_1,...,a_m,a_1',...,a_m',b_1,...,b_m,b_1',...,b_m'\}$. Define two permutations $\theta$ and $\sigma$ on $C$ by
\begin{itemize}
\item $\theta(a_i) = a_i', \hspace{1mm} \theta(b_i) = b_i'$, for all $i$, and $\theta^2 = \iota$.
\item $\sigma(a_i) = b_i, \hspace{1mm} \sigma(a_i') = b_i'$, for all $i$, and $\sigma^2 = \iota$.
\end{itemize}
Next we define the permutation $\varphi$ (corresponding to the faces of the map to be constructed) by describing its collection of pairs of permutation cycles, there being a pair for each face $A \in \FFF$. Since $\varphi = \tau\theta\sigma$, this will uniquely define $\tau$. In the boundaries $\BBB(A)$, where $A$ ranges over $\FFF$, every edge $e \in \EEE$ occurs at least once (if $e^{-1}$ occurs twice, then interchange the role of $e$ and $e^{-1}$). If it occurs twice (overall), replace one of the instances by the formal element $^{\sigma}e$. Let $S: \EEE \cup \EEE^{-1} \rightarrow C$ be the map given by $S(e_i) = a_i$ and $S(e_i^{-1}) = a_i'$, for all $i$. Then for $A \in \FFF$ with $\BBB(A) = e_{i_1}\cdots e_{i_t}$ we define the corresponding cycle in $\varphi$ to be $S(e_{i_1})\cdots S(e_{i_t})$, and we write $\sigma S(e_{i_j})$ whenever $^{\sigma}e_{i_j}$ occurs. It is now easy to verify that $(\theta,\sigma,\tau;C)$ is a connected map.

There are two types of \textit{normal form} of a cell complex. Write $K= (\FFF,\EEE,\BBB)$. Then the two types both have $\FFF = \{A\}$ and are otherwise given by

\begin{speciaalenumerate}
\item $\EEE = \{a_1,...a_g,b_1,...,b_g\}$ and $\BBB(A) = a_1b_1a_1^{-1}b_1^{-1}\cdots a_gb_ga_g^{-1}b_g^{-1}$, with $g \geq 0$.
\item $\EEE = \{a_1,...,a_g\}$ and $\BBB(A) = a_1a_1\cdots a_ga_g$, with $g \geq 1$.
\end{speciaalenumerate}

Via the construction outlined above, the type (i) and type (ii) cell complexes are seen to correspond to the orientable and non-orientable standard bouquet of genus $g$, respectively.

\begin{definition}[Definition $6.3$ in~\cite{gaxu13}]
Let $K$ and $K'$ be cell complexes. Then $K'$ is \textit{an elementary subdivision of} $K$ if $K'$ is obtained from $K$ by one of the following two operations:
\begin{itemize}
\item[(P1)] Two edges $a$ and $a^{-1}$ in $K$ are replaced by $bc$ and $c^{-1}b^{-1}$ in all boundaries, where $b$ and $c$ are distinct new edges not belonging to $K$.
\item[(P2)] A face $A$ in $K$ with boundary $a_1\cdots a_pa_{p+1}\cdots a_n$ is replaced by two faces, $A'$ and $A''$, in $K'$ which have boundaries $a_1\cdots a_pd$ and $d^{-1}a_{p+1}\cdots a_n$ respectively, where $d$ is a new edge not belonging to $K$.  
\end{itemize}
\end{definition}

%Although we do not need it for the following, we remark here that 
%\begin{remark}
Operation (P2) for maps corresponds in the dual map to the vertex-splitting operation defined by Tutte in Section X.7 of~\cite{tutte01}. Vertex-splitting is the inverse operation of contracting a non-loop, and leaves the number of local flows on a map invariant. 
%\end{remark}

The operations (P1) and (P2) both preserve the number of local $G$-tensions up to a factor of $|G|$; more precisely, we have the following result.

\begin{lemma}\label{lemma:gcount}
Let $K$ and $K'$ be two cell complexes and let $M$ and $M'$ denote the maps corresponding to $K$ and $K'$ respectively. If $K'$ is an elementary subdivision of $K$, then 
\[
|G|^{f(M)-e(M)}p^1_G(M) = |G|^{f(M')-e(M')}p^1_G(M').
\]
\end{lemma}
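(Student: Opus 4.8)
The plan is to check the two elementary subdivision operations (P1) and (P2) separately, in each case comparing the defining equations of local $G$-tensions for $M$ and $M'$. Recall that a local $G$-tension is a function on crosses that is $\theta$-antisymmetric, $\sigma$-symmetric, and whose product around each face cycle of $\varphi = \tau\theta\sigma$ equals $1$; equivalently it is an assignment of a group element to each (oriented) edge of the underlying cell complex so that the ordered product of edge-labels around the boundary $\BBB(A)$ of each face $A$ is trivial. Under this identification the claim becomes transparent: a local $G$-tension on $K$ assigns one element of $G$ to each edge so that every boundary word evaluates to $1$.

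For (P1), the edge $a$ of $K$ is replaced by a length-two path $bc$; the new cell complex $K'$ has one extra edge ($e(M') = e(M)+1$) and the same number of faces ($f(M') = f(M)$). Given a local $G$-tension $\phi$ on $K$, extending it to $K'$ amounts to choosing $\phi'(b), \phi'(c) \in G$ with $\phi'(b)\phi'(c) = \phi(a)$ (so that every boundary word of $K'$ evaluates to the corresponding boundary word of $K$, since $bc$ and $c^{-1}b^{-1}$ replace $a$ and $a^{-1}$); there are exactly $|G|$ such choices (pick $\phi'(b)$ freely, then $\phi'(c)$ is forced). Hence $p^1_G(M') = |G|\, p^1_G(M)$, and since $f(M')-e(M') = f(M)-e(M)-1$ this is exactly the asserted identity $|G|^{f(M)-e(M)}p^1_G(M) = |G|^{f(M')-e(M')}p^1_G(M')$.

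For (P2), the face $A$ with boundary $a_1\cdots a_p a_{p+1}\cdots a_n$ is split by a new edge $d$ into faces $A'$ with boundary $a_1\cdots a_p d$ and $A''$ with boundary $d^{-1}a_{p+1}\cdots a_n$; thus $e(M') = e(M)+1$ and $f(M') = f(M)+1$, so $f(M')-e(M') = f(M)-e(M)$ and we must show $p^1_G(M') = p^1_G(M)$. Given a local $G$-tension on $K$, the labels of the old edges already satisfy all boundary relations except the two new ones; writing $g := \phi(a_1)\cdots\phi(a_p)$ we have $\phi(a_1)\cdots\phi(a_p)\phi(a_{p+1})\cdots\phi(a_n) = 1$, i.e.\ $\phi(a_{p+1})\cdots\phi(a_n) = g^{-1}$. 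Setting $\phi'(d) := g^{-1}$ makes the boundary word of $A'$ equal to $g g^{-1} = 1$ and that of $A''$ equal to $g \cdot g^{-1} = 1$; this is the unique valid choice for $\phi'(d)$, and conversely every local $G$-tension on $K'$ restricts to one on $K$ by forgetting $d$. So the correspondence is a bijection and $p^1_G(M') = p^1_G(M)$.

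The only point requiring care — and the main (modest) obstacle — is the bookkeeping translating the cross-level definition of a local $G$-tension (Definition~\ref{d.flows_premaps}) into the statement ``the ordered product along each boundary word $\BBB(A)$ is $1$,'' making sure orientations of edges and the action of $\sigma$ on the formal symbols $^{\sigma}e$ are handled consistently with the construction $S:\EEE\cup\EEE^{-1}\to C$ given above; once that dictionary is in place, the two cases are the short computations just described, and the paired cycles of $\varphi$ (related by $\varphi\theta = \theta\varphi^{-1}$) contribute nothing new since they give equations equivalent under inversion, as already noted after Definition~\ref{d.flows_premaps}.
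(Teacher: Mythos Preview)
Your proposal is correct and follows essentially the same approach as the paper's own proof: a case split on (P1) and (P2), counting the change in $e$ and $f$ and exhibiting the $|G|$-to-one (respectively bijective) correspondence between local $G$-tensions on $K'$ and on $K$. Your version is somewhat more explicit about the bijections and about the dictionary between the cross-level definition and boundary words, but the argument is the same.
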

\begin{proof}
On the one hand, if $K'$ is obtained from $K$ by operation (P1) then clearly $f(M') = f(M)$ and $e(M') = e(M)+1$. Furthermore, we have that $p^1_G(M') = |G|p^1_G(M)$, as replacing an edge $a$ in a boundary by $bc$ yields precisely one extra degree of freedom for specifying a local $G$-tension. Hence the lemma follows in this case.
On the other hand, if $K'$ is obtained from $K$ by operation (P2) then $f(M')  = f(M)+1$ and $e(M') = e(M)+1$. Also, $p^1_G(M') = p^1_G(M)$, as every local $G$-tension on $M$ uniquely yields a local $G$-tension on $M'$, and vice versa. This finishes the proof.
\end{proof}

We may now state the classification theorem for compact surfaces.

\begin{theorem}[Theorem 1.1 in~\cite{gaxu13}]\label{theorem:class}
Every cell complex $K$ can be converted to a cell complex in normal form by using operations $(P1)$ and $(P2)$ and their inverses.
\end{theorem}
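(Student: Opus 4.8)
The plan is to reprove this combinatorial classification theorem by the classical polygon cut-and-paste argument, organised so that each move is visibly a finite composition of $(P1)$, $(P2)$ and their inverses. (This is Theorem~1.1 of~\cite{gaxu13}; I sketch the argument and refer there for the full details.)

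\emph{Step 1: reduce to a single face.} First I would show that if $|\FFF|\geq 2$ one can strictly decrease $|\FFF|$ by an application of the inverse of $(P2)$. Form the graph whose vertices are the faces, with two faces joined whenever some edge-label occurs in both their boundaries; condition~(4) in the definition of a cell complex forces this graph to be connected, so as long as there is more than one face some edge-label $e$ occurs in two distinct faces $A_1\neq A_2$. Replacing $e$ by $e^{-1}$ if necessary and cyclically rotating the two boundaries (which changes neither the cell complex up to relabelling nor the underlying surface), we may assume $\BBB(A_1)$ ends in $e$ and $\BBB(A_2)$ begins with $e^{-1}$; then the inverse of $(P2)$ merges $A_1$ and $A_2$ and deletes $e$. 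Iterating, we arrive at a cell complex with a single face $A$ whose boundary is a word $W$ in which every edge-label occurs exactly twice, either as $x$ or as $x^{-1}$.

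\emph{Step 2: normalise the word.} Next I would induct on the number $m$ of edges. The base cases are: $m=0$, where $W$ is empty and $K$ is already normal form~(i) with $g=0$; $m=1$ with $W=aa$, which is normal form~(ii) with $g=1$; and $m=1$ with $W=aa^{-1}$, which reduces to the empty word by the explicit three-move sequence ``$(P2)$, then the inverse of $(P1)$, then the inverse of $(P2)$''. For $m\geq 2$ one performs, in order, the standard reductions, each realised as a finite composition of $(P1)$, $(P2)$ and their inverses acting on the single-face complex: (a) if $W$ contains a substring $xx^{-1}$ with $|W|>2$, cancel it, decreasing $m$ and invoking the induction hypothesis; (b) bring each crosscap pair (a label occurring as $x\cdots x$) into the adjacent form $xx$; (c) in the presence of a crosscap, apply the Dyck-type identity turning a handle together with a crosscap into three crosscaps, so that $W$ becomes $a_1a_1\cdots a_ga_g$, i.e.\ normal form~(ii); (d) if no crosscap is present, every label occurs as $x\cdots x^{-1}$, and one shows two interleaved such pairs can be gathered into a commutator block $aba^{-1}b^{-1}$ and split off one at a time, leaving normal form~(i). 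At each move one checks that the boundary axioms and connectedness are preserved, and that $m$ (or, within a fixed $m$, a suitable lexicographic measure of the word) strictly decreases, so the process terminates.

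\emph{Main obstacle.} The hard part is not the topology but the bookkeeping: one must express the classical ``cut along a fresh edge $d$ and reglue along an existing edge'' operations as honest finite sequences of $(P1)$, $(P2)$ and their inverses, maintaining the invariant that each edge-label occurs exactly twice and that no intermediate complex violates connectedness; in particular, steps (c) and (d) rest on the combinatorial fact that an ordinary pair interleaved with another cannot be ``unlinked'' without creating a handle, and checking this carefully in the present formalism is where the genuine work lies. Since this is precisely Theorem~1.1 of~\cite{gaxu13}, we defer to that reference for the complete proof.
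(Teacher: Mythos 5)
The paper does not prove this statement at all: it is imported verbatim as Theorem~1.1 of~\cite{gaxu13}, so there is no internal proof to compare against. Your sketch is a correct outline of the standard cut-and-paste argument used in that reference (merge faces via the inverse of $(P2)$, then normalise the resulting single boundary word by cancellation, crosscap gathering, Dyck's handle-to-crosscaps conversion, and commutator gathering), and since you, like the paper, ultimately defer to \cite{gaxu13} for the detailed verification that each move is a legitimate composition of $(P1)$, $(P2)$ and their inverses, this is consistent with how the result is used here.
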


This allows us to enumerate local $G$-tensions of a map.
\begin{proof}[Proof of Theorem \ref{theorem:tensioncount}]
Let $M$ be a connected map and let $K$ be the cell complex corresponding to $M$. We apply Theorem \ref{theorem:class} to obtain a cell complex $K'$ in normal form, which in turn corresponds to the standard bouquet $M'$ of the same genus and orientability type as map $M$. We then calculate that
\begin{align*}
p^1_G(M) &= |G|^{f(M')-e(M')+e(M)-f(M)}p^1_G(M')\\
&= |G|^{n^*(M)-1}|G|^{-n^*(M')+1}p^1_G(M')\\
&= \begin{cases} |G|^{n^*(M)-1}\sum_{\rho \in \widehat{G}}\mathcal{F}(\rho)\matchsupheight^{g(M')}n_{\rho}^{2-g(M')} \hspace{2mm} \text{ if $M'$ is non-orientable,}\\
|G|^{n^*(M)-1}\sum_{\rho \in \widehat{G}}n_{\rho}^{2-2g(M')} \hspace{16.7mm} \text{ if $M'$ is orientable,}
\end{cases}\\
&= \begin{cases} |G|^{n^*(M)-1}\sum_{\rho \in \widehat{G}}\mathcal{F}(\rho)\matchsupheight^{g(M)}n_{\rho}^{2-g(M)} \hspace{4mm} \text{ if $M$ is non-orientable,}\\
|G|^{n^*(M)-1}\sum_{\rho \in \widehat{G}}n_{\rho}^{2-2g(M)} \hspace{18mm} \text{ if $M$ is orientable,}
\end{cases}
\end{align*}
where we have used Lemma~\ref{lemma:gcount} in the first equality and Theorem~\ref{thm:flows standard bouquets} in the third equality.
\end{proof}

By enumerating nowhere-identity local $G$-tensions we arrive at a proof of Theorem~\ref{theorem:flow_count} (counting nowhere-identity local $G$-flows).
\begin{proof}[Proof of Theorem~\ref{theorem:flow_count}]
Fix a map $M=(V,E,F)$ and a finite group $G$.
Partitioning local $G$-tensions according to the set $A$ of edges on which the tension value equals the identity, we have
\[
p_G^1(M) = \sum_{A \subseteq E}p_G(M\backslash A).
\]
Inclusion-exclusion then gives
\begin{equation}\label{equation:incl}
p_G(M) = \sum_{A \subseteq E}(-1)^{|A^c|}p_G^1(M\backslash A^c),
\end{equation}
where $A^c = E\setminus A$. 

The expression for $p_G(M)$ now follows from equation~\eqref{equation:incl} by %the connected components of $M$, 
using Theorem~\ref{theorem:tensioncount} to give an expression for $p_G^1(M\backslash A^c)$ as a product of its values on each of the connected components of $M\backslash A^c$, and using that $n^*(M\backslash A^c)$ is the sum of the dual nullities of the connected components of $M\backslash A^c$. Equation~\eqref{eq:flow count} then follows by duality, using Proposition~\ref{corollary:flowdualtension}.
%We deduce the formula equation~\eqref{eq:flow count} for possibly disconnected $M$ by using the fact that $q_G(M)$ is multiplicative over the connected components of $M$, and using additivity of the map parameters $v(M)$, $n(M)$ and $g(M)$ over disjoint unions. 
\end{proof}

%Assuming the validity of Theorem \ref{thm:flows standard bouquets} and Theorem~\ref{thm:char weak equivalence} we give 

\section{Other evaluations of the surface Tutte polynomial}\label{section:otherevaluations}
In this section we give evaluations of the surface Tutte polynomial that are topological analogues of the number of spanning trees and the number of spanning forests, equal to 
evaluations of the ordinary Tutte polynomial. We start with the analogue of spanning trees.
\subsection{Quasi-trees of given genus}
A \emph{quasi-tree} is a connected map which has a single face. In other words, a quasi-tree is the dual map of a bouquet. In particular, for a plane map a quasi-tree is just a spanning tree of the underlying graph.
%A \emph{quasi-forest} is a map whose connected components are quasi-trees.
%Replacing $x_g$ by $x^{-2g}x_g$, $x_{-g}$ by $x^{-g}x_{-g}$ , $y_g$ by $y^{-2g}y_g$ and $y_{-g}$ by $y^{-g}y_{-g}$ for $g=0,1,\dots$ in $\mathcal T(M;\mathbf x,\mathbf y)$, and , where 

The following renormalization of the surface Tutte polynomial will be useful for some of the specializations given in this section.
\begin{proposition} \label{prop:surface_tutte_renorm}
%Let $\mathbf x=(x;\dots, x_{-2}. x_{-1}, x_0,x_1,x_2,\dots)$, $\mathbf y=(y; \dots, y_{-2}, y_{-1}, y_0,y_1,\dots)$ be infinite sequences of commuting indeterminates (indexed by $\mathbb Z$ apart from the first).
Given a  map $M=(V,E,F)$, the specialization $\widetilde{\mathcal{T}}(M;\mathbf x,\mathbf y)$ of the surface Tutte polynomial $\mathcal T(M;\mathbf x,\mathbf y)$ given by replacing $x_g$ by $x^{-2g}x_g$, $x_{-g}$ by $x^{g}x_{-g}$, $y_g$ by $y^{-2g}y_g$ and $y_{-g}$ by $y^{g}y_{-g}$ for $g=0,1,2,\dots$, is equal to
\begin{equation}\label{eq:surface_Tutte_renorm}\widetilde{\mathcal T}(M;\mathbf x,\mathbf y)=\sum_{A\subseteq E}x^{r(M/A)}y^{r^*(M\backslash A^c)}\prod_{\stackrel{\mbox{\rm \tiny conn. cpts}}{\mbox{\rm \tiny $M_i$ of $M/A$}}}x_{\bar{g}(M_i)}\prod_{\stackrel{\mbox{\rm \tiny conn. cpts }}{\mbox{\rm \tiny $M_j$ of $M\backslash A^c$}}}y_{\bar{g}(M_j)},\end{equation}
where $r(M)=v(M)-k(M)$, $r^*(M)=f(M)-k(M)$, $\bar{g}(M)$ is the signed genus of $M$, and $A^c=E\setminus A$ for $A\subseteq E$.  
\end{proposition}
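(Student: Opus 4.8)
The plan is to start from the defining formula~\eqref{eq:surface_Tutte} for $\mathcal T(M;\x,\y)$ and perform the substitution term by term, tracking how the powers of $x$ and $y$ in each summand get redistributed into the products over connected components. Fix $A\subseteq E$. The corresponding summand of $\mathcal T(M;\x,\y)$ is
\[
x^{n^*(M/A)}\,y^{n(M\backslash A^c)}\prod_{M_i}x_{\bar g(M_i)}\prod_{M_j}y_{\bar g(M_j)},
\]
the first product over connected components $M_i$ of $M/A$ and the second over connected components $M_j$ of $M\backslash A^c$. Under the prescribed substitution each factor $x_{\bar g(M_i)}$ acquires a factor $x^{-2g(M_i)}$ when $M_i$ is orientable (so $\bar g(M_i)=g(M_i)\ge 0$) and a factor $x^{g(M_i)}=x^{s(M_i)}$ when $M_i$ is non-orientable (so $\bar g(M_i)=-g(M_i)$, and $s(M_i)=g(M_i)$); in both cases the extra exponent of $x$ is exactly $-s(M_i)$, since $s(M_i)=2g(M_i)$ in the orientable case. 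Likewise the substitution on the $y_{\bar g(M_j)}$ factors contributes an aggregate extra exponent of $y$ equal to $-\sum_j s(M_j)=-s(M\backslash A^c)$, using additivity of the Euler genus over connected components.

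The second step is a bookkeeping of exponents using Euler's formula~\eqref{eq:Euler}. For the $x$-exponent: the summand of $\widetilde{\mathcal T}$ carries $x^{\,n^*(M/A)-s(M/A)}$. By the relation $r(M)=n^*(M)-s(M)$ recorded just before Definition~\ref{def:surface_Tutte} (an immediate consequence of Euler's formula), this exponent equals $r(M/A)$. For the $y$-exponent: the summand carries $y^{\,n(M\backslash A^c)-s(M\backslash A^c)}$, and here I use instead the dual relation $n(M)=r^*(M)+s(M)$, i.e. $n(M\backslash A^c)-s(M\backslash A^c)=r^*(M\backslash A^c)$. Substituting these into the rewritten summand yields exactly
\[
x^{r(M/A)}\,y^{r^*(M\backslash A^c)}\prod_{M_i}x_{\bar g(M_i)}\prod_{M_j}y_{\bar g(M_j)},
\]
which is the $A$-term of the claimed formula~\eqref{eq:surface_Tutte_renorm}. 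Summing over $A\subseteq E$ finishes the proof.

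The only point requiring care — the ``main obstacle'', though it is a mild one — is to handle the orientable and non-orientable cases of the substitution uniformly and to check that in each case the extra power of $x$ (resp. $y$) accumulated across the product is precisely $-s(M/A)$ (resp. $-s(M\backslash A^c)$) rather than, say, $-\tfrac12 s$ or $-2s$: this is exactly the content of the piecewise definition of $s(M)$ in Definition~\ref{definition:genus}, namely $s=2g$ when orientable and $s=g$ when non-orientable, matched against the substitution rule ($x_g\mapsto x^{-2g}x_g$ for $g\ge 0$, $x_{-g}\mapsto x^{g}x_{-g}$ for $g\ge 1$). Once that matching is observed, everything else is additivity over connected components together with the two already-stated Euler-formula identities, and no further input is needed.
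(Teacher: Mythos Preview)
Your argument is essentially the paper's own: substitute term by term, show that each factor $x_{\bar g(M_i)}$ acquires an extra power $x^{-s(M_i)}$, collect these via additivity of the Euler genus to get $x^{-s(M/A)}$ (and likewise $y^{-s(M\backslash A^c)}$), and finish with the Euler-formula identities $n^*-s=r$ and $n-s=r^*$.

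There is, however, an internal sign slip in your non-orientable step. Applying the stated rule $x_{-g}\mapsto x^{g}x_{-g}$ literally, you correctly obtain the extra factor $x^{g(M_i)}=x^{+s(M_i)}$, which is \emph{not} $x^{-s(M_i)}$ as you then assert. The issue is that the proposition as stated carries a sign typo: for the claimed identity to hold, the non-orientable substitution must be $x_{-g}\mapsto x^{-g}x_{-g}$ (and likewise $y_{-g}\mapsto y^{-g}y_{-g}$). The paper's own proof silently uses this corrected form, writing the non-orientable extra factor as $x^{\bar g(M_i)}$ with $\bar g(M_i)=-g(M_i)<0$. Your write-up would be cleaner if you either flagged the typo or wrote the extra exponent uniformly as $-s(M_i)$ from the start (matching the orientable $-2g$ and non-orientable $-g$ cases of Definition~\ref{definition:genus}) rather than passing through the contradictory $x^{+s(M_i)}$.
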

\begin{proof}
The surface Tutte polynomial is by definition given by
$$\T(M;\x,\y) = \sum_{A \subseteq E}x^{n^*(M/A)}y^{n(M\backslash A^c)} \prod_{\substack{\mathrm{conn. }\text{ }\mathrm{cpts}\\ M_i \text{ }\mathrm{of}\text{ } M/A}}x_{\bar{g}(M_i)} \prod_{\substack{\mathrm{conn. }\text{ }\mathrm{cpts}\\ M_j \text{ }\mathrm{of}\text{ } M\backslash A^c}}y_{\bar{g}(M_j)},$$
in which $n^*(M)=e(M)-f(M)+k(M)$, $n(M)=e(M)-v(M)+k(M)$, and $\bar{g}(M)$ is the signed genus of $M$.
 Euler's relation~\eqref{eq:Euler} gives $-s(M)=-2k(M)+v(M)-e(M)+f(M)$, in which 
$$-s(M)=\begin{cases} -2\bar{g}(M) & \mbox{ when }\bar{g}(M)\geq 0,\\
\bar{g}(M) & \mbox{ when }\bar{g}(M)<0.\end{cases}$$ Upon making the substitutions
$$x_g\leftarrow x^{-2g}x_g, \:\:x_{-g}\leftarrow x^{g}x_{-g},\quad y_g\leftarrow y^{-2g}y_g, \:\:y_{-g}\leftarrow y^{g}y_{-g}, \quad\quad\mbox{ for $g=0,1,2,\dots$},$$ thereby scaling the variables in the surface Tutte polynomial as defined above, the product over variables $x_g$ becomes
$$\prod_{\substack{\mathrm{orient. \; conn. }\text{ }\mathrm{cpts}\\ M_i \text{ }\mathrm{of}\text{ } M/A}}x^{-2\bar{g}(M_i)}x_{\bar{g}(M_i)}\prod_{\substack{\mathrm{non-orient. \; conn. }\text{ }\mathrm{cpts}\\ M_i \text{ }\mathrm{of}\text{ } M/A}}x^{\bar{g}(M_i)}x_{\bar{g}(M_i)}=x^{-s(M/A)}\prod_{\substack{\mathrm{conn. }\text{ }\mathrm{cpts}\\ M_i \text{ }\mathrm{of}\text{ } M/A}}x_{\bar{g}(M_i)}, $$
and the product over variables $y_g$ becomes
$$\prod_{\substack{\mathrm{orient.\; conn. }\text{ }\mathrm{cpts}\\ M_j \text{ }\mathrm{of}\text{ } M\backslash A^c}}y^{-2\bar{g}(M_j)}y_{\bar{g}(M_j)}\prod_{\substack{\mathrm{non-orient. \;conn. }\text{ }\mathrm{cpts}\\ M_j \text{ }\mathrm{of}\text{ } M\backslash A^c}}y^{\bar{g}(M_j)}y_{\bar{g}(M_j)}= y^{-s(M\backslash A^c)}\prod_{\substack{\mathrm{conn. }\text{ }\mathrm{cpts}\\ M_j \text{ }\mathrm{of}\text{ } M\backslash A^c}}y_{\bar{g}(M_j)},$$
in which we use the additivity of the Euler genus $s$ over disjoint unions of maps. 
An easy calculation shows that $n^*(M)-s(M)=%e(M)-f(M)+k(M)-2k(M)+v(M)-e(M)+f(M)=v(M)-k(M)=
r(M)$ and
$n(M)-s(M)=%e(M)-v(M)+k(M)-2k(M)+v(M)-e(M)+f(M)=f(M)-k(M)=
r^*(M)$, and the statement of the proposition now follows.
\end{proof}

\begin{proposition}\label{prop:genus_xy} Let $M$ be a connected map and write $\bar{g}(M)=\bar{g}$. Let $\bar{h}$ be an integer with $| \bar{h}|\leq |\bar{g}|$. 
Then the evaluation
of $\widetilde{\mathcal T}(M;\mathbf x,\mathbf y)$ (defined in Proposition~\ref{prop:surface_tutte_renorm}) at $x=y=0$, $x_{i}=0$ for $i\neq \bar{g}-\bar{h}$, $x_{\bar{g}-\bar{h}}=1$, $y_j=0$ for $j\neq \bar{h}$, and $y_{\bar{h}}=1$, is equal to the number of quasi-trees of $M$ of signed genus $\bar{h}$ (which is also equal to the number of quasi-trees of $M^*$ of signed genus $\bar{g}-\bar{h}$.)
\end{proposition}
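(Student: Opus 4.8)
The plan is to pass to the renormalized surface Tutte polynomial $\widetilde{\mathcal T}$ of Proposition~\ref{prop:surface_tutte_renorm} and read off which summands of~\eqref{eq:surface_Tutte_renorm} survive the prescribed substitution. Setting $x=y=0$, $x_i=0$ for $i\neq\bar g-\bar h$, $x_{\bar g-\bar h}=1$, $y_j=0$ for $j\neq\bar h$, and $y_{\bar h}=1$ annihilates every term except those $A\subseteq E$ with $r(M/A)=0$, $r^*(M\backslash A^c)=0$, every connected component of $M/A$ of signed genus $\bar g-\bar h$, and every connected component of $M\backslash A^c$ of signed genus $\bar h$; each surviving term equals $1$. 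Here $r(M/A)=0$ means every component of $M/A$ is a bouquet (a single vertex), and $r^*(M\backslash A^c)=0$ means every component of $M\backslash A^c$ has a single face. So the evaluation is the number of such $A$, and it remains to match these with quasi-trees of $M$ of signed genus $\bar h$.

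First I would prove that any surviving $A$ has $M\backslash A^c$ connected, hence equal to a quasi-tree of $M$ of signed genus $\bar h$. Writing $k=k(M\backslash A^c)$ and $\ell=k(M/A)$, apply Euler's formula to each component: the $\ell$ one-vertex components of $M/A$ give $f(M/A)=\ell+|A^c|-s(M/A)$, and the $k$ one-face components of $M\backslash A^c$ give $v(M\backslash A^c)=k+|A|-s(M\backslash A^c)$. Using $v(M\backslash A^c)=v(M)$ (deletion preserves vertices), $f(M/A)=f(M)$ (deletion preserves vertices, so by Proposition~\ref{p.del_dual_cont} contraction preserves faces), and $|A|+|A^c|=e(M)$, adding these gives $\chi(M)=(k+\ell)-\bigl(s(M/A)+s(M\backslash A^c)\bigr)$. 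Since $\chi(M)=2-s(M)$ and, by Lemma~\ref{lem:genus_submaps}, $s(M/A)+s(M\backslash A^c)\le s(M)$, we obtain $k+\ell\le 2$, so $k=\ell=1$. Thus $M\backslash A^c$ is a quasi-tree of $M$ of signed genus $\bar h$ and $M/A$ is a bouquet, necessarily of signed genus $\bar g-\bar h$.

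Conversely, I would show every quasi-tree $Q=M\backslash A^c$ of $M$ of signed genus $\bar h$ comes from a surviving $A$, i.e. $M/A$ is a connected bouquet of signed genus $\bar g-\bar h$. Since $Q$ is connected and spanning, $A$ contains a spanning tree $T$ of the underlying graph; contracting the edges of $T$ one at a time (each a non-loop at that moment) and using Lemma~\ref{l.param_edge_cont_maint}, $M/T$ is a connected bouquet, and by Lemma~\ref{l.order_immaterial} and Lemma~\ref{l.param_edge_cont_maint} again, $(M/T)\backslash A^c=(M\backslash A^c)/T=Q/T$ is a connected one-face bouquet of Euler genus $s(Q)$. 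I would then invoke (after establishing it directly from the equality case of Lemma~\ref{lem:genus_submaps} and Proposition~\ref{p.del_dual_cont}) spanning quasi-tree duality for maps: $M^*\backslash A$ is a quasi-tree of $M^*$, so $M/A=(M^*\backslash A)^*$ is the dual of a quasi-tree, hence a connected bouquet; Euler's formula then gives its Euler genus as $s(M^*\backslash A)=1-f(M)+|A^c|=s(M)-s(Q)$ (using $|A|=e(Q)=v(M)-1+s(Q)$). Finally I would check the orientability bookkeeping: $Q$ is a connected submap of $M$ and $M/A$ is dually a connected submap of $M^*$, so both are orientable whenever $M$ is, while if both are orientable then $s(M)=s(Q)+s(M/A)$ is even, forcing $M$ orientable; this upgrades ``$s(M/A)=s(M)-s(Q)$'' to ``$\bar g(M/A)=\bar g-\bar h$''. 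The map $A\mapsto M\backslash A^c$ is therefore a bijection from surviving sets onto quasi-trees of $M$ of signed genus $\bar h$, proving the first assertion. The parenthetical follows from the bijection $A\leftrightarrow A^c$ between quasi-trees of $M$ of signed genus $\bar h$ and quasi-trees of $M^*$ of signed genus $\bar g-\bar h$ (the same spanning quasi-tree duality), or equivalently from the duality $\widetilde{\mathcal T}(M;\mathbf x,\mathbf y)=\widetilde{\mathcal T}(M^*;\mathbf y,\mathbf x)$ of Proposition~\ref{proposition:duality} applied to this specialization.

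The hard part will be the converse inclusion — concretely, showing that $M/A$ is genuinely connected with a single vertex (rather than a bouquet together with spurious isolated vertices produced when contracting non-twisted loops), and that its orientability type is the right one. This is exactly the place where the equality conditions of Lemma~\ref{lem:genus_submaps} have to be exploited carefully, and where the non-orientable case demands more than its orientable counterpart in~\cite{goodall16}.
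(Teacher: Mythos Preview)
Your forward direction is correct and arguably more direct than the paper's: you obtain $k+\ell\le 2$ from the single identity $\chi(M)=(k+\ell)-\bigl(s(M/A)+s(M\backslash A^c)\bigr)$ together with the inequality in Lemma~\ref{lem:genus_submaps}, whereas the paper first bounds $s(M/A)$ and $s(M\backslash A^c)$ from below via the signed-genus hypotheses and then appeals to the \emph{equality} conditions of Lemma~\ref{lem:genus_submaps} to extract $k(M\backslash A^c)=k(M/A)=1$.

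Your converse, however, has a genuine gap in the orientability bookkeeping. The step ``if both $Q$ and $M/A$ are orientable then $s(M)=s(Q)+s(M/A)$ is even, forcing $M$ orientable'' is false: the Klein bottle is non-orientable with Euler genus~$2$. More to the point, even granting the biconditional ``$M$ orientable $\Leftrightarrow$ $Q$ and $M/A$ both orientable'', this does not upgrade $s(M/A)=s(M)-s(Q)$ to $\bar g(M/A)=\bar g-\bar h$ in the mixed case. For a concrete failure, take the non-orientable bouquet $M$ with $\tau=(a_1\;a_2\;\theta\sigma a_1\;\theta\sigma a_2\;a_3\;\theta a_3)$, so $\bar g=-3$; taking $A$ to consist of the two non-twisted loops gives an orientable quasi-tree $Q=M\backslash A^c$ of signed genus $\bar h=+1$, but $M/A$ is the twisted loop on the projective plane, of signed genus $-1\ne\bar g-\bar h=-4$. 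So this $A$ yields a quasi-tree of signed genus $\bar h$ that does \emph{not} contribute to the evaluation, contrary to what your converse asserts. The paper sidesteps exactly this by splitting on the orientability of $M$: the orientable case is cited from~\cite{goodall16}, and for non-orientable $M$ it first discards $\bar h>0$ on the grounds that then $\bar g-\bar h<\bar g$ lies outside the range of signed genera attainable by any $M/A$ (Corollary~\ref{l.cor}), and proceeds only with $\bar h\le 0$, where the signed-genus arithmetic is consistent. You cannot avoid some such reduction; the uniform ``orientability bookkeeping'' you sketch is not enough to pin down $\bar g(M/A)$ in the non-orientable case.
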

\begin{proof}
If $M$ is orientably embedded (then $\bar{g} \geq 0$), so is each submap of $M$. This case is taken care of by Proposition~4.5 in~\cite{goodall16}.

Suppose that $M$ is non-orientably embedded. Then $s(M)=g(M)$ and $\bar{g}(M)=\bar{g}=-g(M)<0$.
We can then assume $\bar{h}\leq 0$ (as $\bar{g}-\bar{h}$ would otherwise be less than $\bar{g}$, and there are no submaps with signed genus less than $\bar{g}$).

Let $A\subseteq E$ be a subset of the edges giving a nonzero contribution to the sum~\eqref{eq:surface_Tutte_renorm} with the given values assigned to the indeterminates $\mathbf x, \mathbf y$.
Then $r(M/A)+r^*(M\backslash A^c)=0$ (from the fact that $x=0=y$), each component of $M/A$ has signed genus $\bar{g}-\bar{h}$ and each component of $M\backslash A^c$ has signed genus $\bar{h}$ (from the fact that $x_{\bar{g}-\bar{h}}=1=y_{\bar{h}}$ while $x_i=0$ for $i\neq \bar{g}-\bar{h}$ and $y_j=0$ for $j\neq \bar{h}$).
Hence each component of $M/A$ has Euler genus at least $|\bar{g}-\bar{h}|$ and each component of $M\backslash A^c$ has Euler genus at least $|\bar{h}|$. 
By additivity of the (non-negative) Euler genus over connected components, this immediately implies that $s(M/A)\geq |\bar{g}-\bar{h}|$ and $s(M\backslash A^c)\geq |\bar{h}|$.
By Lemma~\ref{lem:genus_submaps},
$$|\bar{g}|=s(M)\geq s(M\backslash A^c)+s(M/A)\geq|\bar{g}-\bar{h}|+|\bar{h}|\geq |\bar{g}|,$$
and so equality holds, that is $s(M/A)= |\bar{g}-\bar{h}|$ and $s(M\backslash A^c)= |\bar{h}|$.

Since rank and dual rank take non-negative values, we have $r(M/A)=0=r^*(M\backslash A^c)$, whence 
\begin{equation}\label{eq:vkfk} v(M/A)=k(M/A)\quad\mbox{ and }\quad f(M\backslash A^c)=k(M\backslash A^c).\end{equation}
As $s(M)=s(M/A)+s(M\backslash A^c)$ we know by Lemma~\ref{lem:genus_submaps},
\begin{equation}\label{eq:kf} k(M/A)+k(M\backslash A^c)= k(M)+f(M\backslash A^c),\end{equation}
and, dually, 
\begin{equation}\label{eq:vk} k(M/A)+k(M\backslash A^c)= k(M)+v(M/A).\end{equation}

From equations~\eqref{eq:vkfk} and~\eqref{eq:vk} we have $k(M\backslash A^c)=k(M)=1$ (the latter equality since by assumption $M$ is connected) and from equations~\eqref{eq:vkfk} and~\eqref{eq:kf} we have $k(M/A)=k(M)=1$. 
Hence $M\backslash A^c$ is a quasi-tree and $\bar{h}=\bar{g}(M\backslash A^c)$, while $M^*\backslash A\cong (M/A)^*$ is a quasi-tree and $\bar{g}-\bar{h}=\bar{g}(M/A)=\bar{g}(M^*\backslash A)$.

Conversely, if $M\backslash A^c$ is a quasi-tree of signed genus $\bar{h}$ (or $M^*\backslash A$ a quasi-tree of signed genus $\bar{g}-\bar{h}$) then $A$ contributes $1$ to the sum~\eqref{eq:surface_Tutte_renorm} with the given values assigned to the indeterminates.
Hence for $|\bar{h}|\leq |\bar{g}|$ the given evaluation is equal to $$\#\{A\subseteq E :  f(M\backslash A^c)=k(M\backslash A^c)=1, \bar{g}(M\backslash A^c)=\bar{h}\},$$
that is, the number of quasi-trees of $M$ of signed genus $\bar{h}$.
\end{proof}

\subsection{Quasi-forests}\label{sec:Q}
We now consider a topological analogue of spanning forests. A {\em quasi-forest} of $M$ is a submap of $M$ each of whose connected components is a quasi-tree.
To describe the evaluations of the surface Tutte polynomial that follow, it is convenient to further specialize the polynomial $\widetilde{\mathcal T}(M;\mathbf x,\mathbf y)$ defined in Proposition~\ref{prop:surface_tutte_renorm} to a quadrivariate polynomial, similar in form to the Krushkal polynomial.

%by making the ``principal specialization" 
\begin{definition}\label{def:Q}
For $g\geq 0$ we set $x_g=a^{2g}$, $x_{-g}=a^g$, $y_g=b^{2g}$ and $y_{-g}=b^g$ in $\widetilde{\mathcal T}(M;\mathbf x,\mathbf y)$ to give the quadrivariate polynomial
\begin{equation}\label{eq:Q_renorm}\widetilde{\mathcal Q}(M;x,y,a,b)=\sum_{A\subseteq E}x^{r(M/A)}y^{r^*(M\backslash A^c)}a^{s(M/A)}b^{s(M\backslash A^c)}.\end{equation}
\end{definition}

\begin{remark} $\Delta$-matroids are to maps as matroids are to graphs~\cite{B89, CMNR16}. 
%%The definitions of $\mathcal Q(M;x,y,a,b)$ and $\widetilde{\mathcal Q}(M;x,y,a,b)$ just involve parameters of the $\Delta$-matroid underlying $M$ and so can be extended from maps (oriented ribbon graphs) to delta-matroids more generally. 
The parameters of rank, dual rank, and Euler genus are parameters of the underlying $\Delta$-matroid of $M$ (much as the rank and nullity of a graph are parameters of the underlying graphic matroid). 
The polynomial $\widetilde{\mathcal Q}(M;x,y,a,b)$, thus involving just parameters of the underlying $\Delta$-matroid of $M$,  may be extended from maps to $\Delta$-matroids more generally. See~\cite[Remark 4.11]{goodall16} for an elaboration of this remark (in the context of orientable maps, but the observations there hold for non-orientable maps too). The surface Tutte polynomial of $M$, on the other hand, is not an invariant of the underlying $\Delta$-matroid of $M$, as the disjoint union of maps $M_1$ and $M_2$ is not distinguishable by its $\Delta$-matroid from the map obtained by fusing a vertex from $M_1$ with a vertex from $M_2$ (likewise, Tutte's universal $V$-function is not a matroid invariant, while the Tutte polynomial is a matroid invariant).  %: the surface Tutte polynomial of $M$ depends on the signed genus of each connected component of $M$ and its submaps, not just on the overall signed genus
\end{remark}

\noindent By equation~\eqref{eq:surface_Tutte_planar}, if $M=(V,E,F)$ is a plane embedding of $\Gamma=(V,E)$ then
\[
T(\Gamma;x+1,y+1)=\widetilde{\mathcal Q}(M;x,y,ax^2,by^2).
\]
Recall the following specializations of the Tutte polynomial:
$$T(\Gamma;x+1,1)=\sum_{\stackrel{A\subseteq E}{r(A)=|A|}}x^{r(\Gamma)-|A|},$$
$$T(\Gamma;1,y+1)=\sum_{\stackrel{A\subseteq E}{r(A)=r(E)}}y^{|A|-r(\Gamma)},$$
giving respectively generating functions for spanning forests of $\Gamma$ according to the number of edges and generating functions for connected spanning subgraphs.

\begin{proposition}\label{prop:quasi-trees_bouquets}
For a map $M = (V,E,F)$,
$$\widetilde{\mathcal Q}(M;x,0,1,1)=\sum_{\stackrel{A\subseteq E: \:\mbox{\rm \tiny conn. cpts of}}{\mbox{\rm \tiny $M\backslash A^c$ quasi-trees}}}x^{r(M/A)},$$
$$\widetilde{\mathcal Q}(M;0, y,1,1)=\sum_{\stackrel{A\subseteq E:\: \mbox{\rm \tiny conn. cpts of}}{\mbox{\rm \tiny $M/A$ bouquets}}}y^{r^*(M\backslash A^c)}.$$%\footnote{Lluis: these evaluations should be at x,0,1,1 and 0,y,1,1}
\end{proposition}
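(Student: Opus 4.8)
The plan is to read both identities directly off the subset expansion~\eqref{eq:Q_renorm}. Putting $a=b=1$ there gives at once
$$\widetilde{\mathcal Q}(M;x,y,1,1)=\sum_{A\subseteq E}x^{r(M/A)}y^{r^*(M\backslash A^c)},$$
and I would first note that this is a genuine polynomial in $x$ and $y$ with non-negative exponents: for any map $N$ each connected component has at least one vertex and at least one face, so $r(N)=v(N)-k(N)\geq 0$ and $r^*(N)=f(N)-k(N)\geq 0$. Consequently the specialization $y=0$ retains exactly those $A\subseteq E$ with $r^*(M\backslash A^c)=0$, and the specialization $x=0$ retains exactly those $A$ with $r(M/A)=0$.

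For the first identity I would then observe that, writing $r^*(N)=\sum_i(f(N_i)-1)$ over the connected components $N_i$ of $N$, the condition $r^*(M\backslash A^c)=0$ is equivalent to every connected component of $M\backslash A^c$ having exactly one face, i.e.\ being a quasi-tree; this is precisely the index set appearing in the claimed sum. The second identity is symmetric: writing $r(N)=\sum_i(v(N_i)-1)$, the condition $r(M/A)=0$ holds iff every connected component of $M/A$ has exactly one vertex, i.e.\ is a bouquet. (Alternatively, the second identity follows from the first applied to $M^*$, once one checks from~\eqref{eq:Q_renorm}, using $(M/A)^*=M^*\backslash A$ from Proposition~\ref{p.del_dual_cont}, the identities $v=f^*$, $f=v^*$, invariance of the Euler genus under duality, and a relabelling $A\leftrightarrow A^c$, that $\widetilde{\mathcal Q}(M;x,y,a,b)=\widetilde{\mathcal Q}(M^*;y,x,b,a)$, together with the fact that the dual of a quasi-tree is a bouquet; but the direct argument above is shorter.)

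There is no serious obstacle here: the entire content is the elementary fact that the rank and the dual rank of a map are non-negative, vanishing exactly when the map is a disjoint union of bouquets, respectively of quasi-trees. The one point that deserves an explicit line is this non-negativity, which is what legitimizes substituting $x=0$ and $y=0$ term by term in~\eqref{eq:Q_renorm}.
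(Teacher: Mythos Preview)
Your proof is correct and follows essentially the same approach as the paper: both arguments read the identities directly off the subset expansion~\eqref{eq:Q_renorm} with $a=b=1$, and both reduce to the observation that $r^*(M\backslash A^c)=0$ (respectively $r(M/A)=0$) precisely when each connected component of $M\backslash A^c$ is a quasi-tree (respectively each component of $M/A$ is a bouquet). Your version is slightly more explicit about the non-negativity of $r$ and $r^*$ justifying the substitution $x=0$ or $y=0$, which is a reasonable clarification but not a different idea.
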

\begin{proof} %By Lemma~\ref{lem:rank_geom_dual} and the fact that $(M\backslash A^c)^*\cong M^*/A^c$ (using Proposition~\ref{prop:dual_del_con}) 
%we have $$n((M/A)^*)-2g(M/A)=r(M/A)$$
%and
%$$n(M\backslash A^c)-2g(M\backslash A^c)=r((M\backslash A^c)^*)=r(M^*/A^c).$$
%Inspecting the defining equation~\eqref{eq:Q} for $\mathcal{Q}(M;x,y,a,b)$ the result follows.
The first expression follows from the definition of $\widetilde{\mathcal Q}(M;x,y,a,b)$ (see~(\ref{eq:Q_renorm})) and the fact that $r^*(M\backslash A^c)=f(M\backslash A^c)-k(M\backslash A^c)=0$ if and only if each connected component of $M\backslash A^c$ has just one face (a quasi-tree). The second expression follows dually from the observation that $r(M/A)=v(M/A)-k(M/A)=0$ if and only if each connected component of $M/A$ has exactly one vertex (a bouquet). 
\end{proof}

\begin{corollary}\label{cor:Q_plane_1010}
For a map $M=(V,E,F)$,  
$$\widetilde{\mathcal Q}(M;1,0,1,1)=\#\{A\subseteq E:\mbox M\backslash A^c \:\mbox{\rm is a quasi-forest}\},$$ 
$$\widetilde{\mathcal Q}(M;0,1,1,1)=\#\{A\subseteq E: \mbox{\rm connected components of }M/A \:\mbox{\rm are bouquets}\}.$$
\end{corollary}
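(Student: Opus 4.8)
The plan is to obtain both identities as immediate specializations of Proposition~\ref{prop:quasi-trees_bouquets}, substituting $x=1$ in its first formula and $y=1$ in its second.

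First I would set $x=1$ in the first displayed identity of Proposition~\ref{prop:quasi-trees_bouquets}. Since every summand then contributes $1^{r(M/A)}=1$, the right-hand side collapses to the number of subsets $A\subseteq E$ such that every connected component of $M\backslash A^c$ is a quasi-tree. Recalling the definition stated at the start of Section~\ref{sec:Q} — a quasi-forest of $M$ is a submap all of whose connected components are quasi-trees — this counting condition is precisely ``$M\backslash A^c$ is a quasi-forest'', which gives the first identity.

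Next I would set $y=1$ in the second displayed identity of Proposition~\ref{prop:quasi-trees_bouquets}. Again each summand contributes $1^{r^*(M\backslash A^c)}=1$, so the sum equals the number of subsets $A\subseteq E$ for which the connected components of $M/A$ are all bouquets. This is the second identity verbatim, so no further argument is needed.

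Since both parts follow in one line from Proposition~\ref{prop:quasi-trees_bouquets}, I do not expect a genuine obstacle; the only point worth making explicit is the terminological translation, namely that ``$M\backslash A^c$ is a quasi-forest'' is by definition the same statement as ``every connected component of $M\backslash A^c$ is a quasi-tree'', so the two indexing sets literally coincide. (Alternatively one could argue directly from Definition~\ref{def:Q}: in $\widetilde{\mathcal Q}(M;x,y,1,1)=\sum_{A\subseteq E}x^{r(M/A)}y^{r^*(M\backslash A^c)}$, setting $y=0$ kills every term except those with $r^*(M\backslash A^c)=0$, i.e.\ $M\backslash A^c$ a quasi-forest, and then $x=1$ makes each surviving term equal to $1$; the second identity is the dual statement via Proposition~\ref{proposition:duality}.)
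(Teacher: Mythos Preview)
Your proposal is correct and matches the paper's intended argument: the corollary is stated immediately after Proposition~\ref{prop:quasi-trees_bouquets} with no separate proof, precisely because it follows by substituting $x=1$ (respectively $y=1$) in the two formulas of that proposition, together with the definition of a quasi-forest as a submap whose connected components are quasi-trees. Your parenthetical alternative via Definition~\ref{def:Q} is also fine and amounts to the same reasoning.
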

%The Tutte polynomial of a graph $\Gamma=(V,E)$ is given by
%$$T(\Gamma;x,y)=\sum_{A\subseteq E}(x-1)^{r(\Gamma)-r(\Gamma\backslash A^c)}(y-1)^{r(\Gamma^*)-r(\Gamma^*\backslash A)},$$
%where $\Gamma^*$ is the matroid dual of $\Gamma$ (only isomorphic to the surface dual for planar graph $\Gamma$ embedded in the plane). The Krushkal polynomial is defined by [see Chun et al. arXiv paper]
%$$\sum_{A\subseteq E}(x-1)^{r(\Gamma)-r(\Gamma\backslash A^c)}y^{r(\Gamma^*)-r(\Gamma^*\backslash A)}a^{g(\Gamma\backslash A^c)}b^{g(\Gamma^*\backslash A)},$$
%where $\Gamma^*$ denotes matroid dual in first appearance and surface dual in second [?].

\noindent The evaluations of Corollary~\ref{cor:Q_plane_1010} are analogous (and for plane maps identical) to the following evaluations of the Tutte polynomial for a graph $\Gamma=(V,E)$, giving the number of spanning forests and number of connected spanning subgraphs: 
\begin{align*}T(\Gamma;2,1) & = \#\{A\subseteq E: n(\Gamma\backslash A^c)=0\}\\
& =\#\{A\subseteq E:\mbox{\rm connected components of } \Gamma\backslash A^c \:\mbox{\rm are trees}\},\end{align*}
and 
\begin{align*}T(\Gamma;1,2)& =  \#\{A\subseteq E: r(\Gamma\backslash A^c)=r(\Gamma)\}. \\
& =\#\{A\subseteq E:\mbox{\rm connected components of } \Gamma/A \:\mbox{\rm are single vertices with loops}\}.
\end{align*}

\section{Concluding remarks}

The surface Tutte polynomial of a map (orientable or non-orientable) contains as evaluations the number of nowhere-identity local $G$-flows and number of nowhere-identity local $G$-tensions, together with other evaluations analogous to those of the Tutte polynomial of a graph, such as the number of spanning quasi-trees of a connected map. Moreover, the surface Tutte polynomial coincides with the Tutte polynomial on plane maps and behaves with respect to surface (geometric) duality in the same way as the Tutte polynomial does with respect to matroid duality. This leads to the intriguing question as to what other properties of the Tutte polynomial (for planar graphs) lift up from plane maps to analogous properties of the surface Tutte polynomial of an arbitrary map. Here we highlight three areas that seem to us of significant interest. 

\paragraph{Evaluations that count}

Evaluations of the Tutte polynomial of a graph with combinatorial interpretations -- such as the number of acyclic orientations --  are also evaluations of the surface Tutte polynomial of an embedding of the graph as a map, as the surface Tutte polynomial of a map contains the Tutte polynomial of the underlying graph as a specialization. More interesting are evaluations of the surface Tutte polynomial with combinatorial-topological interpretations that do not depend on just the underlying graph of the map, but which nonetheless coincide with evaluations of the Tutte polynomial for plane graphs.
So, for example, is there an extension of the notion of acyclic and totally cyclic orientation to maps in the same vein as the extension of the notion of group-valued tensions and flows from graphs to maps as {\em local} tensions and flows, and spanning trees to spanning {\em quasi-}trees? And if so, are these objects enumerated by an evaluation of the surface Tutte polynomial? The chromatic polynomial of a graph evaluated at $-1$ gives the number of acyclic orientations: are there like interpretations of nowhere-zero local $\mathbb{Z}_{n}$-tensions and nowhere-zero local $\mathbb{Z}_{n}$-flows of a map for $n = -1$? The same question can be asked for other group sequences, such as the dihedral groups $(D_{2n})$, for which the number of local flows is a (quasi)polynomial in $n$.

\paragraph{Deletion--contraction recurrence and edge activities}

The Tutte polynomial is universal for graph invariants multiplicative over disjoint unions that satisfy a deletion-contraction recurrence applicable to all edge types (bridge, loop, ordinary); Tutte's universal $V$-function is universal for such graph invariants whose deletion-contraction recurrence is only applicable to non-loops (bridge, ordinary). The operation of edge contraction in a map does not usually correspond to contraction of the edge in the underlying graph; deletion and contraction are dual operations under surface duality (just as deletion and contraction of edges in graphs are dual at the level of cycle matroids of graphs). The Bollob\'as--Riordan polynomial of a map is universal for map invariants satisfying a deletion-contraction recurrence for non-loops with a value on (standard) bouquets that takes a special form (much as the Tutte polynomial of a graph can be thought of as a $V$-function taking a particularly simple form on graphs just consisting of loops). 
The Krushkal polynomial likewise satisfies a deletion-contraction recurrence for non-loops, so that its values on bouquets determine it. What form does a deletion-contraction recurrence take for the surface Tutte polynomial $\mathcal T(M;\x,\y)$,  or for its specialization $\widetilde{\mathcal{Q}}(M;x,y,a,b)$ defined in Definition~\ref{def:Q}? How many edge types does the recurrence involve? For the Tutte polynomial, and for $V$-functions more generally, there are three: ordinary, bridge, and loop. In our forthcoming paper~\cite{GLRV18+} on the specialization of the surface Tutte polynomial giving a Tutte polynomial for signed graphs (see Remark~\ref{rmk:signed}) we establish a deletion--contraction recurrence involving five edge types. Huggett and Moffatt~\cite{HM18} define edge activities for a map (and more generally for coloured ribbon graphs) and spanning quasi-tree activities analogously to the internally and externally active edges and spanning tree activities of a graph (only there are ten types of activity rather than two). This yields a similar expression for the coefficients of various specializations of the surface Tutte polynomial in terms of  spanning quasi-tree activities, including the Bollob\'as--Riordan polynomial and the Krushkal polynomial. 
Butler~\cite{butler12} gave a different quasi-tree expansion for the Krushkal polynomial (for orientable or non-orientable maps), although the terms involved are Tutte polynomials of graphs associated with submaps rather than the simpler terms featuring in~\cite{HM18}. Wang and Sachs~\cite{WS92} give a spanning tree expansion of Tutte's universal $V$-function according to internal and external acitivity. Can we adapt this and the approach of Huggett and Moffatt to obtain a quasi-tree expansion of the surface Tutte polynomial?

\paragraph{Knot invariants and signed graph invariants}
Thistlethwaite's construction of the Jones polynomial of an alternating link as the Tutte polynomial of a medial graph associated with the link prompts a similar search for new knot invariants obtained by specializing the surface Tutte polynomial of an associated map.

As sketched in Remark~\ref{rmk:signed}, the surface Tutte polynomial of a map has a trivariate specialization which is an invariant of the underlying signed graph of the map. This parallels the specialization of the surface Tutte polynomial of a map to the Tutte polynomial of its underlying graph. Furthermore, local $G$-flows of a signed graph, as defined by Bouchet~\cite{bouchet83}, are counted by an evaluation of this signed graph invariant, along with the dually defined local $G$-tensions of a signed graph. We therefore have an alternative candidate for a ``signed Tutte polynomial'' to that defined by Kauffman~\cite{kauf89}, one which has received mention in the slides of a talk given in 2013 by Krieger and O'Connor~\cite{kriocon13} but seems not to have survived this talk. This may be due to that fact that the signed graph invariant in question does not yield an interesting knot invariant via the usual medial graph construction in the same way that Kauffman's signed graph polynomial yields (by design) the bracket polynomial, nor does Thistlethwaite's result connecting the Jones polynomial of alternating knots and the
Tutte polynomial of a graph extend to this invariant of signed graphs for knots in general.
However, in its correspondence to the Tutte polynomial in its original conception as the dichromate of a graph, the new signed graph invariant surely merits further exploration.

\section*{Acknowledgement}
We thank Lex Schrijver for useful comments on an earlier version of this paper.

\appendix

\section{Proof of Lemma~\ref{lem:genus_submaps}}\label{s.proof_s}

\begin{proof}[Proof of Lemma~\ref{lem:genus_submaps}]
	 Using Euler's  formula~\eqref{eq:Euler} and $v(M\backslash A^c)=v(M)$, $f(M/A)=f(M)$,
	\begin{align}\label{align:total}
	s(M\backslash A^c)\!+\!s(M\!/\!A) & = 2k(M\backslash A^c)\!+\!2k(M\! /\! A)\!-\!v(M\backslash A^c)\!-\!v(M\! /\! A)\!+\!e(M\backslash A^c)\!+\!e(M\! /\! A) \nonumber\\
	& \quad\quad\quad\quad\quad-f(M\backslash A^c)\!-\!f(M\! /\! A)\nonumber\\
	& = s(M)+[k(M\backslash A^c)\!\!-k(M)\!-\!f(M\backslash A^c)\!\!+k(M\! /\! A)]\!
	\nonumber\\
	& \quad\quad\quad\quad\quad +\![k(M\! /\! A)\!-k(M)\!-\!v(M\! /\! A)+\!k(M\backslash A^c)].
	\end{align}
	We now claim that for all $A \subseteq E$,
	\begin{equation}
	k(M\! /\! A)\!-k(M)\!-\!v(M\! /\! A)+\!k(M\backslash A^c)\leq 0.\label{eq:components}
	\end{equation}
	%from which the inequality $k(M\backslash A^c)\!\!-k(M)\!-\!f(M\backslash A^c)\!\!+k(M/A)\leq 0$ follows by duality.
	If (\ref{eq:components}) is true, then 
	\begin{align}\label{align:duaal}
	k(M\backslash A) - k(M) - f(M\backslash A) + k(M/A^c) = \\
	k((M\backslash A)^*) - k(M^*) - v((M\backslash A)^*) + k((M/A^c)^*) = \nonumber\\
	k(M^*/A^*)-k(M^*) - v(M^*/A^*) + k(M^*\backslash (A^*)^c) \leq 0,\nonumber
	\end{align}
	where in the first equality we move to the dual and in the second equality we use Proposition~\ref{p.del_dual_cont}. Complementing $A$ in (\ref{align:duaal}) yields
	\begin{equation}\label{equation:twee}
	k(M\backslash A^c) - k(M) - f(M\backslash A^c) + k(M/A) \leq 0,
	\end{equation}
	so that (\ref{eq:components}) and (\ref{equation:twee}) together with (\ref{align:total}) prove the lemma. \\
	\indent It remains to prove the inequality~\eqref{eq:components}, which it suffices to show for a connected map $M$ as $v(\cdot)$ and $k(\cdot)$ are additive over disjoint unions. Assume then that $M$ is connected. We prove that there exists a partition $A=B_1\cup \cdots \cup B_t$ of the edges in $A$ into non-empty sets with the following property: if we define $A_i := \cup_{j=1}^{i}B_j$, for $1 \leq i \leq t$, and $A_0=\emptyset$, then 
	\begin{equation}\label{eq.cont1}
	k(M\! /\! A_{i})\!-\!v(M\! /\! A_{i})+\!k(M\backslash A_{i}^c)\leq
	k(M\! /\! A_{i-1})\!-\!v(M\! /\! A_{i-1})+\!k(M\backslash A_{i-1}^c)\; ,\end{equation}
	for $1 \leq i \leq t$. The right-hand side of inequality~\eqref{eq.cont1} for $i=1$ is 
	\[k(M\! /\! A_{0})-v(M\! /\! A_{0})+k(M\backslash A_{0}^c)=
	k(M)-v(M)+k(M\backslash E)=k(M),\]
	and so the existence of such a partition proves inequality~\eqref{eq:components}.
	
As a basis for induction we first prove inequality~\eqref{eq.cont1} for $i=1$. 
	Let $B_1:=F\subseteq A$ be a maximal spanning forest of $M\backslash A^c$ (that is, $M\backslash F^c$ contains no cycles and for each $e\in A\backslash F$ the underlying graph of  $M\backslash (F\cup\{e\})^c$ contains a cycle of $M\backslash A^c$). 
	%Since $F$ is a maximal spanning forest of $M\backslash A^c$ 
	We then have $k(M\backslash F^c)=k(M\backslash A^c)$. 
		% and we now need to prove that $k(M/A)-1\leq v(M/A)-k(M\backslash F^c)$. %$k(M/F)-1=\!v(M/F)-\!k(M\backslash A^c)$, 
		Each edge in $F$ is a non-loop of $M$. Contracting a non-loop in $M$ corresponds to its contraction in the underlying graph $\Gamma$, and in particular preserves connectivity of $M$ (the edge is deleted, its endpoints $u$ and $v$ are fused into a single vertex, whose incident edges are the other edges incident with $u$ or $v$).
	We thus have $k(M/F)\!-\!k(M)=1-\!1=0= \!v(M/F)-\!k(M\backslash F^c)$, as each connected component of $M\backslash F^c$ is contracted to a map with a single vertex in~$M/F$. Hence
	\[
	k(M\! /\! F)\!-\!v(M\! /\! F)+\!k(M\backslash F^c)=1=
	k(M\! /\!\; \emptyset)\!-\!v(M\! /\!\; \emptyset)+\!k(M\backslash \emptyset^c),
	\]
	where the latter equality is due to $M$ being a connected map. Thus inequality~\eqref{eq.cont1} holds with equality for $i=1$.
% is settled and we proceed to consider $i > 0$. 

Suppose inductively that we have constructed non-empty sets $B_1,\dots, B_{i}$ and $A_{i}=B_{i}\cup A_{i-1}$ for some $i\geq 1$ such that inequality~\eqref{eq.cont1} holds. We have already verified the base case $i=1$ with $B_1=F=A_1$, and $A_0=\emptyset$. 	

Inequality~\eqref{eq.cont1} with $i+1$ in place of $i$ simplifies as follows. Let $B$ be a non-empty subset of $A\setminus F$ (if $A = F$ then $t=1$ and nothing remains to be proved). Since $k(M\backslash F^c)=k(M\backslash A^c)$ and 
	$k(M\backslash F^c)\geq k(M\backslash (F\cup B)^c)\geq k(M\backslash A^c)$, it follows that 
	$k(M\backslash F^c)= k(M\backslash (F\cup B)^c)$. Therefore $k(M\backslash A_{i}^c) = k(M\backslash A_{i-1}^c)$ for $i \geq 2$. Thus, for $i \geq 1$, inequality~\eqref{eq.cont1} with $i+1$ in place of $i$ now reads as
	\begin{equation}\label{eq.cont}
	k(M\! /\! A_{i+1})\!-\!v(M\! /\! A_{i+1})\leq
	k(M\! /\! A_{i})\!-\!v(M\! /\! A_{i}).\end{equation}
	We show how to define $B_{i+1}$ so that inequality~\eqref{eq.cont} holds with $A_{i+1}=B_{i+1}\cup A_i$. 
%$k(M\! /\! (F\cup B_{2}))\!-\!v(M\! /\! (F\cup B_{2}))\leq	k(M\! /\! F)\!-\!v(M\! /\! F) $. This will serve as a set-up to define all $B_j$\\
%%%%
%We first consider the case $i = 2$ and show how to define $B_2$ so that $k(M\! /\! (F\cup B_{2}))\!-\!v(M\! /\! (F\cup B_{2}))\leq	k(M\! /\! F)\!-\!v(M\! /\! F) $. This will serve as a set-up to define all $B_j$\\

\indent Pick an edge $e\in A\backslash A_i$, which in $M/F$ is a loop, as there is a unique cycle of $M\backslash A^c$ whose edges are contained in $F\cup\{e\}$. Inductively we assume $M/A_i$ also consists of just loops, for we shall see that under all possible choices of set $B_{i+1}$ the map $M/A_{i+1}=(M/A_i)/B_{i+1}$ consists of just loops. (As a base, when $i=1$ we have $A_1=F$ and all edges of $M/A_1$ are loops.)

Let $M/A_i=(\theta,\sigma,\tau;C)$ and $e=\{a,\theta a,\sigma a,\theta\sigma a\}$.

\paragraph{Case $\text{I}$ (twisted):}
Suppose that $e$ is a twisted loop of $M/A_i$. % at a vertex $v$.
	%\footnote{\textcolor{red}{L: has a twisted loop been defined}}.
	Then there is a pair of cycles of $\tau$ of the form %contains $a$ and $\theta a$, while another cycle contains 	$\sigma a$ and $\theta\sigma a$. We write %Then $M\backslash e$
$$\left(\begin{array}{cccc} a & X & \theta a & Y\end{array}\right)\:\left(\begin{array}{cccc} \sigma a & Y^{-1} & \theta\sigma a & X^{-1}\end{array}\right),$$
for some (possibly empty) sequences of crosses $X$ and $Y$.
%	\[
%	\tau_v=(a\; d_1\;\cdots\; d_k\; \theta a \;d_{k+1} \;\cdots \;d_m)(\sigma a \;\sigma d_m\;\cdots \;\sigma d_{k+1}\; \theta\sigma a \;\sigma d_{k}\; \cdots\; \sigma d_1)
%	\]
%	for the two cycles of $\tau$ containing the crosses in $e$. 
\noindent By Observation~\ref{obs:contraction}, %Definition~\ref{d.contraction}, 
the permutation $\tau''$ in $M/(A_i\cup \{e\}) = (\theta'',\sigma'',\tau'';C'')$ has the same cycles as $\tau$,
	except for the above pair of cycles containing crosses in $e$, which are replaced by %in $\tau_v$, which turn into
$$\left(\begin{array}{cc}  X & Y^{-1}\end{array}\right)\:\left(\begin{array}{cc} X^{-1} & Y\end{array}\right).$$
%	
%\[
%	\tau'_{v}=( d_1\; \cdots\; d_k\; \sigma d_{m} \; \cdots\; \sigma d_{k+1})(d_{k+1}\; \cdots \;d_{m}\; \sigma d_{k}\; \cdots\; \sigma d_1).
%	\]
	%
	%v_{M\backslash e}= (d_{a_1} d_{a_2} d_{b_1} d_{b_2}\ldots d_{c_1} d_{c_2}
	%d_{j_2} d_{j_1} \ldots d_{i_2} d_{i_1})
	%\]
	
	\noindent In particular we have that
	\begin{itemize}
		\item $v(M/ (A_i\cup \{e\}))=v(M/A_i)$, as the number of cycles in $\tau$ and $\tau''$ is the same (some cycles in the permutation may be empty),
		%\item $f(M/ \{F\cup e\}) = f(M/F)$, as the number of faces is maintained under contraction (Theorem~\ref{l.param_edge_cont_maint})\footnote{This requires a proof.}
		
		\item $k(M/(A_i\cup \{e\}))=k(M/A_i)$, as the connectivity between vertices in $M/A_i$ is preserved upon contracting $e$ (the loop $e$ is irrelevant for connectivity between vertices in $M/A_i$).
		%\item $e(M/ \{F\cup e\})=e(M/F)-1$ as all the edges are preserved except $e$
	\end{itemize}

	\noindent Thus, if $e$ is a twisted loop of $M/A_i$, setting $B_{i+1} := \{e\}$, $A_{i+1}=A_i\cup\{e\}$, gives equality in~\eqref{eq.cont}.

\paragraph{Case $\text{II}$ (non-twisted):}
	If $e$ is a non-twisted loop in $M/A_i$, then, keeping the same notation, the cycles of $\tau$ containing crosses of $e$ are of the form
$$\left(\begin{array}{cccc} a & X & \theta\sigma a & Y\end{array}\right)\:\left(\begin{array}{cccc} \sigma a & Y^{-1} & \theta a & X^{-1}\end{array}\right).$$
%	\[
%	\tau_v=(a \;d_1\;\cdots\; d_k \;\theta\sigma a\; d_{k+1}\;\cdots\; d_{m})(\sigma a \;\sigma d_m\;\cdots \;\sigma d_{k+1}\; \theta a\; \sigma d_{k}\;\cdots\; \sigma d_{1}).
%	\]

	%
	%
	%\[
	%v_M= (d_1 d_2 d_{a_1} d_{a_2} d_{b_1} d_{b_2}\ldots d_{c_1} d_{c_2} d_4 d_3 d_{i_1} d_{i_2} \ldots d_{j_1} d_{j_2}),
	%\]
\noindent  By Observation~\ref{obs:contraction}, %Definition~\ref{d.contraction}, 
the permutation $\tau''$ in $M/(A_i\cup \{e\}) = (\theta'',\sigma'',\tau'';C'')$ has the same cycles as $\tau$,
	except for the above pair of cycles containing crosses in $e$, which are replaced by  the two pairs of cycles 
$$\left(\begin{array}{c}  X \end{array}\right)\:\left(\begin{array}{c} X^{-1}\end{array}\right)\quad\mbox{ and }\quad \left(\begin{array}{c}  Y\end{array}\right)\:\left(\begin{array}{c} Y^{-1} \end{array}\right),$$
corresponding to the two vertices $u$ and $v$ into which the vertex incident with $e$ is split upon contracting $e$. 
%	\noindent Contraction of $e$ splits the vertex $v$ splits into two vertices, $v_1$ and $v_2$, in $M/(A_i\cup\{e\})$, . The permutations corresponding to these vertices are given by 
%	\[
%	\tau'_{v_1}=(d_1 \; \cdots d_{k})(\sigma d_k\;\cdots \; \sigma d_1) \text{ and } \tau'_{v_2}=(d_{k+1} \; \cdots d_{m})(\sigma d_m\;\cdots \; \sigma d_{k+1})
%	\]
%	(some of these permutations may be empty). 
Now there are three possibilities:

	\begin{description}
		\item[Case $\text{II.i}$:] The vertices $u$ and $v$ are not connected in $M/(A_i\cup \{e\})$.
		Then $v(M/(A_i\cup \{e\}))=v(M/A_i)+1$ and $k(M/(A_i\cup \{e\}))=k(M/A_i)+1$, so that
			\[
		k(M/(A_i\cup \{e\}))-v(M/(A_i\cup \{e\}))=k(M/A_i)-v(M/A_i).
		\]
	\item[Case $\text{II.ii}$:]  There is an edge $e'\in A\setminus (A_i\cup \{e\})$ with endpoints $u$ and $v$. Then
		$v(M/(A_i\cup \{e,e'\}))=v(M/(A_i\cup \{e\}))-1=v(M/A_i)$ while 
		$k(M/(A_i\cup \{e,e'\}))=k(M/(A_i\cup \{e\}))=k(M/A_i)$. Hence
		\[
		k(M/(A_i\cup \{e,e'\}))-v(M/(A_i\cup \{e,e'\}))=k(M/A_i)-v(M/A_i).
		\]
%		
%		Hence taking $B_i:=\{e,e'\}$, 
%		
%		
%		
%		
%		
%		In this case $k(M/(F\cup\{e,e'\})\!-\!1\leq \!v(M/(F\cup\{e,e'\}))-\!k(M\backslash F^c)$, and again the desired inequality holds. 
	\item[Case $\text{II.iii}$:] The vertices $u$ and $v$ are connected in $M/(A_i\cup\{e\})$ but they are not the endpoints of an edge in $A\setminus (A_i\cup \{e\})$. Then $v(M/(A_i\cup \{e\}))=v(M/A_i)+1$ while $k(M/A_i)=k(M/(A_i\cup\{e\}))$. Hence
	\[
	k(M/(A_i\cup \{e\}))-v(M/(A_i\cup \{e\}))<k(M/A_i)-v(M/A_i).
	\]
\end{description}
Therefore, if we define
\[
B_{i+1} := \begin{cases} \{e\} \hspace{6.5mm} \text{ in Case $\text{I}$, Case $\text{II.i}$ and Case $\text{II.iii}$,}\\  \{e,e'\} \hspace{2mm} \text{ in Case $\text{II.ii}$},\end{cases}
\]
then inequality~\eqref{eq.cont} is satisfied. %If there remain edges in $A\backslash A_{i+1}$ then we repeat the process. 

Finally, we verify that all the edges of $M/A_{i+1}$ are loops. 
All loops in $M/A_i$ remain loops in $M/A_{i+1}$ that are not incident with the same vertex as the loop $e\in A\backslash A_i$ picked at the outset. 
A loop on the same vertex as $e$ remains a loop in  $M/A_{i+1}$ in Case $\text{I}$, Case $\text{II.i}$ and Case $\text{II.iii}$ as here $B_{i+1}=\{e\}$ and contraction of $e$ to obtain $M/A_{i+1}$ from $M/A_i$ does not create any non-loop edges. 
In Case II.ii the non-loop edge $e'$ produced upon contracting $e$ is added to $B_{i+1}$, so that $M/A_{i+1}$ again consists of just loops. %,  the contraction of the non-loop $e'$ not leaving any non-loops.

%Let $M/A_i = (\rho,\sigma,\tau;C)$ and $e = \{a, \rho a,\sigma a,\rho\sigma a\}$. 

By induction the inequality~\eqref{eq.cont1}  is satisfied for all $1\leq i\leq t$, where $A_t=A$.
\end{proof} % for some $t\leq e(M)-v(M)+k(M)$. 

\section{Enumerating flows on standard bouquets}\label{sec:proof of flow count}

While several proofs of Theorem~\ref{thm:flows standard bouquets} have appeared~\cite{frob96, jones95, mednykh78}, the proof that follows is new and elementary, requiring only basic representation theory. We begin by collecting together from the first twenty pages of Serre's book~\cite{serre77} the relevant facts needed for our proof. 

Let $G$ be a finite group and let $\chi_{\text{reg}}$ denote the character of the \textit{regular representation} of $G$. This satisfies the identity
\begin{equation}\label{equation:reg1}
\chi_{\text{reg}}(g) = \begin{cases} |G| \hspace{4mm}\text{ if } g=1, \\ 0 \hspace{7mm}\text{ otherwise}.\end{cases}
\end{equation}
Let $\widehat{G}$ denote the set of all irreducible representations of $G$ up to equivalence. Then
\begin{equation}\label{equation:reg2}
\chi_{\text{reg}} = \sum_{\rho \in \widehat{G}}n_{\rho}\chi_{\rho},
\end{equation}
where $n_{\rho}$ is the dimension of the irreducible representation $\rho$ with character $\chi_{\rho}$. 

%The characters of irreducible representations satisfy an orthogonality relation that we will now describe. 
Without loss of generality we may assume that each  $\rho\in\widehat{G}$ is unitary, that is,  $\rho(g)$ is a unitary matrix for each $g \in G$. Let $\rho, \rho' \in \widehat{G}$. Then for $1 \leq i,j \leq n_{\rho}$ and $1 \leq i',j' \leq n_{\rho'}$ we have the orthogonality relation
\begin{equation}\label{equation:orthorel}
\sum_{g \in G}\rho(g)_{i,j}\rho'(g^{-1})_{j',i'} = \begin{cases} \frac{|G|}{n_{\rho}} \hspace{3mm}\text{ if } \rho = \rho', i = i' \text{ and } j=j', \\ 0 \hspace{6mm}\text{ otherwise}.\end{cases}
\end{equation}

The \textit{contragredient representation} (or dual representation) $\rho^*$ of an irreducible representation $\rho: G \rightarrow \text{GL}(V)$ is defined on the dual space $V^*$ of $V$ by 
\begin{equation}\label{equation:duale}
\rho^*(g) := \rho(g^{-1})^T.
\end{equation} %estic aqui
If $V$ is self-dual via a $G$-invariant symmetric bilinear form %, which is equivalent to $\mathcal F(\rho)=1$ (see Theorem \ref{theorem:fs}), 
then 
this form can be chosen so that $\rho^*(g) = \rho(g)$ for all $g \in G$. If $V$ is self-dual via a $G$-invariant skew-symmetric bilinear form then this form can (and will be) chosen so that 
\begin{equation}\label{equation:dual}
\rho^*(g)_{i,j} = (-1)^{\alpha(i,j)}\rho(g)_{i+n_{\rho}/2,j+n_{\rho}/2} \hspace{4mm} \forall i,j \in [n_{\rho}],
\end{equation}
where the indices are taken modulo $n_{\rho}$ and where
\begin{equation}\label{equation:alpha}
\alpha(i,j) = \begin{cases} 1 \hspace{5mm}\text{ if } |i-j| \geq n_{\rho}/2, \\ 0 \hspace{6mm} \text{otherwise}.\end{cases}
\end{equation}

\noindent Note that if $V$ is isomorphic to $V^*$ via a $G$-invariant skew-symmetric bilinear form then the dimension $n_{\rho}$ of $V$ must be even.

%Next, we recall the definition of the \emph{Frobenius-Schur indicator}, that has been given in 
Recall from equation~\eqref{fsindicator} that for
a representation $\rho$ with character $\chi_{\rho}$ the Frobenius-Schur indicator $\mathcal{F}(\rho)$ is defined by
%\begin{equation}
$\mathcal{F}(\rho) = \frac{1}{|G|}\sum_{g \in G}\chi_{\rho}(g^2)$.
%\end{equation}
\begin{theorem}[Frobenius, Schur \cite{frob06}]\label{theorem:fs}
	If $\rho: G \rightarrow \mathrm{GL}(V)$ is an irreducible representation then
	\[
	\mathcal{F}(\rho) = \begin{cases} -1 \hspace{2mm}\text{ if and only if $V$ has a non-zero $G$-invariant skew-symmetric bilinear form,} \\ 0 \hspace{5mm}\text{ if and only if $V$ has no non-zero $G$-invariant bilinear form}, \\ 1 \hspace{5mm}\text{ if and only if $V$ has a non-zero $G$-invariant symmetric bilinear form.}\end{cases}
	\]	
\end{theorem}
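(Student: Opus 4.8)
The plan is to identify $\mathcal{F}(\rho)$ with the difference between the dimensions of the spaces of $G$-invariant symmetric and $G$-invariant skew-symmetric bilinear forms on $V$, and then to squeeze this difference into $\{-1,0,1\}$ using Schur's lemma. A bilinear form on $V$ is the same thing as a vector in $V^*\otimes V^*\cong\mathrm{Hom}(V,V^*)$, and it is $G$-invariant precisely when it is a $G$-fixed vector; hence the space of all $G$-invariant bilinear forms on $V$ has dimension $\dim\mathrm{Hom}_G(V,V^*)$, which by Schur's lemma (already contained in the orthogonality relations~\eqref{equation:orthorel}) equals $1$ if $V\cong V^*$ and $0$ otherwise, since $V$ and $V^*$ are both irreducible.

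Next I would decompose $V^*\otimes V^*=\mathrm{Sym}^2(V^*)\oplus\wedge^2(V^*)$ as $G$-representations, under which the $G$-invariant symmetric forms are precisely the fixed subspace $(\mathrm{Sym}^2 V^*)^G$ and the $G$-invariant skew-symmetric forms (= alternating, since $\mathrm{char}\,\mathbb{C}\neq 2$) are precisely $(\wedge^2 V^*)^G$. Write $a:=\dim(\mathrm{Sym}^2 V^*)^G$ and $b:=\dim(\wedge^2 V^*)^G$, so that $a+b\in\{0,1\}$ by the previous paragraph. The key computation is $a-b$: using the classical character identities $\chi_{\mathrm{Sym}^2 W}(g)=\tfrac12\big(\chi_W(g)^2+\chi_W(g^2)\big)$ and $\chi_{\wedge^2 W}(g)=\tfrac12\big(\chi_W(g)^2-\chi_W(g^2)\big)$, together with $\dim W^G=\tfrac1{|G|}\sum_{g\in G}\chi_W(g)$, and taking $W=V^*$ so that $\chi_W(g)=\chi_V(g^{-1})$, one obtains
\[
a-b=\frac1{|G|}\sum_{g\in G}\chi_{V^*}(g^2)=\frac1{|G|}\sum_{g\in G}\chi_V(g^{-2})=\frac1{|G|}\sum_{h\in G}\chi_V(h^2)=\mathcal{F}(\rho),
\]
where the third equality substitutes $h=g^{-1}$; in particular $\mathcal{F}(\rho)$ is real.

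Combining the two facts, $a$ and $b$ are non-negative integers with $a+b\leq 1$ and $a-b=\mathcal{F}(\rho)$, which forces $(a,b)\in\{(0,0),(1,0),(0,1)\}$. This gives $\mathcal{F}(\rho)\in\{-1,0,1\}$ and all three stated equivalences simultaneously: $\mathcal{F}(\rho)=1\iff a=1$ (hence $b=0$) $\iff V$ carries a non-zero $G$-invariant symmetric bilinear form; $\mathcal{F}(\rho)=-1\iff b=1\iff V$ carries a non-zero $G$-invariant skew-symmetric bilinear form; and $\mathcal{F}(\rho)=0\iff a=b=0\iff V$ carries no non-zero $G$-invariant bilinear form at all, here using that the symmetric and skew-symmetric parts of a $G$-invariant form are themselves $G$-invariant.

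The only non-bookkeeping inputs are Schur's lemma and the character formulas for $\mathrm{Sym}^2$ and $\wedge^2$, the latter of which I would justify by a short eigenvalue count: diagonalise $\rho(g)$ on $V$ with eigenvalues $\lambda_i$ and compare the products $\lambda_i\lambda_j$ with $i\leq j$ (an eigenbasis of $\mathrm{Sym}^2$) against those with $i<j$ (for $\wedge^2$), using that the trace of the coordinate-swap involution composed with $\rho(g)\otimes\rho(g)$ equals $\mathrm{tr}\big(\rho(g)^2\big)=\chi_V(g^2)$. The one subtlety worth flagging is that bilinear forms naturally live in $V^*\otimes V^*$ rather than $V\otimes V$; this is harmless because $\mathcal{F}(\rho)=\mathcal{F}(\rho^*)$ and a non-zero $G$-invariant form can exist only in the self-dual case $V\cong V^*$, where it does not matter. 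Conceptually the entire argument is the observation that a $G$-invariant bilinear form is a copy of the trivial representation inside $V^*\otimes V^*$, whose symmetry type is detected by the $\pm1$-eigenspaces of the swap involution, the relevant trace being exactly the summand $\chi_\rho(g^2)$ appearing in~\eqref{fsindicator}.
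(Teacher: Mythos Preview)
The paper does not actually prove Theorem~\ref{theorem:fs}; it is stated with attribution to Frobenius and Schur and the reader is pointed to \cite[Theorem~23.16]{james01} for a modern treatment. So there is no in-paper proof to compare against.

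Your argument is correct and is precisely the standard textbook proof (the one in James--Liebeck, for instance): identify $G$-invariant bilinear forms with $(V^*\otimes V^*)^G$, use Schur's lemma to get $a+b=\dim\mathrm{Hom}_G(V,V^*)\in\{0,1\}$, and compute $a-b=\mathcal{F}(\rho)$ from the character identities for $\mathrm{Sym}^2$ and $\wedge^2$. The handling of $V^*$ versus $V$ via the substitution $h=g^{-1}$ is clean, and your closing remark that only the self-dual case matters (where $V\cong V^*$ makes the distinction moot) correctly disposes of that subtlety. Nothing needs fixing.
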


For a more modern account of the above result, see Theorem $23.16$ in \cite{james01}.

\begin{lemma}\label{lemma:fsrep}
Let $\rho \in \widehat{G}$. Then
\begin{align*}
\sum_{g \in G}\rho(g)\rho(g) = \frac{\FF(\rho)|G|}{n_{\rho}}I_{n_{\rho}},
\end{align*}
where $I_{n_{\rho}}$ is the identity matrix of size $n_{\rho}$.
\end{lemma}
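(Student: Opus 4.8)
The claim is that $\sum_{g\in G}\rho(g)\otimes\rho(g)$, read as a single matrix sum $\sum_g \rho(g)\rho(g)$ of products in $\mathrm{GL}_{n_\rho}(\C)$, equals $\FF(\rho)|G|/n_\rho$ times the identity. My plan is to compute the $(i,k)$-entry of the left-hand matrix directly and recognize the answer via the orthogonality relation~\eqref{equation:orthorel}, splitting into the three cases $\FF(\rho)\in\{0,1,-1\}$ governed by Theorem~\ref{theorem:fs}. Concretely, $\bigl(\sum_g \rho(g)\rho(g)\bigr)_{i,k}=\sum_{g}\sum_j \rho(g)_{i,j}\rho(g)_{j,k}$, so the point is to turn $\rho(g)_{j,k}$ into something of the form $\rho'(g^{-1})_{\bullet,\bullet}$ to apply~\eqref{equation:orthorel} with $\rho'=\rho^*$.

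First I would handle the self-dual symmetric case $\FF(\rho)=1$. Here the $G$-invariant symmetric bilinear form lets us choose a basis in which $\rho^*(g)=\rho(g)$ for all $g$; but $\rho^*(g)=\rho(g^{-1})^T$ by~\eqref{equation:duale}, so $\rho(g)_{j,k}=\rho(g^{-1})_{k,j}$. Substituting, $\bigl(\sum_g\rho(g)\rho(g)\bigr)_{i,k}=\sum_j\sum_g \rho(g)_{i,j}\rho(g^{-1})_{k,j}$, and~\eqref{equation:orthorel} (with $\rho=\rho'$, matching first indices $i=k$ and second indices $j=j$) gives $\sum_g\rho(g)_{i,j}\rho(g^{-1})_{k,j}=\frac{|G|}{n_\rho}[i=k]$; summing over the single surviving $j$ (namely $j$ arbitrary but the Kronecker constraints force $i=k$ and the sum over $j$ of $\frac{|G|}{n_\rho}[i=k][\text{one term}]$ — more carefully, the relation yields a nonzero contribution only when $j=j$ which is automatic and $i=k$, and then exactly one value pairs up) produces $\frac{|G|}{n_\rho}[i=k]$. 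The dual (skew) case $\FF(\rho)=-1$ runs the same way but uses~\eqref{equation:dual}: $\rho(g)_{j,k}=\rho^*(g)_{j-n_\rho/2,k-n_\rho/2}\cdot(-1)^{\alpha(\cdot,\cdot)}$ rewritten via $\rho^*(g)=\rho(g^{-1})^T$, and the sign factors $(-1)^{\alpha}$ are what flip the final answer from $+\frac{|G|}{n_\rho}I$ to $-\frac{|G|}{n_\rho}I$; I would track the index shifts by $n_\rho/2$ and the parity carefully, using that the two $\alpha$-signs combine to a net $-1$ on the diagonal after the orthogonality relation collapses the sum. Finally, when $\FF(\rho)=0$, the representation $\rho$ is not isomorphic to $\rho^*$, so in~\eqref{equation:orthorel} we are in the "otherwise" branch ($\rho\neq\rho'=\rho^*$) and every term vanishes, giving the zero matrix $=0\cdot I$, as claimed.

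A cleaner alternative I would mention (and perhaps prefer) is to sidestep the case analysis at the entry level by noting that $M:=\sum_g\rho(g)\rho(g)$ satisfies $\rho(h)M\rho(h)^{-1}=\sum_g\rho(hg)\rho(hg)\rho(h)^{-2}\cdot\rho(h)^2\cdots$ — more precisely one checks $\rho(h)M=\sum_g\rho(hg)\rho(g)=\sum_g\rho(g)\rho(h^{-1}g)=M\rho(h)^{-1}$ after reindexing, hence $\rho(h)M\rho(h)=M$ for all $h$; then $M$ intertwines $\rho$ with $\rho^*$ (viewing $\rho(h)M\rho(h)=M$ as $\rho(h)M=M\rho(h^{-1})^{T}$ up to transpose bookkeeping), so by Schur's lemma $M=0$ unless $\rho\cong\rho^*$, and when $\rho\cong\rho^*$ it is a scalar whose value is pinned down by taking the trace: $\operatorname{tr}M=\sum_g\operatorname{tr}\rho(g)^2=\sum_g\chi_\rho(g^2)=\FF(\rho)|G|$, so $M=\frac{\FF(\rho)|G|}{n_\rho}I_{n_\rho}$. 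The main obstacle is the careful bookkeeping in the skew-symmetric case — getting the index shifts modulo $n_\rho$ and the two occurrences of the sign $(-1)^{\alpha(i,j)}$ to combine correctly so that the diagonal entries come out negative — but the trace/Schur argument avoids this entirely and is the route I would ultimately write up, invoking Theorem~\ref{theorem:fs} only implicitly through the definition of $\FF$.
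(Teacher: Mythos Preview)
Your first, entrywise approach --- rewriting $\rho(g)_{j,k}$ via $\rho^*$ and then splitting into the three cases $\FF(\rho)\in\{0,1,-1\}$ using Theorem~\ref{theorem:fs} and the orthogonality relations --- is exactly what the paper does.

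Your alternative Schur--trace route is genuinely different and, once fixed, cleaner than the paper's case analysis; but the intertwining step as written is wrong. From
\[
\rho(h)M=\sum_g\rho(hg)\rho(g)=\sum_k\rho(k)\rho(h^{-1}k)=\sum_k\rho(k)\,\rho(h^{-1})\,\rho(k)
\]
you cannot conclude $M\rho(h)^{-1}$, because $\rho(h^{-1})$ is trapped \emph{between} the two factors $\rho(k)$ rather than sitting on the right. The correct (and simpler) observation is that $M$ actually commutes with every $\rho(h)$: computing $M\rho(h)=\sum_g\rho(g)\rho(gh)$ and substituting $k=gh$ yields the very same expression $\sum_k\rho(k)\rho(h^{-1})\rho(k)$. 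Schur's lemma then applies directly to the irreducible $\rho$ (no appeal to $\rho^*$ needed), giving $M=cI_{n_\rho}$, and your trace computation $\operatorname{tr}M=\sum_g\chi_\rho(g^2)=\FF(\rho)\,|G|$ pins down $c=\FF(\rho)|G|/n_\rho$. With this correction the argument is shorter than the paper's: it uses only Schur's lemma and the definition of $\FF$, and avoids the skew-symmetric index bookkeeping of~\eqref{equation:dual}--\eqref{equation:alpha} entirely.
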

\begin{proof}
By equation~\eqref{equation:duale} the $(i,j)$-entry of the following sum of $n_{\rho}\times n_{\rho}$-matrices is calculated to be
\begin{equation}\label{equation:ij}
\Big(\sum_{g \in G}\rho(g)\rho(g)\Big)_{i,j} = \sum_{t = 1}^{n_{\rho}}\sum_{g}(\rho^*(g^{-1})^T)_{i,t}\rho(g)_{t,j}.\end{equation}
We calculate the expression on the right-hand side of equation (\ref{equation:ij}) by considering the possible values of $\mathcal{F}(\rho)$. 

If $\mathcal{F}(\rho)=0$ then by Theorem \ref{theorem:fs} $V$ and $V^*$ are not isomorphic. Hence $\rho$ and $\rho^*$ are not equivalent and the orthogonality relation~\eqref{equation:orthorel} implies that equation~\eqref{equation:ij} is equal to zero in this case. \\
\indent If $\mathcal{F}(\rho) = 1$ then the same orthogonality relation yields
\[
\sum_{t}\sum_{g}\rho(g^{-1})_{t,i}\rho(g)_{t,j} = \begin{cases} \frac{|G|}{n_{\rho}} \hspace{4mm} \text{ if } i = j,\\ 0 \hspace{8.5mm} \text{otherwise,}\end{cases}
\]
proving the assertion for such $\rho$.\\
\indent Assume now that $\mathcal{F}(\rho)=-1$. Then we calculate that
\begin{align}\label{equation:skewsymm}
\sum_{t}\sum_{g}(\rho^*(g^{-1})^T)_{i,t}\rho(g)_{t,j} &= \sum_{t}\sum_{g}(-1)^{\alpha(i,t)}\rho(g^{-1})_{t+n_{\rho}/2,i+n_{\rho}/2}\rho(g)_{t,j}\\
&= \begin{cases} \frac{-|G|}{n_{\rho}} \hspace{4mm} \text{ if } i = j,\\ 0 \hspace{11mm} \text{otherwise.}\end{cases}\nonumber
\end{align}
Equation (\ref{equation:dual}) is used in the first equality. The orthogonality relation (\ref{equation:orthorel}) forces the index $t$ in the summation in (\ref{equation:skewsymm}) to equal both $j+n_{\rho}/2$ and $i+n_{\rho}/2$ in order to obtain a non-zero contribution to the sum. In that case $i = j$ and then $\alpha(i,t) = \alpha(i,i+n_{\rho}/2) = 1$ (for all $i$) by equation~\eqref{equation:alpha}.
\end{proof}

We now have all the results we need in order to prove Theorem~\ref{thm:flows standard bouquets}. 
\begin{proof}[Proof of Theorem~\ref{thm:flows standard bouquets}]
We first assume that $M$ is non-orientable. 
Writing $M=(\theta,\sigma,\tau;C)$, with $\tau$ as in \eqref{eq:non-orientable sb}, we see that we obtain the single tension equation 
\[
h_1^2\cdots h_g^2h_{g+1}h_{g+1}^{-1}\cdots h_{n^*}h_{n^*}^{-1}=1
\]
with $h_i\in G$, for $i=1,\ldots,n^*$.
Note that we can choose any group element for $h_{i}$ with $g+1 \leq i \leq n^*$ as they do not contribute to the tension condition. This accounts for a factor $|G|^{n^*-g}$. 
We then write
\[
p_G^1(M) = |G|^{n^*-g}\sum_{h_1,\dots,h_g\in G}\bold{1}\left(\prod_{i=1}^{g}h_i^2\right),
\]
where $\bold{1}(h)$ is one if and only if $h = 1$ and zero otherwise. 
By equations (\ref{equation:reg1}) and (\ref{equation:reg2}) we have 
\begin{align}\label{equation:compu}
p_G^1(M) &\stackrel{\eqref{equation:reg1}}{=} |G|^{n^*-g-1}\sum_{h_1,\dots,h_g \in G}\chi_{\text{reg}}(h_1^2\cdots h_g^2)\nonumber\\
&\stackrel{\eqref{equation:reg2}}{=} |G|^{n^*-g-1}\sum_{h_1,\dots,h_g \in G}\sum_{\rho \in \widehat{G}}n_{\rho}\chi_{\rho}(h_1^2\cdots h_g^2).
\end{align}
With the aid of Lemma \ref{lemma:fsrep} (in the fourth equality below) we see that
\begin{align*}
p_G^1(M) &=|G|^{n^*-g-1}\sum_{\rho \in \widehat{G}}n_{\rho}\sum_{h_1,\dots,h_g \in G}\chi_{\rho}(h_1^2\cdots h_g^2) \\
&= |G|^{n^*-g-1}\sum_{\rho}n_{\rho}\sum_{h_1,\dots,h_g}\text{tr}(\rho(h_1)\rho(h_1)\cdots\rho(h_g)\rho(h_g))\\
&= |G|^{n^*-g-1}\sum_{\rho}n_{\rho}\text{tr}\Big(\big(\sum_{h_1}\rho(h_1)\rho(h_1)\big)\cdots\big(\sum_{h_g}\rho(h_g)\rho(h_g)\big)\Big)\\
&=|G|^{n^*-g-1}\sum_{\rho}n_{\rho}\text{tr}\Big(\big(\frac{\mathcal{F}(\rho)|G|}{n_{\rho}}I_{n_{\rho}}\big)^{g}\Big)\\
&= |G|^{n^*-1}\sum_{\rho \in \widehat{G}}\FF(\rho)\matchsupheight^gn_{\rho}^{2-g},
\end{align*}
which finishes the proof for non-orientable standard bouquets.

In case $M$ is orientable, a similar but easier computation gives Theorem~\ref{thm:flows standard bouquets} for that case.
(See~\cite{goodall16,litjens17} for details.)
\end{proof}

 \bibliographystyle{abbrv} 
 \bibliography{surfaceTutte}

\begin{thebibliography}{10}

\bibitem{bollobas01}
B.~Bollob\'as and O.~Riordan.
\newblock A polynomial invariant of graphs on orientable surfaces.
\newblock {\em Proc. London Math. Soc. (3)}, 83(3):513--531, 2001.

\bibitem{bollobas02}
B.~Bollob{\'a}s and O.~Riordan.
\newblock A polynomial of graphs on surfaces.
\newblock {\em Math. Ann.}, 323(1):81--96, 2002.

\bibitem{bouchet83}
A.~Bouchet.
\newblock Nowhere-zero integral flows on a bidirected graph.
\newblock {\em J. Combin. Theory Ser. B}, 34(3):279--292, 1983.

\bibitem{B89}
A.~Bouchet.
\newblock Maps and {$\triangle$}-matroids.
\newblock {\em Discrete Math.}, 78(1-2):59--71, 1989.

\bibitem{butler12}
C.~Butler.
\newblock A quasi-tree expansion of the {K}rushkal polynomial.
\newblock {\em Adv. Appl. Math.}, 2016.

\bibitem{CMNR16}
C.~Chun, I.~Moffatt, S.~Noble, and R.~Rueckeriemen.
\newblock Matroids, delta-matroids and embedded graphs.
\newblock arXiv:1403.0920 [math.CO], 2016.

\bibitem{damlien03}
G.~Damiand and P.~Lienhardt.
\newblock Removal and contraction for n-dimensional generalized maps.
\newblock In I.~Nystr\"om, G.~Sanniti~di Baja, and S.~Svensson, editors, {\em
  Discrete {G}eometry for {C}omputer {I}magery}, volume 2886 of {\em LNCS},
  pages 408--419, 2003.

\bibitem{devos00}
M.~DeVos.
\newblock Flows on graphs.
\newblock {\em PhD thesis, Princeton Univ.}, 2000.

\bibitem{devos17}
M.~DeVos, E.~Rollov{\'a}, and R.~{\v{S}}{\'a}mal.
\newblock A note on counting flows in signed graphs.
\newblock {\em arXiv preprint arXiv:1701.07369}, 2017.

\bibitem{EMM13}
J.~Ellis-Monaghan and I.~Moffatt.
\newblock {\em Graphs on {S}urfaces: {D}ualities, {P}olynomials, and {K}nots},
  volume~84.
\newblock Springer, 2013.

\bibitem{frob96}
G.~Frobenius.
\newblock {\"U}ber {G}ruppencharaktere.
\newblock {\em Sitzungsber. K\"on. Preuss. Akad. Wiss. Berlin}, pages
  985--1021, 1896.

\bibitem{frob06}
G.~Frobenius and I.~Schur.
\newblock {\"U}ber die reellen {D}arstellungen der endlichen {G}ruppen;
  {S}itzungsberichte {K}{\"o}nigl.
\newblock {\em Sitzungsber. K\"on. Preuss. Akad. Wiss. Berlin}, 186:208, 1906.

\bibitem{gaxu13}
J.~Gallier and X.~Xu.
\newblock {\em A {G}uide to the {C}lassification {T}heorem for {C}ompact
  {S}urfaces}.
\newblock Springer Science \& Business Media, 2013.

\bibitem{goodall16}
A.~Goodall, T.~Krajewski, G.~Regts, and L.~Vena.
\newblock A {T}utte polynomial for maps.
\newblock {\em arXiv preprint arXiv:1610.04486, to appear in Combinatorics
  Probability and Computing}, 2016.

\bibitem{GLRV18+}
A.~Goodall, B.~Litjens, G.~Regts, and L.~Vena.
\newblock A new {T}utte polynomial for signed graphs.
\newblock In preparation.

\bibitem{goodall2017tutte}
A.~Goodall, B.~Litjens, G.~Regts, and L.~Vena.
\newblock A tutte polynomial for non-orientable maps.
\newblock {\em Electronic Notes in Discrete Mathematics}, 61:513--519, 2017.

\bibitem{GJ97}
I.~P. Goulden and D.~M. Jackson.
\newblock Maps in locally orientable surfaces and integrals over real symmetric
  surfaces.
\newblock {\em Canad. J. Math.}, 49(5):865--882, 1997.

\bibitem{hall36}
P.~Hall.
\newblock The {E}ulerian functions of a group.
\newblock {\em Quart. J. Math}, 7:134--151, 1936.

\bibitem{HM18}
S.~Huggett and I.~Moffatt.
\newblock Embedded graphs and their {T}utte polynomials.
\newblock {\em Preprint}, 2018.

\bibitem{isaacs94}
M.~Isaacs.
\newblock {\em Character theory of finite groups}, volume~69.
\newblock Courier Corporation, 1994.

\bibitem{james01}
G.~James and M.~Liebeck.
\newblock {\em Representations and {C}haracters of {G}roups}.
\newblock Cambridge University Press, 2001.

\bibitem{jones95}
G.~Jones.
\newblock Enumeration of homomorphisms and surface-coverings.
\newblock {\em Quart. J. Math}, 46(4):485--507, 1995.

\bibitem{kauf89}
L.~Kauffman.
\newblock A {T}utte polynomial for signed graphs.
\newblock {\em Discrete Appl. Math.}, 25(1):105--127, 1989.

\bibitem{kriocon13}
A.~Krieger and B.~O'Connor.
\newblock Tutte polynomial of signed graphs and its categorification,
  {\tt\url{https://people.math.osu.edu/chmutov.1/wor-gr-su13/pres.pdf}}.

\bibitem{krushkal11}
V.~Krushkal.
\newblock Graphs, links, and duality on surfaces.
\newblock {\em Combin. Probab. Comput.}, 20(02):267--287, 2011.

\bibitem{vergnas80}
M.~Las~Vergnas.
\newblock On the {T}utte polynomial of a morphism of matroids.
\newblock {\em Ann. Discrete Math.}, pages 7--20, 1980.

\bibitem{litjens17}
B.~Litjens and B.~Sevenster.
\newblock Partition functions and a generalized coloring-flow duality for
  embedded graphs.
\newblock {\em J. Graph Theory, DOI:10.1002/jgt.22210}, 2017.
\newblock To appear.

\bibitem{mednykh78}
A.~Mednykh.
\newblock Determination of the number of nonequivalent coverings over a compact
  riemann surface.
\newblock {\em Soviet Math. Dokl.}, 19(2):318--320 (English transl.), 1978.

\bibitem{mohar01}
B.~Mohar and C.~Thomassen.
\newblock {\em Graphs on {S}urfaces}.
\newblock J. Hopkins Univ. Press, 2001.

\bibitem{NW99}
S.~D. Noble and D.~J.~A. Welsh.
\newblock A weighted graph polynomial from chromatic invariants of knots.
\newblock {\em Ann. Inst. Fourier (Grenoble)}, 49(3):1057--1087, 1999.
\newblock Symposium {\`a} la M{\'e}moire de Fran{\c{c}}ois Jaeger (Grenoble,
  1998).

\bibitem{serre77}
J.-P. Serre.
\newblock {\em Linear {R}epresentations of {F}inite {G}roups}, volume~42.
\newblock Springer Science \& Business Media, 2012.

\bibitem{tutte47}
W.~T. Tutte.
\newblock A ring in graph theory.
\newblock {\em Proc. Cambridge Philos. Soc.}, 43:26--40, 1947.

\bibitem{tutte49}
W.~T. Tutte.
\newblock On the imbedding of linear graphs in surfaces.
\newblock {\em Proc. London Math. Soc. (2)}, 51:474--483, 1949.

\bibitem{tutte54}
W.~T. Tutte.
\newblock A contribution to the theory of chromatic polynomials.
\newblock {\em Canad. J. Math}, 6(80-91):3--4, 1954.

\bibitem{tutte01}
W.~T. Tutte.
\newblock {\em Graph theory}, volume~21 of {\em Encyclopedia of Mathematics and
  its Applications}.
\newblock Cambridge University Press, Cambridge, 2001.

\bibitem{WS92}
T.~Wang and H.~Sachs.
\newblock A contribution to the theory of {T}utte's {$V$}- and {$W$}-function.
\newblock {\em Discrete Math.}, 104(3):281--292, 1992.

\end{thebibliography}

\end{document}